\documentclass[11pt,twoside]{article}
\usepackage{times}
\usepackage{amsmath,amssymb,amsthm}
\usepackage{enumerate}
\usepackage{cite}
\usepackage{mathrsfs,graphicx}


\pagestyle{myheadings}
\markboth{Compressible Euler equations with time-depending
  damping}{F.~Hou and H.-C. Yin}
\textwidth=160mm
\textheight=220mm
\oddsidemargin=0mm
\evensidemargin=0mm
\headheight=10mm
\headsep=3mm
\footskip=4mm
\topmargin=0mm

\footskip=0pt
\footnotesep=2pt

\allowdisplaybreaks


\newcommand{\no}{\nonumber}

\newcommand{\doubleint}{\int\!\!\!\!\!\int}

\newcommand{\R}{\mathbb R}

\newcommand{\p}{\partial}
\newcommand{\ve}{\varepsilon}
\newcommand{\f}{\frac}
\newcommand{\la}{\lambda}

\newcommand{\al}{\alpha}
\renewcommand{\t}{\tilde}

\newcommand{\g}{\gamma}

\newcommand{\dl}{\delta}
\newcommand{\ds}{\displaystyle}
\newcommand{\RN}[1]{\textup{\uppercase\expandafter{\romannumeral#1}}}
\newcommand{\mcR}{\mathcal{R}}

\def\defeq{\stackrel{\rm def}{=}}
\def\opdiv{\operatorname{div}}
\def\opcurl{\operatorname{curl}}
\def\supp{\operatorname{supp}}
\def\ls{\lesssim}
\def\gt{\gtrsim}

\theoremstyle{plain}
\newtheorem{theorem}{Theorem}[section]

\newtheorem{lemma}[theorem]{Lemma}

\theoremstyle{definition}

\theoremstyle{remark}
\newtheorem{remark}{Remark}[section]

\newtheorem*{acknowledgement}{Acknowledgement}

\numberwithin{equation}{section}


\title{On the global existence and blowup of smooth solutions to
the multi-dimensional compressible Euler equations with time-depending damping}

\author{Fei Hou$^{1, *}$ \qquad Huicheng
  Yin$^{2, }$\footnote{Fei Hou (\texttt{houfeimath@gmail.com}) and
    Huicheng Yin (\texttt{huicheng$@$nju.edu.cn}) were supported by
    the NSFC (No.~11571177) and the Priority Academic Program
    Development of Jiangsu Higher Education Institutions.}\\
    [12pt] {\small 1. Department of Mathematics and IMS,
  Nanjing University, Nanjing 210093, China}\\
  {\small 2. School of Mathematical Sciences, Nanjing
  Normal University, Nanjing 210023, China}}


\begin{document}
\date{}
\maketitle
\thispagestyle{empty}

\begin{abstract}
In this paper, we are concerned with the global existence and blowup of
smooth solutions to the multi-dimensional compressible Euler equations
with time-depending damping
\begin{equation*}
\left\{ \enspace
\begin{aligned}
  &\p_t\rho+\opdiv(\rho u)=0,\\
  &\p_t(\rho u)+\opdiv\left(\rho u\otimes u+p\,\RN{1}_d\right)=-\al(t)\rho u,\\
  &\rho(0,x)=\bar \rho+\ve\rho_0(x),\quad u(0,x)=\ve u_0(x),
\end{aligned}
\right.
\end{equation*}
where $x=(x_1, \cdots, x_d)\in\Bbb R^d$ $(d=2,3)$,
the frictional coefficient is $\al(t)=\frac{\mu}{(1+t)^\la}$ with $\la\ge0$ and
$\mu>0$, $\bar\rho>0$ is a constant, $\rho_0,u_0 \in C_0^\infty(\R^d)$,
$(\rho_0,u_0)\not\equiv 0$, $\rho(0,x)>0$, and $\ve>0$ is sufficiently small.
One can totally divide the range of $\la\ge0$ and $\mu>0$ into the following four cases:

Case 1: $0\le\la<1$,~$\mu>0$ for $d=2,3$;

Case 2: $\la=1$, $\mu>3-d$ for $d=2,3$;

Case 3: $\la=1$,~$\mu\le 3-d$ for $d=2$;

Case 4: $\la>1$,~$\mu>0$ for $d=2,3$.

\noindent We show that there exists a global $C^{\infty}-$smooth solution $(\rho, u)$ in Case 1, and Case 2 with $\opcurl u_0\equiv 0$,
while in Case 3 and Case 4, in general, the solution $(\rho, u)$ blows up in finite time.
Therefore, $\la=1$ and $\mu=3-d$ appear to be the critical power and critical value, respectively,
for the global existence of small amplitude smooth solution $(\rho, u)$ in $d-$dimensional compressible Euler equations
with time-depending damping.

\noindent
\textbf{Keywords.} Compressible Euler equations, damping,
time-weighted energy inequality, Klainerman-Sobolev inequality,
blowup,  hypergeometric function.

\noindent
\textbf{2010 Mathematical Subject Classification.} 35L70, 35L65, 35L67, 76N15.
\end{abstract}

\section{Introduction}
In this paper, we are concerned with the global existence and blowup of $C^{\infty}-$smooth solution
$(\rho, u)$ to the multi-dimensional compressible Euler
equations with time-depending damping
\begin{equation}\label{euler-eqn}
\left\{ \enspace
\begin{aligned}
&\p_t\rho+\opdiv(\rho u)=0,\\
&\p_t(\rho u)+\opdiv(\rho u\otimes u+p\,\RN{1}_d)=-\al(t)\rho u,\\
&\rho(0,x)=\bar \rho+\ve\rho_0(x),\quad u(0,x)=\ve u_0(x),
\end{aligned}
\right.
\end{equation}
where $x=(x_1,\cdots,x_d)\in\R^d$, $d=2,3$,
$\rho$, $u=(u_1,\cdots,u_d)$, and $p$ stand for the density, velocity and pressure,
respectively, $\RN{1}_d$ is the $d\times d$ identity matrix, the frictional coefficient
is $\al(t)=\frac{\mu}{(1+t)^\la}$ with $\la\ge0$ and $\mu>0$, and $u_0=(u_{1,0},\cdots,u_{d,0})$.
The state equation of the gases is described by $p(\rho)=A\rho^{\g}$, where $A>0$ and $\g>1$ are constants.
In addition, $\bar\rho>0$ is a constant, $\rho_0,u_0\in C_0^\infty(\R^d)$,
$\supp\rho_0,\supp u_0 \subseteq \{x\colon|x|\le M\}$, $(\rho_0, u_0)\not\equiv 0$,
$\rho(0,x)>0$, and $\ve>0$ is sufficiently small. For the physical background of \eqref{euler-eqn},
it can be found in \cite{Da} and the references therein.

For $\mu=0$ in $\al(t)$, \eqref{euler-eqn} is the standard compressible Euler equation.
It is well known that smooth solution $(\rho, u)$ of \eqref{euler-eqn} will generally blow up in finite time.
For examples, for a special class of initial data $(\rho(0,x), u(0,x))$,
Sideris~\cite{Sideris85} has proved that the smooth solution $(\rho, u)$ of \eqref{euler-eqn} in three space dimensions can
develop singularities in finite time, and Rammaha in \cite{Rammaha89} has proved a blowup result in two space dimensions.
For more extensive literature on the blowup results and the blowup mechanism for $(\rho, u)$,
see \cite{Alinhac93, Alinhac99a, Alinhac99b,Chr07,CM14,CL15,DWY16,Sideris97,Speck14,Yin04} and the references therein.

For $\la=0$ in $\al(t)$, it has been shown that \eqref{euler-eqn} admits a global
smooth solution $(\rho, u)$, moreover, the long-term behavior of the solution $(\rho,u)$
has been established, see \cite{HL92,HS96,KY04,Nish,PZ09,STW03,TW12,WY01,WY07}.
In particular, in \cite{STW03}, the authors showed that the vorticity of velocity $u$ decays to zero exponentially in time $t$.

For $\mu>0$ and $\la>0$ in $\al(t)$, one naturally asks:
does the smooth solution of \eqref{euler-eqn} blow up in finite time or does it exist globally?
For the case of  $\opcurl u_0\equiv0$, in \cite{HWY15}, we have studied this problem in three space dimensions
and proved that for $0\le\la\le1$ and $\mu>0$ there exists a global smooth solution $(\rho, u)$ of \eqref{euler-eqn}
and while for $\la>1$, in general, the solution will blow up in finite time.
In this paper, we will remove the assumption $\opcurl u_0\equiv0$ in \cite{HWY15}
and systematically study this problem both in two and three space dimensions.

Obviously, one can divide $\la\ge0$,~$\mu>0$ into four cases:

{\bf Case 1}: $0\le\la<1$,~$\mu>0$ for $d=2,3$;

{\bf Case 2}: $\la=1$,~$\mu>3-d$, for $d=2,3$;

{\bf Case 3}: $\la=1$,~$\mu\le 3-d$ for $d=2$;

{\bf Case 4}:  $\la>1$,~$\mu>0$ for $d=2,3$.

\vskip 0.1 true cm
At first, we state the global existence results in this paper.

\vskip 0.1 true cm

\begin{theorem}[\bf Global existence for Case 1]\label{thm1}
If $0\le\la<1$ and $\mu>0$, then for small $\ve>0$, \eqref{euler-eqn} admits a global $C^\infty-$ smooth solution
$(\rho, u)$ which fulfills $\rho>0$ and which is uniformly bounded for $t\ge0$ together with all its derivatives.
In addition, the vorticity $\opcurl u$ and its derivatives decay to zero in the rate $e^{-\frac{\mu}{3(1-\la)}[(1+t)^{1-\la}-1]}$,
where $\opcurl u=\p_1u_2-\p_2u_1$ for $d=2$, and $\opcurl u=(\p_2u_3-\p_3u_2, \p_3u_1-\p_1u_3, \p_1u_2-\p_2u_1)^T$ for $d=3$.
\end{theorem}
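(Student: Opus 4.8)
The plan is to transform \eqref{euler-eqn} into a symmetric hyperbolic system with a damped-wave structure, then to establish a global-in-time time-weighted energy estimate that exploits the ``over-effective'' character of the damping when $0\le\la<1$ (namely $\int_0^t\al(s)\,ds=\f{\mu}{1-\la}[(1+t)^{1-\la}-1]\to+\infty$), and finally to read off the vorticity decay from a transport identity along the particle trajectories.

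\emph{Reformulation and local existence.} Setting $c(\rho)=\sqrt{A\g}\,\rho^{(\g-1)/2}$, introducing $\si=\f{2}{\g-1}c(\rho)$, $\bar\si=\f{2}{\g-1}c(\bar\rho)$, $\t\si=\si-\bar\si$ and $\bar c=\f{\g-1}{2}\bar\si$, a direct computation recasts \eqref{euler-eqn} as
\begin{equation*}
\begin{cases}
\p_t\t\si+u\cdot\nabla\t\si+\f{\g-1}{2}(\bar\si+\t\si)\,\opdiv u=0,\\[2pt]
\p_t u+u\cdot\nabla u+\f{\g-1}{2}(\bar\si+\t\si)\,\nabla\t\si=-\al(t)\,u,
\end{cases}
\end{equation*}
with $(\t\si,u)|_{t=0}=O(\ve)$ in $C_0^\infty(\R^d)$. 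This system is symmetric hyperbolic, so the standard theory gives, for every $s>\f{d}{2}+1$, a unique $H^s$ solution with $\rho>0$ on a common maximal interval $[0,T_*)$; eliminating $u$ shows that $\t\si$ obeys, to leading order, the wave equation with time-dependent damping $\p_t^2\t\si+\al(t)\p_t\t\si-\bar c^2\Delta\t\si=(\text{quadratic terms})$, which for $0\le\la<1$ is precisely the regime in which the damping dominates and forces diffusive-type decay.

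\emph{Global a priori estimate.} For each $s$ I would build a time-weighted energy of the form
\begin{multline*}
\mathcal E_s(t)=\sum_{|a|\le s}w_1(t)\big(\|\p_x^a\t\si(t)\|_{L^2}^2+\|\p_x^a u(t)\|_{L^2}^2\big)\\
+\int_0^t w_2(\tau)\sum_{|a|\le s}\big(\al(\tau)\|\p_x^a u(\tau)\|_{L^2}^2+\|\p_x^a\nabla\t\si(\tau)\|_{L^2}^2\big)\,d\tau,
\end{multline*}
where the weights $w_1,w_2$ are the powers of $1+t$ (equivalently of $\int_0^t\al$) dictated by the dissipative scaling. The dissipation of $u$ comes directly from the friction term $-\al(t)u$, while the missing dissipation of $\nabla\t\si$ is recovered through the classical damped-wave multiplier, i.e.\ by pairing $\p_t u$ with $\nabla\t\si$. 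Applying $\p_x^a$, taking $L^2$ inner products, and commuting the quasilinear coefficients $\bar\si+\t\si$ and the transport operator $u\cdot\nabla$ through $\p_x^a$ by Moser-type estimates, one arrives at $\f{d}{dt}\mathcal E_s(t)\ls\|(\t\si,u)(t)\|_{W^{1,\infty}}\,\mathcal E_s(t)$. A Klainerman--Sobolev type inequality adapted to the parabolic scaling of this over-effective regime then bounds $\|(\t\si,u)(t)\|_{W^{1,\infty}}$ by a time-integrable multiple of $\mathcal E_s(t)^{1/2}$, so that a continuity/bootstrap argument starting from $\mathcal E_s(0)\ls\ve^2$ yields $\mathcal E_s(t)\le C_s\ve^2$ on $[0,T_*)$ uniformly in $t$. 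In particular $\|\t\si(t)\|_{L^\infty}\le C\ve$ keeps $\rho$ bounded away from $0$ and $\infty$, and the uniform bound precludes blowup, whence $T_*=+\infty$; letting $s\to\infty$ gives a global $C^\infty$ solution all of whose derivatives are uniformly bounded for $t\ge0$. I expect this a priori estimate to be the main obstacle: unlike the constant-damping case one cannot use fixed weights, they must be tuned to the borderline growth $\int_0^t\al\sim(1+t)^{1-\la}$, and one has to retain simultaneously (i) enough dissipation to absorb the quasilinear commutators and (ii) enough decay of $\|(\t\si,u)\|_{W^{1,\infty}}$ to close the bootstrap; the transport operator $u\cdot\nabla$, absent in the irrotational reduction used in \cite{HWY15}, is what forces one to work with the full system here.

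\emph{Decay of the vorticity.} Taking $\opcurl$ of the velocity equation annihilates the term $\f{\g-1}{2}(\bar\si+\t\si)\nabla\t\si$ and produces, for $d=2$, the transport equation $\p_t\omega+u\cdot\nabla\omega+(\opdiv u+\al(t))\omega=0$ for the scalar $\omega=\opcurl u$, and for $d=3$ the analogous system with the extra vortex-stretching term $-(\omega\cdot\nabla)u$; in both cases the perturbative coefficients $\opdiv u$ and $\nabla u$ are $O(\ve)$ and time-integrable by the decay already obtained. An $L^2$ energy estimate for $\p_x^a\omega$ with the multiplier $\exp\!\big(\tfrac{2\mu}{3(1-\la)}[(1+t)^{1-\la}-1]\big)$ --- the factor $\tfrac13$ leaving room to swallow the $O(\ve)$ coefficients and the commutators uniformly in $|a|$ and for $d=2,3$ --- then shows that $\opcurl u$ and all of its derivatives decay at the rate $e^{-\f{\mu}{3(1-\la)}[(1+t)^{1-\la}-1]}$, as asserted.
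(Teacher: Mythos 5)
Your overall architecture (symmetrization, time-weighted energy with friction dissipation for $u$ and a damped-wave multiplier for the acoustic part, then a separate exponential-weight estimate for the vorticity) is the right family of ideas, and your vorticity step is essentially the paper's: multiplying the damped transport equation for $w$ by $\Xi(t)^{2/3}$ with $\Xi$ as in \eqref{Xi-def} gives exactly the rate $e^{-\frac{\mu}{3(1-\la)}[(1+t)^{1-\la}-1]}$. But the closure of your global a priori estimate has a genuine gap. You reduce to $\frac{d}{dt}\mathcal E_s(t)\ls\|(\t\si,u)(t)\|_{W^{1,\infty}}\mathcal E_s(t)$ and then invoke ``a Klainerman--Sobolev type inequality adapted to the parabolic scaling'' to make $\|(\t\si,u)(t)\|_{W^{1,\infty}}$ time-integrable. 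No such inequality is available in your setting: your energy contains only spatial (and time) derivatives with a fixed power of $1+t$ as weight, so Sobolev embedding yields only $\|(\t\si,u)(t)\|_{W^{1,\infty}}\ls\ve$, uniformly in $t$ but with no decay, and that is not integrable. Worse, for $0\le\la<1$ the solution itself is not expected (and is not claimed in Theorem~\ref{thm1}) to decay at an integrable rate; the theorem asserts only uniform boundedness of $(\rho,u)$, and any pointwise-in-time decay strong enough to make $\|(\t\si,u)\|_{W^{1,\infty}}$ integrable would be a substantial additional theorem (diffusion-wave asymptotics for a time-degenerate damping), not a corollary of the uniform energy bound. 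So the Gronwall/bootstrap argument as you state it does not close. A second, related omission: with time weights $w_1,w_2$ in the energy, differentiating the weights produces terms like $w_1'(t)\|\cdot\|^2$ that must be dominated by the dissipation; this is precisely where $\la<1$ and a starting time $t_0$ with $(1+t_0)^{1-\la}=\max\{2/\mu,1\}$ enter (cf.\ \eqref{critical-time} and \eqref{3.17}), and you acknowledge the issue but do not resolve it.

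The paper closes the estimate without any pointwise decay, and this is the idea your proposal is missing. One derives the damped wave equation \eqref{theta-eqn} for $\theta$, multiplies $\p^\al$ of it by the combination $2(1+t)^{2\la}\p_t\p^\al\theta+\mu(1+t)^\la\p^\al\theta$, and, using \eqref{3.17} for $t\ge t_0$, produces the space-time dissipation $\int(1+s)^\la\|\p\p^{\le3}\theta\|^2ds$ on the left of \eqref{3.15}; the friction gives $\int(1+s)^{-\la}\|u\|^2ds$ (Lemma~\ref{lem-velocity2}) and the vorticity equation gives $\int(1+s)^\la\|\p^{\le3}w\|^2ds$ (Lemma~\ref{lem-vorticity}); the div-curl estimate \eqref{3.2} recovers $\|\nabla\p^{\le3}u\|$ from $w$, $\opdiv u$ and $\p\theta$. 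Every quadratic error term is then written, after suitable integrations by parts (needed e.g.\ for $\theta\Delta\p^\al\theta$ and $u_iu_j\p^2_{ij}\p^\al\theta$, which your first-order commutator bound does not capture), as $K_1\ve$ times one of these dissipation integrals, so the bootstrap $\mathcal{E}_4[\theta,u]\le K_1\ve\Rightarrow\mathcal{E}_4[\theta,u]\le\frac12K_1\ve$ closes using only the uniform smallness \eqref{3.1}, with no time-integrability of $L^\infty$ norms required. If you replace your Gronwall step by this absorption-into-dissipation bookkeeping (and fix the weights as in \eqref{energy1}), your outline becomes essentially the paper's proof; as written, the key analytic step is asserted rather than proved, and it is the step most likely to fail.
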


\vskip 0.1 true cm

\begin{theorem}[\bf Global existence for Case 2 with $\opcurl u_0\equiv0$]\label{thm2}
If $\la=1$, $\mu>3-d$ and $\opcurl u_0\equiv 0$, then for small $\ve>0$, \eqref{euler-eqn} admits a global $C^\infty-$ smooth solution
$(\rho, u)$ which fulfills $\rho>0$ and which is uniformly bounded for $t\ge0$ together with all its derivatives.
\end{theorem}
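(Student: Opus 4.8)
The plan is to reduce the system to a single quasilinear wave equation and run a time-weighted energy estimate with a weight tuned to the critical damping $\alpha(t)=\mu/(1+t)$. Since $\operatorname{curl} u_0\equiv 0$ and the vorticity equation (obtained as in \cite{STW03}) propagates vanishing vorticity, we have $\operatorname{curl} u\equiv 0$ for all times, so $u=\nabla\phi$ for a velocity potential $\phi$. Writing $\rho=\bar\rho+\varepsilon\varrho$ and using the continuity equation, the system collapses to a second-order quasilinear equation of the form
\begin{equation*}
\p_t^2\phi-c^2(\bar\rho+\text{lower order})\Delta\phi+\alpha(t)\p_t\phi = Q(\nabla\phi,\nabla^2\phi,\p_t\nabla\phi),
\end{equation*}
where $c^2=p'(\bar\rho)$ and $Q$ is a quadratic-or-higher nonlinearity. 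The damping term $\frac{\mu}{1+t}\p_t\phi$ is the overdamping that one expects, for $\mu>3-d$, to behave like an effective parabolic smoothing and to beat the quadratic nonlinearity in $d=2,3$.

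First I would set up the iteration scheme and the a priori assumption: fix a high Sobolev order $N$ and assume $\sum_{|a|\le N}\|\G^a\partial\phi(t)\|_{L^2}\le 2C_0\varepsilon$ on a maximal interval, where $\G$ ranges over the usual invariant vector fields $\p$, $x_i\p_j-x_j\p_i$ (but \emph{not} the scaling field $S=t\p_t+x\cdot\nabla$ or the Lorentz boosts, since the damping breaks that invariance). The key step is to design a time-weighted energy of the form $E_k(t)=(1+t)^{2k}\big(\|\p_t\G^a\phi\|_{L^2}^2+\|\nabla\G^a\phi\|_{L^2}^2\big)+\text{(lower-order terms)}$ with exponent $k$ chosen so that the "good" contribution $\mu(1+t)^{2k-1}\|\p_t\G^a\phi\|_{L^2}^2$ from the damping dominates the "bad" contribution $2k(1+t)^{2k-1}\|\p_t\G^a\phi\|_{L^2}^2$ coming from differentiating the weight — this requires $\mu>2k$, and one needs $k$ just large enough that the weighted Klainerman–Sobolev inequality converts the $O(\varepsilon(1+t)^{-k})$ $L^2$ bound into an $L^\infty$ decay rate of $\partial\phi$ that makes the quadratic nonlinearity time-integrable. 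In $d=2,3$ this forces $k$ near $(3-d)/2 \cdot(\text{something})$, which is exactly where the threshold $\mu>3-d$ enters. Then I would close the estimate: differentiate $E_k$, integrate by parts, bound the commutators $[\G^a,\text{operator}]$ and the nonlinear remainder using the a priori $L^\infty$ decay, and recover $E_k(t)\le C_0^2\varepsilon^2$ for $\varepsilon$ small, improving the constant and giving global existence by continuation. Positivity of $\rho$ and the uniform bounds on all derivatives follow from the smallness of $\varepsilon$ and standard local theory together with the global $L^\infty$ control.

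The main obstacle I anticipate is the borderline nature of the $L^\infty$ decay in $d=2$: unlike the $3$-D curl-free case treated in \cite{HWY15}, in two dimensions the free wave decay $t^{-1/2}$ is not integrable, so the weighted energy alone does not obviously kill the quadratic term, and one must exploit either a null-form structure of $Q$ (the potential formulation of Euler does produce such structure) or — more in keeping with the damping philosophy — the extra decay $(1+t)^{-k}$ with $k$ pushed as close to $(3-d)/2+$ as the constraint $\mu>3-d$ permits, so that the total $L^\infty$ rate of $\partial\phi$ is strictly faster than $t^{-1}$ and the nonlinear integral $\int^t\|\partial\phi\|_{L^\infty}\,ds$ converges. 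Balancing these two competing exponents — the weight exponent $k<\mu/2$ needed for the energy to be monotone, versus the exponent needed for nonlinear integrability — is precisely what makes $\mu=3-d$ critical, and getting the bookkeeping exactly right (including the lower-order terms in $E_k$ and the treatment of the field $x_i\p_j-x_j\p_i$ acting on the non-radial part) will be the technical heart of the argument.
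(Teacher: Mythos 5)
Your overall architecture (curl-free reduction, a damped wave equation for the acoustic variable, a time-weighted energy plus a bootstrap) is the same skeleton the paper uses, but there is a genuine gap at the heart of your plan: you explicitly discard the scaling field $S$ and the Lorentz boosts $H$ on the grounds that the damping breaks the invariance, and you hope to compensate with a tunable weight $(1+t)^{2k}$ subject to $\mu>2k$. With only $\p$ and rotations, the Klainerman--Sobolev-type conversion of $L^2$ bounds into pointwise bounds gains decay only in $|x|$ (in the interior region $|x|\ll t$ it gains nothing in $t$), so the best interior decay your scheme produces for $\p\phi$ is $(1+t)^{-k}$ with $k<\mu/2$. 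For $d=2$ and $\mu$ just above the threshold $3-d=1$ this is barely better than $(1+t)^{-1/2}$: not integrable, not "strictly faster than $t^{-1}$", and not enough to absorb the quadratic nonlinearity. Your own closing paragraph senses this, but neither escape route works as stated: the nonlinearity $Q$ in \eqref{Q-def}--\eqref{Q2-def} contains manifestly non-null terms such as $\theta\Delta\theta$ and $(\g-1)(1+(\g-1)\theta)|\opdiv u|^2$, so a null-form argument is not available, and pushing $k$ toward $\mu/2$ cannot beat $t^{-1}$ when $\mu$ is close to $1$. In effect, the weighted-energy-without-boosts route is what the paper itself runs in Section 3, and as Remark~\ref{rmk3.2} notes it only reaches $\la=1$, $\mu>2$ — it does not cover the full range $\mu>3-d$ claimed in Theorem~\ref{thm2}. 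Your heuristic that the threshold arises from $\mu>2k$ with $k\approx(3-d)/2$ also does not match where $\mu>1$ actually enters: in the paper it comes from the coercivity of the multiplier $2\mu(1+t)\p_tZ^\al\theta+(2\mu-1)Z^\al\theta$ and the positivity of the coefficients $(\mu-1)(2\mu-1)$ and $(\mu-1)$ in \eqref{4.16}.

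The missing idea is that the full Klainerman family (including $S$ and $H$) \emph{can} be used despite the time-dependent damping: on the light-cone-bounded support $\{|x|\le t+M\}$ the commutators of $Z^\al$ with $\frac\mu{1+t}\p_t$ produce terms like $Z^\beta\bigl(\frac\mu{1+t}\bigr)Z^{\al-\beta}\p_t\theta$, which are still of size $O\bigl(\frac1{1+t}\bigr)$ times lower-order quantities (the terms $Q^\al_{22}$, $Q^\al_{23}$, $Q^\al_{24}$ in \eqref{4.15} and \eqref{4.18}) and are absorbed inductively. It is precisely the full vector-field set, via Lemma~\ref{lem-Klainerman-ineq} and Lemma~\ref{lem-weight}, that yields the pointwise rate $|\p Z^{\le2}(\theta,u)|_\infty\ls K_3\ve\,(1+t)^{-1}\sigma_-^{-1/2}$ in \eqref{4.3}; this $t^{-1}$-type decay, combined with the spacetime integral $\int_0^t\|\p Z^{\le4}\theta\|^2\,ds$ that the critical damping makes coercive for $\mu>1$, is what closes the estimate for every $\mu>3-d$. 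A secondary, more minor difference: the paper does not pass to a velocity potential; it keeps $(\theta,u)$ and recovers $\nabla Z^\al u$ from $\opcurl u\equiv0$ and the continuity equation via Lemma~\ref{lem-divcurl} (Lemma~\ref{lem-Zvelocity}), which avoids re-deriving the quasilinear wave equation for $\phi$, though the potential formulation would not by itself be an obstruction.
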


Next we concentrate on Case 3 and Case 4. As in \cite{Rammaha89}, we introduce the two functions
\begin{align*}
  q_0(l)&\defeq \int_{x_1>l}(x_1-l)^2\left(\rho(0,x)-\bar\rho\right)dx, \\
  q_1(l)&\defeq 2\int_{x_1>l}(x_1-l)(\rho u_1)(0,x)\,dx.
\end{align*}
Before stating our blowup result for problem \eqref{euler-eqn}, we require to introduce a special hypergeometric function $\Psi(a,b,c;z)$,
where the constants $a$ and $b$ satisfy $a+b=1$ and
\begin{equation*}
ab=\left\{
\begin{aligned}
  &\frac{\mu\la}{2}, && \la>1, \\
  &\frac\mu2(1-\frac\mu2), && \la=1,
\end{aligned}
\right.
\end{equation*}
$c\in\Bbb R^+$, the variable $z\in\Bbb R$, and
\begin{equation*}
\Psi(a,b,c;z)=\ds\sum_{n=0}^{+\infty}\f{(a)_n(b)_n}{n!(c)_n}z^n
\end{equation*}
with $(a)_n=a(a+1)\cdot\cdot\cdot(a+n-1)$ and $(a)_0=1$.
It is known from \cite{EMOT} that $\Psi(a,b,c;z)$ is an analytic function of $z$ for $z\in(-1, 1)$ and $\Psi(a,b,c;0)=\Psi(a+1,b+1,c;0)=1$.
In addition, there exists a small constant $\dl_0\in(0,1)$ depending on $\mu$ and $\la$ such that for $-\frac{\dl_0}{2}\le z\le 0$,
\begin{equation}\label{Psi-bound}
\frac12 \le \Psi(a,b,1;z), \Psi(a+1,b+1,2;z) \le \frac32.
\end{equation}

\begin{theorem}[\bf Blowup for Case 3 and Case 4]\label{thm3}
Suppose $\supp\rho_0,\supp u_0\subseteq \{x\colon|x|\le M\}$ and let
\begin{align}
q_0(l)&>0, \label{q0-positive}\\
q_1(l)&\ge0 \label{q1-positive}
\end{align}
hold for all $l\in (\t M, M)$, where $\t M$ is some fixed constant satisfying $0\le \t M<M$.
Moreover, we assume that there exist two constants $M_0$ and $\Lambda$ with $\max\{\t M, M-\dl_0\}\le M_0<M$ and $\Lambda\ge 3ab$ such that
\begin{equation}\label{+condition}
    q_1(l) \ge \Lambda q_0(l)
\end{equation}
holds for all $l\in (M_0, M)$.
If $\la=1$, $\mu\le1$ for $d=2$ or $\la>1$, $\mu>0$ for $d=2,3$, then there exists an $\ve_0>0$ such that,
for $0<\ve\le\ve_0$, the lifespan $T_\ve$ of the smooth solution $(\rho, u)$ of \eqref{euler-eqn} is finite.
\end{theorem}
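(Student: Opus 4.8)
The plan is to follow the averaged-quantity / Riccati-type ODE strategy pioneered by Sideris and Rammaha, adapted to the time-dependent damping. Introduce, for the local smooth solution on $[0,T_\ve)$, the weighted moments
\[
a(t,l)\defeq\int_{x_1>l+\si(t)}(x_1-l-\si(t))^2(\rho-\bar\rho)(t,x)\,dx,\qquad
b(t,l)\defeq 2\int_{x_1>l+\si(t)}(x_1-l-\si(t))(\rho u_1)(t,x)\,dx,
\]
where $\si(t)$ is the sound speed of the background state $\bar\rho$ times an appropriate time factor that accounts for the (slow, because $\la\ge1$) decay of the effective propagation speed; the shift $l+\si(t)$ is chosen so that the finite propagation speed of \eqref{euler-eqn} keeps the integrands supported in a compact moving region and guarantees $a(0,l)=q_0(l)$, $b(0,l)=q_1(l)$. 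Differentiating in $t$ and using the two conservation laws in \eqref{euler-eqn} (mass exactly, momentum with the damping term $-\al(t)\rho u$), integration by parts turns the $x_1$-momentum flux $\rho u_1^2+p(\rho)-p(\bar\rho)$ into a manifestly nonnegative lower-order contribution after invoking $p(\rho)-p(\bar\rho)-p'(\bar\rho)(\rho-\bar\rho)\ge0$ by convexity, yielding a differential inequality of the schematic form
\[
a_t= b,\qquad b_t+\al(t)\,b\ \ge\ c_0\,\frac{(\ \cdot\ )^2}{(1+t)^{?}}\ \ge\ 0,
\]
so that $a(t,l)\ge q_0(l)>0$ is preserved and, after absorbing the damping by the integrating factor $E(t)=\exp\!\big(\int_0^t\al(s)\,ds\big)$, one gets a closed lower bound $\frac{d}{dt}\!\big(E(t)b\big)\ge 0$, hence $b(t,l)\ge \frac{E(t)}{E(0)}\big(q_1(l)+\text{positive}\big)$ — here is exactly where the dichotomy $\la\le1$ vs $\la>1$ enters: for $\la>1$, $E(\infty)<\infty$, so the damping is too weak to kill the growth, whereas for $\la=1$ the size of $\mu$ relative to $3-d$ decides whether $E(t)=(1+t)^\mu$ grows fast enough.

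The heart of the argument is to upgrade this linear growth into a genuine blowup by comparing with the hypergeometric ODE. Following the structure already set up in the excerpt, one checks that the homogeneous part of the $(a,b)$ system is governed, after the substitution $z=-\tfrac{\text{const}}{(1+t)^{?}}$ (so that $z\to 0^-$ as $t\to\infty$ and $z$ stays in $[-\dl_0/2,0]$ for $t$ large), by a second-order ODE whose solutions are $\Psi(a,b,1;z)$ and $\Psi(a+1,b+1,2;z)$ with $ab$ as prescribed in the statement; the bounds \eqref{Psi-bound} give two-sided control of these reference solutions. Using \eqref{+condition}, $q_1(l)\ge\Lambda q_0(l)$ with $\Lambda\ge 3ab$, one selects the growing reference solution and a comparison (Gronwall / maximum-principle) argument on the ODE for $a(t,l)$ to conclude that $a(t,l)$ forces a lower bound on, say, $\int_{|x|\le R(t)}(\rho-\bar\rho)\,dx$ that grows like a positive power of $t$ while the support has volume $O(R(t)^d)$; combining this with the a priori $L^\infty$ smallness $\|\rho-\bar\rho\|_{L^\infty}\le C\ve$ that a global solution would have to satisfy (this is the only place the global-existence picture is used, as a contradiction hypothesis) yields $\ve\gtrsim t^{\kappa}$ for some $\kappa>0$, which is impossible for $t$ large once $\ve\le\ve_0$. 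Hence $T_\ve<\infty$.

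The main obstacle, and the step requiring the most care, is making the differential inequality for $(a,b)$ genuinely closed and correctly weighted in $t$: the damping term $-\al(t)\rho u_1$ produces $-\al(t)b$ on the left, but the nonnegative forcing on the right (coming from $\rho u_1^2$ and the pressure convexity) must be kept and its time weight tracked precisely, because the whole $\la=1$, $\mu=3-d$ criticality is decided by a competition between the polynomial factor $E(t)^{-1}=(1+t)^{-\mu}$ from the integrating factor and the polynomial decay $(1+t)^{-(d-1)}$ coming from the spreading of the wave in $d$ dimensions; getting the exponents to line up is what pins down the hypergeometric parameters $a,b$ and the threshold $\mu\le 3-d$. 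A secondary technical point is the choice of the shift $\si(t)$: since the sound speed is roughly constant but the solution is only known to exist locally, one must ensure $l+\si(t)$ stays within the light cone so that the boundary terms in the integration by parts vanish for all $l\in(M_0,M)$ on the whole interval $[0,T_\ve)$, which requires a finite-propagation-speed estimate uniform on the (a priori unknown) lifespan — standard, but needs to be stated carefully before the ODE comparison can be run.
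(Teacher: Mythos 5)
Your plan breaks down at two essential points, and both are precisely where the paper has to work hardest. First, the claimed closed ordinary differential inequality ``$a_t=b$, $b_t+\al(t)b\ge$ nonnegative forcing'' is not what the computation gives. Differentiating the momentum average produces the flux $\int_{x_1>l}2\bigl(\rho u_1^2+p-\bar p\bigr)dx$, and the pressure part has no sign; convexity only gives $p-\bar p\ge \rho-\bar\rho$ (with $\bar c=1$), and the subtracted linear piece is exactly $\int_{x_1>l}2(\rho-\bar\rho)dx=\p_l^2P(t,l)$. So what one actually obtains is the damped \emph{wave} inequality in the two variables $(t,l)$, namely \eqref{5.5}: $\p_t^2P-\p_l^2P+\frac{\mu}{(1+t)^\la}\p_tP\ge G\ge0$, and the term $\p_l^2P$ cannot be discarded because $P$ has no a priori sign. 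Consequently your assertion that ``$a(t,l)\ge q_0(l)>0$ is preserved'' by an integrating factor is unproved; establishing the nonnegativity and the quantitative lower bound \eqref{5.8} for $P$ on the strip $\Sigma$ is the core difficulty, and the paper does it by Riemann's representation for the operator $\p_t^2-\p_l^2+\frac{\mu}{(1+t)^\la}\p_t$ in characteristic coordinates, where the hypergeometric function $\Psi$ enters as the Riemann function (not as a solution of an ODE for your $(a,b)$ pair), and where the hypothesis \eqref{+condition} with $\Lambda\ge3ab$ together with \eqref{Psi-bound} is used exactly to make the boundary integral in that representation nonnegative. (Incidentally, your integrating-factor conclusion is also stated with the factor inverted: $\frac{d}{dt}(E(t)b)\ge0$ would give $b\ge E(t)^{-1}E(0)q_1$, not $b\ge\frac{E(t)}{E(0)}q_1$.)

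Second, your terminal contradiction is not available: you compare a growing average of $\rho-\bar\rho$ with ``the a priori $L^\infty$ smallness $\|\rho-\bar\rho\|_{L^\infty}\le C\ve$ that a global solution would have to satisfy,'' but global existence of a smooth solution carries no such uniform smallness bound — smallness is assumed only at $t=0$, and nothing prevents a hypothetical global solution from growing. The paper's mechanism is different and self-contained: from the lower bound \eqref{5.8} one forms $F(t)$ as in \eqref{5.9}, uses the convexity term $G$ together with Cauchy--Schwarz (H\"older for $\g\neq2$) to relate $F^2$ back to the forcing, and obtains the pair of Riccati-type inequalities \eqref{5.19}--\eqref{5.20}, i.e. $F''\gtrsim\ve/(t+M)$ and $F''\gtrsim F^2/[(t+M)^3\log(t/M+1)]$; iterating these forces $F$ itself to become infinite in finite time, which contradicts global smoothness and yields the explicit bound $T_\ve\le e^{C/\ve^2}$. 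The dichotomy $\la=1,\ \mu\le 3-d$ versus $\la>1$ enters through the bounds $\Xi(t)^{-1/2}\gtrsim(t+M)^{-1/2}$ and $\Xi(\tau)/\Xi(t)\gtrsim(\tau+M)/(t+M)$ used in \eqref{5.10}, not through finiteness of $\int_0^\infty\al(s)\,ds$ as in your sketch. Without the Riemann-function positivity step and the $F''\gtrsim F^2$ closure, your outline does not reach a contradiction.
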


\begin{remark} Our results in Theorem~\ref{thm1}-\ref{thm3} are strongly motivated by
considering the 1-D Burgers equation with time-depending damping term
\begin{equation}\label{burgers-eqn}
\left\{ \enspace
\begin{aligned}
&\p_t v+v\p_x v=-\,\ds\frac{\mu}{(1+t)^\la}\,v,\qquad (t,x)\in\R_+\times\R,\\
&v(0,x)=\ve v_0(x),
\end{aligned}
\right.
\end{equation}
where $\la\ge0$ and $\mu>0$ are constants, $v_0\in C_0^{\infty}(\R)$, $v_0\not\equiv 0$,
and $\ve>0$ is sufficiently small. One may directly obtain that by the method of characteristics
\begin{equation*}
\left\{ \enspace
\begin{aligned}
  &T_{\ve}=\infty, & \text{if $0\le\la<1$ or $\la=1$, $\mu>1$,}\\
  &T_{\ve}<\infty, & \text{if $\la>1$ or $\la=1$, $0<\mu\le 1$,}
\end{aligned}
\right.
\end{equation*}
where $T_{\ve}$ is the lifespan of the smooth solution $v$ of \eqref{burgers-eqn}.
Especially in the case of $0\le\la<1$, $v$ exponentially decays to zero with respect to the time $t$.
This means that $\la=1$ and $\mu=1$ appear to be the critical power and critical value respectively,
for the global existence of smooth solution $v$ of \eqref{burgers-eqn}.
\end{remark}

\begin{remark}
For the three dimensional problem \eqref{euler-eqn} and the case $\la=0$ in $\al(t)$,
the authors in \cite{STW03} proved that the fluid vorticity decays to zero  exponentially in time,
while the solution $(\rho, u)$ does not decay exponentially.
In {\bf Case 1} of $0\le\la<1$ and $\mu>0$, we have precisely proved that the vorticity $\opcurl u$
decays to zero in the rate $e^{-\frac{\mu}{3(1-\la)}[(1+t)^{1-\la}-1]}$ in Theorem~\ref{thm1}.
\end{remark}

\begin{remark}
In Theorem~\ref{thm2}, we pose the assumption of $\opcurl u_0\equiv 0$ for {\bf Case 2}.
If not, it seems difficult for us to obtain the uniform control on the vorticity $\opcurl u$
by our method. Namely, so far we do not know whether the assumption of $\opcurl u_0\equiv 0$
can be removed in order to obtain the global existence of $(\rho, u)$ in {\bf Case 2}.
\end{remark}

\begin{remark}
It is not hard to find a large number of initial data $(\rho,u)(0,x)$
such that \eqref{q0-positive}-\eqref{+condition} are satisfied.
For instance, choosing $\rho_0(x)>0$ and $u_{1,0}(x)=x_1\rho_0(x)\Lambda/\bar\rho$,
then we get \eqref{q0-positive}-\eqref{+condition}.
\end{remark}

\begin{remark}\label{rem1.6} In \cite{Sideris85} and \cite{{Rammaha89}}, the authors have
shown the formation of singularities in multi-dimensional compressible Euler equations
(corresponding $\mu=0$ in \eqref{euler-eqn}) under the assumptions of \eqref{q0-positive}-\eqref{q1-positive}.
However, in order to prove the blowup result of smooth solution $(\rho, u)$ to problem \eqref{euler-eqn} and
overcome the difficulty arisen by the time-depending frictional coefficient $\f{\mu}{(1+t)^\la}$ with $\mu>0$ and $\la\ge1$,
we pose an extra assumption \eqref{+condition} except \eqref{q0-positive}-\eqref{q1-positive},
which leads to the non-negativity lower bound of $P(t,l)$ in \eqref{5.8}
so that two ordinary differential blowup inequalities \eqref{5.19}-\eqref{5.20} can be
established. One can see more details in $\S 5$.
\end{remark}

\begin{remark}
If the damping term $\al(t)\rho u$ in \eqref{euler-eqn} is replaced by $(\al_1(t)\rho u_1,\cdots,\al_d(t)\rho u_d)^T$
with $\al_i(t)=\frac{\mu_i}{(1+t)^{\la_i}}$ ($i=1,\cdots,d$), and there exists some $i_0$ $(1\le i_0\le d)$
such that $\la_{i_0}$ and $\mu_{i_0}$ satisfy {\bf Case 3} or {\bf Case 4}.
In this case, one can define the new quantities
\begin{align*}
  q_0(l)&= \int_{x_{i_0}>l}(x_{i_0}-l)^2\left(\rho(0,x)-\bar\rho\right)dx, \\
  q_1(l)&= 2\int_{x_{i_0}>l}(x_{i_0}-l)(\rho u_2)(0,x)\,dx
\end{align*}
and
\begin{equation*}
    P(t,l)=\int_{x_{i_0}>l}(x_{i_0}-l)^2\left(\rho(t,x)-\bar\rho\right)dx
\end{equation*}
instead of the ones in \eqref{q0-positive}-\eqref{q1-positive} and \eqref{5.1}, respectively,
we then obtain an analogous result in Theorem~\ref{thm3} by applying the same procedure in $\S 5$.
\end{remark}

Let us indicate the proofs of Theorems~\ref{thm1}-\ref{thm3}. Without loss of generality, from now on we
assume that $\bar c=c(\bar\rho)=1$, where $c(\rho)=\sqrt{P'(\rho)}$ is the sound speed.
At first, we reformulate problem \eqref{euler-eqn}. Set
\begin{equation}\label{theta-def}
    \theta \defeq \frac1{\g-1}(A\g\rho^{\g-1}-1)=\frac1{\g-1}(c^2(\rho)-1).
\end{equation}
Then problem \eqref{euler-eqn} can be rewritten as
\begin{equation}\label{euler-reform}
    \left\{ \enspace
\begin{aligned}
&\p_t\theta+u\cdot\nabla\theta+(1+(\g-1)\theta)\opdiv u=0, \\
&\p_tu+\frac\mu{(1+t)^{\la}}u+u\cdot\nabla u+\nabla\theta=0, \\
&\theta(0,x)=\frac{1}{\g-1}[(1+\frac{\ve\rho_0(x)}{\bar\rho})^{\g-1}-1]
    \defeq \ve\theta_0(x)+\ve^2g(x,\ve), \\
& u(0,x)=\ve u_0(x),
\end{aligned}
\right.
\end{equation}
where $\nabla=(\p_1,\cdots,\p_d)=(\p_{x_1},\cdots,\p_{x_d})$, $\theta_0(x)=\frac{\rho_0(x)}{\bar\rho}$ and
$g(x,\ve)=(\g-2)\frac{\rho_0^2(x)}{\bar\rho^2}\int_0^1 (1+\frac{\sigma\ve\rho_0(x)}{\bar\rho})^{\g-3}(1-\sigma)\,d\sigma$.
Note that $g(x,\ve)$ is smooth in $(x,\ve)$ and has compact support in $x$.

To prove Theorem~\ref{thm1}, we introduce such a time-weighted energy
\begin{equation}\label{energy1}
    \mathcal{E}_k[\Phi](t) \defeq (1+t)^\la\sum_{1\le|\al|+j\le k}
        \|\p_t^j\nabla^\al\Phi(t,\cdot)\|+\|\Phi(t,\cdot)\|,
\end{equation}
where $k$ is a fixed positive number, and $\|\cdot\|$ stands for the $L_x^2$ norm on $\R^d$, i.e.,
\[
  \|\Phi(t,\cdot)\| \defeq \|\Phi(t,x)\|_{L_x^2(\R^d)}=
    \left(\int_{\R^d}|\Phi(t,x)|^2dx\right)^\frac12.
\]
Denote by
\begin{equation}\label{energy+}
\mathcal{E}_k[\Phi_1,\Phi_2](t) \defeq \mathcal{E}_k[\Phi_1](t)+\mathcal{E}_k[\Phi_2](t).
\end{equation}
For  $0\le\la<1$ and $\mu>0$, one can choose a constant $t_0$ such that
\begin{equation}\label{critical-time}
    (1+t_0)^{1-\la}=\max\,\{\frac2\mu, 1\},
\end{equation}
so that problem \eqref{euler-reform} has a local solution $(\theta, u)\in C^{\infty}([0, t_0]\times\R^3)$
by the smallness of $\ve>0$ (see the local existence result for the multidimensional hyperbolic systems in \cite{Ma}).
Making use of the vorticity $\opcurl u$ and the conditions of $0\le\la<1$ and $\mu>0$ in {\bf Case 1},
and simultaneously taking the delicate analysis on the system \eqref{euler-reform},
the uniform time-weighted energy estimates for $\mathcal{E}_4[\theta, u](t)$ are obtained.
This, together with the continuity argument, yields the proof of Theorem~\ref{thm1}.

Since we have proved Theorem~\ref{thm2} in \cite{HWY15} for the {\bf Case 2} with $\opcurl u_0\equiv 0$ in three space dimensions,
we only require to focus on the proof of Theorem~\ref{thm2} in two space dimensions.
For this purpose, we define another energy
\begin{equation}\label{energy2}
    E_k[\Phi](t) \defeq (1+t)^\frac12\sum_{0\le|\al|\le k-1}
        \|\p Z^\al\Phi(t,\cdot)\|+(1+t)^{-\frac12}\|\Phi(t,\cdot)\|,
\end{equation}
where $\p=(\p_t, \p_{x_1}, \p_{x_2})$, $Z=(Z_0, Z_1, \dots, Z_6)=(\p, S, R, H)$ with the scaling field $S=t\p_t+x_1\p_1+x_2\p_2$,
the rotation field $R=x_1\p_2-x_2\p_1$, the Lorentz fields $H=(H_1, H_2)=(x_1\p_t+t\p_1, x_2\p_t+t\p_2)$
and $Z^\al=Z_0^{\al_0}Z_1^{\al_1}\cdots Z_6^{\al_6}$.
From \eqref{euler-reform} we may derive a damped wave equation of $\theta$ as follows
\begin{equation}\label{damped-wave}
    \p_t^2\theta+\frac\mu{1+t}\p_t\theta-\Delta\theta=Q(\theta,u),
\end{equation}
where the expression of $Q(\theta,u)$ will be given in \eqref{Q-def} below.
Thanks to $\opcurl u\equiv0$, we can get the estimates of velocity $u$
from the equations in \eqref{euler-reform} (see Lemma~\ref{lem-Zvelocity}).
By $\mu>1$ and a rather technical analysis on the damped wave equation \eqref{damped-wave},
we eventually show in $\S 4$ that $E_5[\theta,u](t) \le \frac12 \,K_3\ve$ (see \eqref{energy+} for the
definition of $E_5[\theta,u](t)$) holds when
$E_5[\theta,u](t) \le K_3\ve$ is assumed for some suitably large constant $K_3>0$ and small $\ve>0$.
Based on this and the continuity argument, the global existence of $(\theta, u)$ and then
Theorem~\ref{thm2} in two space dimensions are established for $\la=1$,~$\mu>1$ and $\opcurl u_0\equiv0$.

To prove the blowup result in Theorem~\ref{thm3}, as in \cite{Rammaha89,Sideris85},
we shall derive some blowup-type second-order ordinary differential inequalities in $\S 5$.
From this and assumptions \eqref{q0-positive}-\eqref{+condition}, an upper bound of the lifespan $T_{\ve}$
is derived by making use of $\la=1$, $\mu\le 3-d$ or $\la>1$, and then the proof of Theorem~\ref{thm3} is completed.

Here we point out that in \cite{HWY15}, for the 3-d {\bf irrotational} compressible Euler equations,
it has been shown that for $0\le\la\le1$, there exists a global $C^{\infty}-$smooth small amplitude solution $(\rho, u)$,
while for $\la>1$, the smooth solution $(\rho, u)$ generally blows up in finite time.
This means that we have extended the global existence and blowup results in \cite{HWY15} for the
3-D irrotational flows to the 2-D and 3-D full Euler systems.

In the whole paper, we shall use the following convention:

\begin{itemize}
\item $C$ will denote a generic positive constant which is independent
      of $t$ and $\ve$.

\item $A\ls B$ or $B\gt A$ means $A\le CB$.

\item $r=|x|=\sqrt{x_1^2+\cdots+x_d^2}$,~~$\sigma_-(t,x)\defeq
      \sqrt{1+(r-t)^2}$.

\item $\|\Phi(t,\cdot)\| \defeq \|\Phi(t,x)\|_{L_x^2(\R^d)}$,~~
      $|\Phi(t,\cdot)|_\infty \defeq |\Phi(t,x)|_{L_x^\infty}=
      \ds\sup_{x\in\R^d}|\Phi(t,x)|$.

\item $Z$ denotes one of the Klainerman vector fields $\{\p, S, R, H\}$
      on $\R_+\times\R^2$, where $\p=(\p_t, \p_{x_1}, \p_{x_2})$,
      $S=t\p_t+x_1\p_1+x_2\p_2$, $R=x_1\p_2-x_2\p_1$ and $H=(H_1, H_2)=
      (x_1\p_t+t\p_1, x_2\p_t+t\p_2)$.

\item For two vector fields $X$ and $Y$, $[X,Y] \defeq XY-YX$ denotes
      the Lie bracket.

\item Greek letters $\al,\beta,\cdots$ denote multiple indices, i.e.,
      $\al=(\al_0,\cdots,\al_m)$,  and $|\al|=\al_0+\cdots+\al_m$ denotes
      its length, where $\al_i$ is some non-negative integer for all $i=0,\cdots,m$.

\item For two multiple indices $\al$ and $\beta$, $\beta\le\al$ means
      $\beta_i\le\al_i$ for all $i=0,\cdots,m$ while $\beta<\al$ means
      $\beta\le\al$ and $\beta_i<\al_i$ for some $i$.

\item For the differential operator $O=(O_0,\cdots,O_m)$, for example, $O=(\p_t,
      \p_{x_1},\cdots,\p_{x_d})$ in $\S 3$  and $O=
      (\p_t, \p_{x_1}, \p_{x_2}, S, R, H)$ in $\S 4$,
      denote $O^\al \defeq O_0^{\al_0}\cdots O_m^{\al_m}$, $O^{\le\al}
      \defeq \ds\sum_{0\le\beta\le\al}O^\beta$, $O^{<\al} \defeq \ds\sum_
      {0\le\beta<\al}O^\beta$ and $O^{\le k} \defeq \ds\sum_{0\le|\al|\le k}
      O^\al$ with $k$ is an integer.

\item Leibniz's rule: $O^\al(\Phi\Psi)=\ds\sum_{0\le\beta\le\al}C_{\al,\beta}
      O^\beta\Phi O^{\al-\beta}\Psi$ will be abbreviated as \\
      $O^\al(\Phi\Psi)=\ds\sum_{0\le\beta\le\al}O^\beta\Phi O^{\al-\beta}\Psi$.

\item $\Xi$ is the solution of $\ds \Xi'(t) = \frac{\mu}{(1+t)^\la}\,
      \Xi(t)$ with $\Xi(0)=1$, i.e.,
      \begin{equation}\label{Xi-def}
      \Xi(t)\defeq \begin{cases} e^{\frac{\mu}{1-\la}[(1+t)^{1-\la}-1]}, &
      \la\ge 0,\,\la\neq1,\\ (1+t)^\mu, & \la=1. \end{cases}
      \end{equation}

\item 
      $c(\bar\rho)=1$ will be assumed throughout (otherwise, introduce
      $X=x/c(\bar\rho)$ as new space coordinate if necessary).
\end{itemize}

\smallskip


\section{Some Preliminaries}\label{section2}

At first, we derive the scalar equation of $\theta$ in \eqref{euler-reform}.
It follows from the first equation in
\eqref{euler-reform} that
\begin{equation}\label{dtdivu1}
  \p_t\opdiv u=-\frac1{(1+(\g-1)\theta)} (\p_t^2\theta+u\cdot\nabla\p_t\theta
    +\p_tu\cdot\nabla\theta+(\g-1)\p_t\theta\opdiv u).
\end{equation}
Taking divergence on the second equation in \eqref{euler-reform} yields
\begin{equation}\label{dtdivu2}
    \opdiv\p_t u+\frac\mu{(1+t)^\la}\opdiv u+\Delta\theta+
        u\cdot\nabla\opdiv u+\sum_{i,j=1}^d \p_iu_j\p_ju_i=0,
\end{equation}
where $\Delta=\p_1^2+\cdots+\p_d^2$.
Substituting \eqref{dtdivu1} into \eqref{dtdivu2} yields the damped wave equation of $\theta$
\begin{equation}\label{theta-eqn}
    \p_t^2\theta+\frac\mu{(1+t)^\la}\p_t\theta-\Delta\theta=Q(\theta,u),
\end{equation}
where
\begin{align}
    Q(\theta,u)   &\defeq Q_1(\theta,u)+Q_2(\theta,u), \label{Q-def}\\
    Q_1(\theta,u) &\defeq (\g-1)\theta\Delta\theta-\frac\mu{(1+t)^\la}
        u\cdot\nabla\theta-2u\cdot\nabla\p_t\theta-\sum_{i,j=1}^d u_iu_j\p_{ij}^2\theta \label{Q1-def},\\
    Q_2(\theta,u) &\defeq -\sum_{i,j=1}^d u_i\p_iu_j\p_j\theta-\p_tu\cdot\nabla\theta
        +(1+(\g-1)\theta) (\sum_{i,j=1}^d \p_iu_j\p_ju_i+(\g-1)|\opdiv u|^2). \label{Q2-def}
\end{align}
Let
\begin{equation}\label{vorticity-def}
w\defeq \opcurl u=
\begin{cases}
    \p_1u_2-\p_2u_1, & d=2, \\
    (\p_2u_3-\p_3u_2, \p_3u_1-\p_1u_3, \p_1u_2-\p_2u_1)^T, & d=3.
\end{cases}
\end{equation}
Then the second equation in \eqref{euler-reform} implies that for $d=2$
\begin{equation}\label{2dvorticity-eqn}
    \p_tw+\frac\mu{(1+t)^\la}w+u\cdot\nabla w+w\opdiv u=0
\end{equation}
and for $d=3$
\begin{equation}\label{3dvorticity-eqn}
    \p_tw+\frac\mu{(1+t)^\la}w+u\cdot\nabla w+w\opdiv u=w\cdot\nabla u.
\end{equation}
To prove Theorem~\ref{thm1}-\ref{thm2}, we require to introduce the following lemma, which is easily shown.
\begin{lemma}\label{lem-divcurl}
Let $U(x)=(U_1(x),\cdots,U_d(x))$ be a vector-valued function with compact support on $\R^d$ ($d=2,3$), then there holds that
\begin{equation}\label{divcurl-estimate}
    \|\nabla U\|\le \|\opcurl U\|+\|\opdiv U\|.
\end{equation}
\end{lemma}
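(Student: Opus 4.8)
The plan is to prove the pointwise identity $-\Delta U = \opcurl\opcurl U - \nabla\opdiv U$ and then convert it into the $L^2$ estimate by integration by parts, exploiting the compact support of $U$ to discard all boundary terms. First I would recall the standard vector-calculus identity: in $\R^3$, for a smooth vector field $U$, one has $\opcurl(\opcurl U) = \nabla(\opdiv U) - \Delta U$, where $\Delta$ acts componentwise; the two-dimensional case follows from the same identity applied to $(U_1, U_2, 0)$ viewed as a field on $\R^3$ independent of $x_3$, after which $\opcurl\opcurl U$ reduces to an expression involving the scalar $\opcurl U = \p_1 U_2 - \p_2 U_1$. So it suffices to treat $d=3$ and note the $d=2$ case is formally identical.

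Next I would compute $\|\nabla U\|^2 = \sum_{i,j}\|\p_i U_j\|^2 = -\sum_{i,j}\int_{\R^d} U_j\,\p_i^2 U_j\,dx = -\int_{\R^d} U\cdot\Delta U\,dx$, where the integration by parts produces no boundary term because $U$ has compact support. Substituting the identity $-\Delta U = \opcurl\opcurl U - \nabla\opdiv U$ gives
\begin{equation*}
\|\nabla U\|^2 = \int_{\R^d} U\cdot\opcurl(\opcurl U)\,dx - \int_{\R^d} U\cdot\nabla(\opdiv U)\,dx.
\end{equation*}
Then I would integrate by parts once more in each term: using $\int U\cdot\opcurl V = \int (\opcurl U)\cdot V$ (valid with compact support) on the first term and $\int U\cdot\nabla f = -\int (\opdiv U) f$ on the second, we obtain
\begin{equation*}
\|\nabla U\|^2 = \|\opcurl U\|^2 + \|\opdiv U\|^2.
\end{equation*}
Finally, since $(a+b)^2 \ge a^2 + b^2$ for $a,b\ge 0$, we get $\|\nabla U\| = (\|\opcurl U\|^2 + \|\opdiv U\|^2)^{1/2} \le \|\opcurl U\| + \|\opdiv U\|$, which is \eqref{divcurl-estimate}.

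There is no serious obstacle here; the only points requiring a little care are the bookkeeping of which vector identity is being used (and its correct reduction in the planar case) and the justification that every integration by parts is boundary-term-free, which is immediate from $U \in C_0^\infty$ (or more generally any $U$ decaying fast enough that the identity $\|\nabla U\|^2 = \|\opcurl U\|^2 + \|\opdiv U\|^2$ makes sense). In fact the proof yields the sharper identity $\|\nabla U\|^2 = \|\opcurl U\|^2 + \|\opdiv U\|^2$, from which the stated inequality is a trivial consequence.
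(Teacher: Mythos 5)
Your proof is correct, and it is the standard argument that the paper itself omits (the lemma is only stated there with the remark that it ``is easily shown''). The pointwise identity $-\Delta U=\opcurl\opcurl U-\nabla\opdiv U$, two boundary-term-free integrations by parts justified by the compact support, and the reduction of $d=2$ to $d=3$ via $(U_1,U_2,0)$ all check out, and you even obtain the sharper equality $\|\nabla U\|^2=\|\opcurl U\|^2+\|\opdiv U\|^2$, which immediately gives \eqref{divcurl-estimate}. The only minor point worth stating explicitly is the regularity needed for the integrations by parts (e.g.\ $U\in C_0^\infty$, or $H^1$ with compact support by density), which is harmless since the lemma is applied to derivatives of smooth solutions.
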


The following Sobolev type inequality can be found in \cite{Kl87}.
\begin{lemma}\label{lem-Klainerman-ineq}
Let $\Phi(t,x)$ be a function on $\R^{1+2}$, then there exists a constant
$C$ such that
\begin{equation}\label{Klainerman-ineq}
    (1+t+r)\,\sigma_-(t,x)\,|\Phi(t,x)| \le C\sum_{|\al|\le2} \|Z^\al\Phi(t,\cdot)\|^2.
\end{equation}
\end{lemma}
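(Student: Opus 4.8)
This is the Klainerman--Sobolev inequality on $\R^{1+2}$ (Klainerman \cite{Kl87}), and the plan is to recall its proof. First I would normalize: it suffices to bound the scale--covariant quantity $(1+t+r)\,\sigma_-(t,x)\,|\Phi(t,x)|^2$ at one fixed time $t\ge0$ and one fixed point $x\in\R^2$. Since the weights $1+t+r$ and $\sigma_-(t,x)$ depend only on $|x|$, the Lebesgue measure defining $\|\cdot\|$ is rotation invariant, and the linear span of $\{\p,S,R,H\}$ is preserved by spatial rotations, I may also assume $x=(r,0)$ with $r=|x|\ge0$.

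The mechanism is a rescaled Sobolev embedding. Given $(t,x)$, the plan is to choose a ``coordinate box'' $B\ni x$ on the slice $\{t\}\times\R^2$ — a Euclidean ball when we are well inside or well outside the light cone $\{r=t\}$, and otherwise a thin curved rectangle with radial side $\sim a_1$ and transverse (angular) side $\sim a_2$ — calibrated so that $a_1a_2\sim(1+t+r)\,\sigma_-(t,x)$. Rescaling $B$ to a fixed reference domain and applying the two--dimensional embedding $H^2\hookrightarrow L^\infty$ yields
\begin{equation*}
  |\Phi(t,x)|^2\ \lesssim\ \sum_{\beta_1+\beta_2\le 2} a_1^{2\beta_1-1}a_2^{2\beta_2-1}\,\|V_1^{\beta_1}V_2^{\beta_2}\Phi(t,\cdot)\|_{L^2(B)}^2,
\end{equation*}
where $V_1,V_2$ are the two coordinate vector fields of $B$. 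Multiplying through by $a_1a_2\sim(1+t+r)\,\sigma_-$, the problem reduces to the algebraic claim that the \emph{normalized} box fields $a_1V_1$ and $a_2V_2$ are, on $B$, linear combinations of $\p,S,R,H$ whose coefficients — together with their first derivatives along $B$ — are bounded uniformly in $(t,x)$; each right--hand term is then dominated by $\sum_{|\al|\le2}\|Z^\al\Phi(t,\cdot)\|^2$, which is the assertion.

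It then remains to exhibit these boxes, by cases on the position of $(t,x)$. If $t+r\lesssim1$, take $B$ the unit ball and $V_i=\p_{x_i}$; all weights are $\sim1$ and one uses the plain embedding. If $r\le t/2$ with $t\gtrsim1$, then $1+t+r\sim\sigma_-\sim t$; take $B=B(x,t/8)$ and $V_i=\p_{x_i}$, and note that on $B$ one has $t\,\p_{x_i}=H_i-\tfrac{x_i}{t}(t\p_t)$, while the identity $(t^2-r^2)\p_t=tS-\sum_j x_jH_j$ together with $t^2-r^2\sim t^2$ shows $t\p_t$, hence $t\,\p_{x_i}$, to be bounded $Z$--combinations. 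The region $r\ge 2t$ is symmetric, with $t$ and $r$ interchanged and $r^2-t^2\sim r^2$. The remaining, delicate region is $\tfrac12t\le r\le 2t$ with $t\gtrsim1$: set $d=\sigma_-(t,x)\sim1+|r-t|$ and take $B$ with radial width $a_1\sim d$ and angular width $a_2\sim t+r$, so $a_1a_2\sim(1+t+r)\sigma_-$. The transverse field is $\tfrac1r R$, whence $a_2\cdot\tfrac1r R\sim R$; the radial field satisfies $r\p_r=S-t\p_t$, so $a_1\p_r=d\,\p_r=\tfrac{d}{r}S-\tfrac{t}{r}(d\p_t)$ with $\tfrac{d}{r},\tfrac{t}{r}=O(1)$, and $d\,\p_t$ is controlled by $d\,\p_t\approx\p_t$ when $d\lesssim1$, and, when $d\gg1$, again by $(t^2-r^2)\p_t=tS-\sum_j x_jH_j$: a box of radial width $\sim d$ centred at distance $\sim d$ from $\{r=t\}$ stays inside $\{\,|t-r|\sim d\,\}$, where $|t^2-r^2|\sim d\,(t+r)$ does not degenerate.

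The hard part is precisely this last region. One must calibrate the radial size of $B$ as $\sim1+|r-t|$ so that simultaneously (i) $B$ does not straddle $\{r=t\}$ when $|r-t|$ is large — otherwise the factor $(t^2-r^2)^{-1}$ appearing when the radial derivative is re--expressed through $S$ and $H$ blows up on $B$ — and (ii) $a_1a_2$ still recovers the full weight $(1+t+r)\,\sigma_-$. Verifying, under this calibration, that every normalized box field is genuinely a $Z$--combination with uniformly bounded coefficients (and first derivatives), and that the finitely many overlapping cases exhaust $\R_{+}\times\R^2$, is the substance of the argument and is carried out in \cite{Kl87}; the rescaling and the embedding $H^2\hookrightarrow L^\infty$ in two dimensions are then routine.
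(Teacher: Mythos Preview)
Your sketch is correct and follows the standard Klainerman argument from \cite{Kl87}. The paper itself does not supply a proof of this lemma at all: it simply states the inequality and cites \cite{Kl87}, so there is no ``paper's own proof'' to compare against beyond that reference. Your outline --- rescaled $H^2\hookrightarrow L^\infty$ on boxes whose radial and angular sizes are calibrated to $(1+|t-r|)$ and $(1+t+r)$ respectively, together with the algebraic identities $(t^2-r^2)\p_t=tS-\sum_j x_jH_j$, $r\p_r=S-t\p_t$, $t\p_{x_i}=H_i-\tfrac{x_i}{t}(t\p_t)$ --- is exactly the mechanism of \cite{Kl87}, and your case decomposition is the standard one.

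One minor remark: as written in the paper the inequality has a typo --- the left side should carry $|\Phi(t,x)|^2$ (or, equivalently, the right side should be $\big(\sum_{|\al|\le2}\|Z^\al\Phi\|\big)^2$ rather than a sum of squares). You silently corrected this by announcing at the outset that you would bound $(1+t+r)\,\sigma_-\,|\Phi|^2$; that is the right normalization, and it is consistent with how the lemma is actually used later in \eqref{4.3}.
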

In addition, we have
\begin{lemma}\label{lem-weight}
Let $\Phi(t,x)$ be a function on $\R^{1+2}$ and assume $\supp\Phi\subseteq\{(t,x)\colon |x|\le t+M\}$,
then there exists a constant $C>0$ such that for $\nu\in (-\infty,1)$
\begin{equation}\label{weight-pointwise}
    |\sigma_-^{\nu-1}(t,\cdot)\Phi(t,\cdot)|_\infty \le C|\sigma_-^{\,\nu}
        (t,\cdot)\nabla\Phi(t,\cdot)|_\infty
\end{equation}
and for $\ell\neq 1$
\begin{equation}\label{weight-L2}
    \|\sigma_-^{-\ell}(t,\cdot)\Phi(t,\cdot)\| \le
        C(t+M)^{(1-\ell)_+}\,\|\nabla\Phi(t,\cdot)\|,
\end{equation}
where $(1-\ell)_+=\max\,\{1-\ell, 0\}$ and $\ell\in\R$.
\end{lemma}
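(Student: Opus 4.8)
The plan is to obtain both inequalities from one elementary device: fix a direction $\omega=x/|x|$ and integrate $\Phi$ radially inward starting from the edge of its support. Writing $r=|x|$ and $N=t+M$, the hypothesis $\supp\Phi(t,\cdot)\subseteq\{|x|\le N\}$ gives the representation $\Phi(t,r\omega)=-\int_r^{+\infty}\omega\cdot\nabla\Phi(t,s\omega)\,ds$ (valid for $r\le N$; the integrand vanishes for $s>N$, and $\Phi(t,r\omega)=0$ for $r>N$), and along a fixed ray $\sigma_-(t,s\omega)=\sqrt{1+(s-t)^2}$ depends on $s$ alone.

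For \eqref{weight-pointwise} I would estimate, for $r\le N$, $|\Phi(t,r\omega)|\le|\sigma_-^{\,\nu}(t,\cdot)\nabla\Phi(t,\cdot)|_\infty\int_r^{N}(1+(s-t)^2)^{-\nu/2}\,ds$, so that the claim reduces to the one-variable bound $\sup\{(1+b^2)^{(\nu-1)/2}\int_b^M(1+a^2)^{-\nu/2}\,da:\ -\infty<b\le M\}<\infty$ (after the substitutions $a=s-t$, $b=r-t$, using $N-t=M$). This supremum is finite exactly because $\nu<1$: on the compact set $b\in[-1,M]$ the integrand is continuous, while as $b\to-\infty$ the integral grows like $|b|^{1-\nu}$, which is precisely cancelled by the prefactor $(1+b^2)^{(\nu-1)/2}\sim|b|^{\nu-1}$.

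For \eqref{weight-L2} I would split on the sign of $\ell$. If $\ell\le0$ the weight is harmless: on $\{|x|\le N\}$ one has $|r-t|\le N$, hence $\sigma_-^{-\ell}(t,x)=\sigma_-^{|\ell|}(t,x)\ls N^{-\ell}$ there, and combining this with the elementary radial Poincar\'e inequality $\|\Phi(t,\cdot)\|\ls N\|\nabla\Phi(t,\cdot)\|$ (another consequence of the representation above, via Cauchy--Schwarz and integration in polar coordinates) yields $\|\sigma_-^{-\ell}\Phi\|\ls N^{1-\ell}\|\nabla\Phi\|$, with $(1-\ell)_+=1-\ell$. If $\ell>0$ and $\ell\ne1$, I would instead introduce the radial vector field $V(x)=v(r)\,x/|x|$ with $v(r)=r^{1-d}\int_0^r s^{d-1}(1+(s-t)^2)^{-\ell}\,ds$, so that $\opdiv V=(1+(r-t)^2)^{-\ell}=\sigma_-^{-2\ell}(t,x)$ and $V$ is smooth. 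Writing $\int\sigma_-^{-2\ell}|\Phi|^2\,dx=\int(\opdiv V)|\Phi|^2\,dx$, integrating by parts (no boundary term, $\Phi$ being compactly supported), and applying Cauchy--Schwarz gives $\|\sigma_-^{-\ell}\Phi\|^2\le 4\int|V|^2\sigma_-^{2\ell}|\nabla\Phi|^2\,dx$ (the left integral is finite since $\sigma_-\ge1$). So everything reduces to the pointwise bound $|V(x)|\,\sigma_-^{\ell}(t,x)\ls N^{(1-\ell)_+}$ on $\{|x|\le N\}$. Using $s^{d-1}\le r^{d-1}$ gives $|v(r)|\le\int_{-t}^{r-t}(1+a^2)^{-\ell}\,da$, and then a short case analysis according to whether $r\ge t$ or $r<t$, together with the estimates $\int_0^N(1+a^2)^{-\ell}\,da\ls N^{(1-2\ell)_+}$ and $\int_{|r-t|}^{\infty}(1+a^2)^{-\ell}\,da\ls|r-t|^{1-2\ell}$ for $\ell>\tfrac12$, closes the argument: the right-hand side comes out bounded when $\ell>1$ and $\ls N^{1-\ell}$ when $0<\ell<1$.

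The one genuine obstacle I anticipate is obtaining the \emph{sharp} exponent $N^{1-\ell}$ in the range $0<\ell<1$: the bare Poincar\'e inequality only delivers the exponent $1$, and to gain the extra $\ell$ one must exploit that $\sigma_-^{-2\ell}$, although bounded by $1$, is concentrated near the light cone $r=t$ — which is exactly what the weighted one-dimensional integrals above quantify. The borderline exponent $\ell=\tfrac12$ needs slightly more care (a logarithm appears in $\int_{|r-t|}^{t}(1+a^2)^{-1/2}\,da$, but it is absorbed by the gain in the $\sigma_-^{\ell}$ factor); every remaining estimate is routine one-variable calculus.
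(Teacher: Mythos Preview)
Your argument is correct. For the pointwise bound \eqref{weight-pointwise} you do exactly what the paper does: write $\Phi$ as a radial integral from the edge of the support and reduce to a one-variable estimate with the substitution $a=s-t$. For the $L^2$ bound \eqref{weight-L2}, however, your route is genuinely different. The paper applies Cauchy--Schwarz directly to the radial representation, obtaining $|\Phi(t,r\omega)|^2\le\big(\int_r^{t+M}|\nabla\Phi|^2\xi\,d\xi\big)\big(\int_r^{t+M}\xi^{-1}\,d\xi\big)$, and then computes the single integral $\int_0^{t+M}r\log\frac{t+M}{r}\,(t+2M-r)^{-2\ell}\,dr$ by splitting at $r=(t+M)/2$; this handles all $\ell\neq1$ in one stroke with no case distinction on the sign of $\ell$. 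You instead split off $\ell\le0$ by a crude weight bound plus Poincar\'e, and for $\ell>0$ build the radial vector field $V$ with $\opdiv V=\sigma_-^{-2\ell}$ and run a Hardy-type integration by parts, reducing the problem to the pointwise estimate $|V|\sigma_-^{\ell}\ls N^{(1-\ell)_+}$. Your approach is more structural (it isolates exactly the mechanism --- a weighted Hardy inequality --- behind the gain of $\ell$ in the exponent) and generalises more readily to other weights or higher dimensions; the paper's computation is shorter and avoids the case analysis in $\ell$ that you must carry out (including the borderline $\ell=\tfrac12$, which you correctly flag and which does go through after optimising $B^{1/2}\log(T/B)$ over $B$).
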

\begin{proof}
For the purpose of completeness, we prove \eqref{weight-pointwise}-\eqref{weight-L2} here.
In fact, for the proof of \eqref{weight-L2}, one can also see \cite[Lemma~2.2]{Alinhac93}.
By introducing the polar coordinate $(r,\phi)$ such that $x_1=r\cos\phi$ and $x_2=r\sin\phi$,
we then have
\begin{align}
    \Phi(t,x)&=\Phi(t,r\cos\phi,r\sin\phi)=-\int_r^{t+M} \frac{d}{d\xi}\,
        \Phi(t,\xi\cos\phi,\xi\sin\phi) \,d\xi \no\\
    &=-\int_r^{t+M} (\cos\phi\,\p_1\Phi(t,\xi\cos\phi,\xi\sin\phi)+
        \sin\phi\,\p_2\Phi(t,\xi\cos\phi,\xi\sin\phi) ) \,d\xi. \label{2.14}
\end{align}
Together with the mean value theorem, this yields
\begin{equation*}
    \Phi(t,x) \ls |\sigma_-^{\,\nu}(t,\cdot)\nabla\Phi(t,\cdot)|_\infty
        \int_r^{t+M} (1+|t-\xi|)^{-\nu} \,d\xi,
\end{equation*}
which immediately derives \eqref{weight-pointwise}.
On the other hand, applying Cauchy-Schwartz inequality to \eqref{2.14} derives
\begin{equation*}
    |\Phi(t,r\cos\phi,r\sin\phi)|^2 \le \left( \int_r^{t+M}
        |\nabla\Phi(t,\xi\cos\phi,\xi\sin\phi)|^2 \,\xi\,d\xi\right)
        \left( \int_r^{t+M} \frac1\xi \,d\xi\right),
\end{equation*}
which yields
\begin{align}
    \left\|\frac{\Phi(t,\cdot)}{(t+2M-r)^\ell}\right\|^2
    &= \int_0^{2\pi}\int_0^{t+M} \frac{|\Phi(t,r\cos\phi,r\sin\phi)|^2}
        {(t+2M-r)^{2\ell}} \,r\,drd\phi \no\\
    &\le \int_0^{2\pi}\int_0^{t+M} \frac{r\int_r^{t+M} \frac1\xi \,d\xi}
        {(t+2M-r)^{2\ell}} \,dr \int_r^{t+M} |\nabla\Phi(t,\xi\cos\phi,
        \xi\sin\phi)|^2 \,\xi\,d\xi d\phi \no\\
    &\ls \int_0^{t+M} \frac{r\log\frac{t+M}r}{(t+2M-r)^{2\ell}} \,dr
        \,\|\nabla\Phi(t,\cdot)\|^2. \label{2.15}
\end{align}
On the other hand, it follows from direct computation that
\begin{equation}\label{2.16}
    \int_0^\frac{t+M}{2}\, \frac{r\log\frac{t+M}r}{(t+2M-r)^{2\ell}} \,dr
        \ls \frac1{(t+M)^{2\ell}} \int_0^\frac{t+M}{2}\, r\log\frac{t+M}r \,dr
        \ls (t+M)^{2(1-\ell)}
\end{equation}
and
\begin{equation}\label{2.17}
    \int_\frac{t+M}{2}^{t+M}\, \frac{r\log\frac{t+M}r}{(t+2M-r)^{2\ell}}\,dr
        =\int_0^\frac{t+M}{2}\, \frac{(t+M-\xi)\log\frac{t+M}{t+M-\xi} }{(M+\xi)^{2\ell}} \,d\xi
        \le \int_0^\frac{t+M}{2} \,\frac\xi{(M+\xi)^{2\ell}} \,d\xi,
\end{equation}
where we have used the fact of $\frac{t+M}{t+M-\xi}=1+\frac\xi{t+M-\xi} \le e^\frac\xi{t+M-\xi}$
in the last inequality.

Substituting \eqref{2.16}-\eqref{2.17} into \eqref{2.15} and taking direct computation
yield \eqref{weight-L2}.
Thus, the proof of Lemma~\ref{lem-weight} is completed.
\end{proof}


\section{Proof of Theorem 1.1.}\label{section3}


Throughout this section, we will always assume that $\mathcal{E}_4[\theta,u](t) \le K_1\ve$ holds,
where the definition of $\mathcal{E}_4[\theta,u](t)$ has been given in \eqref{energy1} and \eqref{energy+}.
Together with the standard Sobolev embedding theorem, this yields
\begin{equation}\label{3.1}
    |(\theta,u)(t,\cdot)|_\infty+(1+t)^\la
        |\p\p^{\le1}(\theta,u)(t,\cdot)|_\infty \ls K_1\ve.
\end{equation}
To prove Theorem~\ref{thm1}, we now carry out the following parts.

\subsection{Estimates of velocity $u$ and vorticity $w$.}

The following lemma is an application of Lemma~\ref{lem-divcurl} and \eqref{3.1}.
\begin{lemma}\label{lem-velocity1}
Under assumption \eqref{3.1}, for all $t>0$, one has
\begin{equation}\label{3.2}
    \mathcal{E}_4[u](t)\ls \|u(t,\cdot)\|+(1+t)^\la
        \left(\|\p^{\le3} w(t,\cdot)\|+\|\p\p^{\le3}\theta(t,\cdot)\|\right),
\end{equation}
where the definition of $w$ has been given in \eqref{vorticity-def}.
\end{lemma}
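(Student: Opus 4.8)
The plan is to read off from the definition \eqref{energy1} that $\mathcal{E}_4[u](t)=\|u(t,\cdot)\|+(1+t)^\la\sum_{1\le j+|\al|\le4}\|\p_t^j\nabla^\al u(t,\cdot)\|$. The term $\|u(t,\cdot)\|$ already sits on the right-hand side, so it is enough to estimate each $(1+t)^\la\|\p_t^j\nabla^\al u\|$ with $1\le j+|\al|\le4$ by the right-hand side, allowing an extra summand $CK_1\ve\,\mathcal{E}_4[u](t)$ that can be absorbed into the left once $\ve$ is small (recall $\mathcal{E}_4[u](t)\le\mathcal{E}_4[\theta,u](t)\le K_1\ve$ is finite a priori). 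I would then split into two cases: terms with at least one spatial derivative, handled by Lemma~\ref{lem-divcurl}, and pure time derivatives, handled by the momentum equation in \eqref{euler-reform}; in both, the nonlinear remainders are dealt with by the pointwise smallness \eqref{3.1}.

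\emph{Terms with $|\al|\ge1$.} Pick $i$ with $\al_i\ge1$ and write $\p_t^j\nabla^\al u=\p_i V$ with $V=\p_t^j\nabla^{\al-e_i}u$, which for each fixed $t$ is compactly supported in $x$ by finite speed of propagation; hence $\|\p_t^j\nabla^\al u\|\le\|\nabla V\|\le\|\opcurl V\|+\|\opdiv V\|$ by Lemma~\ref{lem-divcurl}. Since $\opcurl$ and $\opdiv$ commute with $\p_t$ and $\nabla$, $\opcurl V=\p_t^j\nabla^{\al-e_i}w$ and $\opdiv V=\p_t^j\nabla^{\al-e_i}(\opdiv u)$, both of order $\le3$; the first is bounded by $\|\p^{\le3}w\|$, which is on the right-hand side. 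For the second, the first equation of \eqref{euler-reform} gives $\opdiv u=-\frac1{1+(\g-1)\theta}\big(\p_t\theta+u\cdot\nabla\theta\big)$, where $1+(\g-1)\theta\ge\tfrac12$ by \eqref{3.1}. Differentiating and using Leibniz's rule, the term in which all derivatives fall on $\p_t\theta$ and the prefactor is undifferentiated contributes a space-time derivative of $\theta$ of order $\le4$ carrying at least one $\p_t$, hence is controlled by $\|\p\p^{\le3}\theta\|$; every other term is a product in which one factor (a derivative of $\theta$ or $u$ of order $\le2$, or $u$ itself, or a derivative of the rational prefactor in $\theta$) is bounded in $L^\infty$ by $CK_1\ve$ or $CK_1\ve(1+t)^{-\la}$ via \eqref{3.1}, the other factor being bounded in $L^2$ by $C\|\p\p^{\le3}\theta\|$ or $C(1+t)^{-\la}\mathcal{E}_4[u](t)$. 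Multiplying through by $(1+t)^\la$, such products are $\ls CK_1\ve\big((1+t)^\la\|\p\p^{\le3}\theta\|+\mathcal{E}_4[u](t)\big)$, i.e. $CK_1\ve$ times the right-hand side plus an absorbable term.

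\emph{Terms with $|\al|=0$.} I would argue by induction on $j=1,\dots,4$, using the second equation of \eqref{euler-reform} in the form $\p_t u=-\frac\mu{(1+t)^\la}u-u\cdot\nabla u-\nabla\theta$ and differentiating $j-1$ times in $t$. The term $\p_t^{j-1}\nabla\theta$ is a $\theta$-derivative of order $j$ with a spatial derivative, hence $\ls\|\p\p^{\le3}\theta\|$. Differentiating $u\cdot\nabla u$ produces $\p_t^a u\cdot\nabla\p_t^b u$ with $a+b=j-1$; since $\nabla\p_t^b u$ always carries a spatial derivative it is covered by the previous case, and distributing $L^\infty/L^2$ norms with \eqref{3.1} applied to the lower-order factor bounds each such product by $CK_1\ve(1+t)^{-\la}\mathcal{E}_4[u](t)$. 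Differentiating $\frac\mu{(1+t)^\la}u$ yields a sum of $c_k(1+t)^{-\la-k}\p_t^{j-1-k}u$; the $k=j-1$ piece is $\ls(1+t)^{-\la}\|u\|$, and the remaining pieces are strictly lower-order time derivatives of $u$, already estimated by the induction hypothesis (note $(1+t)^\la\cdot\frac\mu{(1+t)^\la}\|u\|=\mu\|u\|$, which matches the unweighted $\|u\|$ on the right). Collecting, $(1+t)^\la\|\p_t^j u\|\ls\|u\|+(1+t)^\la(\|\p^{\le3}w\|+\|\p\p^{\le3}\theta\|)+CK_1\ve\,\mathcal{E}_4[u](t)$.

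Summing the two cases over all $j+|\al|\le4$ and adding $\|u\|$ gives $\mathcal{E}_4[u](t)\ls\|u(t,\cdot)\|+(1+t)^\la(\|\p^{\le3}w(t,\cdot)\|+\|\p\p^{\le3}\theta(t,\cdot)\|)+CK_1\ve\,\mathcal{E}_4[u](t)$, and choosing $\ve$ small so that $CK_1\ve\le\tfrac12$ and absorbing the last term finishes the proof. The main obstacle I expect is the bookkeeping rather than any single estimate: one must verify that after commuting $\opcurl/\opdiv$ through the derivatives and after every substitution from \eqref{euler-reform}, each nonlinear remainder genuinely comes paired with a factor bounded by $K_1\ve$ through \eqref{3.1} (so that nothing like $\|\theta\|$, absent from the claimed right-hand side, survives), and that the weights $(1+t)^\la$ balance on both sides — which is precisely what the weight built into \eqref{energy1} and the decay of $\frac\mu{(1+t)^\la}$ are designed to ensure.
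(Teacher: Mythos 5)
Your proposal is correct and follows essentially the same route as the paper: apply the div–curl inequality of Lemma~\ref{lem-divcurl} to derivatives of $u$, replace $\opdiv u$ and $\p_t u$ by means of the two equations in \eqref{euler-reform}, bound the nonlinear remainders pointwise via \eqref{3.1}, and absorb the resulting $K_1\ve\,\mathcal{E}_4[u](t)$ term for small $\ve$. The only cosmetic difference is that the paper sums the momentum-equation estimate \eqref{3.6} over all mixed derivatives $\p^\al$ rather than inducting only on pure time derivatives, which amounts to the same bookkeeping.
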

\begin{proof}
By the equations in \eqref{euler-reform}, we see that
\begin{align}
    \opdiv u &= -\frac{\p_t\theta+u\cdot\nabla\theta}{1+(\g-1)\theta}, \label{3.3} \\
    \p_tu &= -\left(u\cdot\nabla u+\frac\mu{(1+t)^\la}u+\nabla\theta\right). \label{3.4}
\end{align}
Taking $U=\p^\al u$ with $|\al|\le3$ in \eqref{divcurl-estimate}, we then arrive at
\begin{align}
    \|\nabla\p^{\le3}& u(t,\cdot)\|
    \ls \|\p^{\le3} w(t,\cdot)\|+\|\p^{\le3}\opdiv u(t,\cdot)\| \no\\
    &\ls \|\p^{\le3} w(t,\cdot)\|+\|\p_t\p^{\le3}\theta(t,\cdot)\|+ K_1\ve \left( \|\nabla\p^{\le3}\theta(t,\cdot)\|+
        (1+t)^{-\la}\|\p^{\le3} u(t,\cdot)\|\right), \label{3.5}
\end{align}
where we have used \eqref{3.1} and \eqref{3.3} in the last inequality.
Taking the $L^2$ norm of $\p^\al$\eqref{3.4} yields
\begin{align}
    \|\p_t\p^\al u(t,\cdot)\|
    &\ls \|\nabla\p^\al\theta(t,\cdot)\|+(1+K_1\ve)(1+t)^{-\la}
        \|\p^{\le\al} u(t,\cdot)\| +K_1\ve \|\nabla\p^{\le\al} u(t,\cdot)\|. \label{3.6}
\end{align}
Rewrite $\p^\al=\p_t^k\p_x^\beta$ with $0\le k+|\beta|\le3$.
Summing up \eqref{3.6} from $k=0$ to $k=3$ yields
\begin{align}
    \|\p_t\p^{\le3} u(t,\cdot)\|
    &\ls \|\nabla\p^{\le3}\theta(t,\cdot)\|+(1+t)^{-\la}\|u(t,\cdot)\|
     +K_1\ve \|\nabla\p^{\le3} u(t,\cdot)\|. \label{3.7}
\end{align}
By the smallness of $\ve>0$, we immediately derive \eqref{3.3} from \eqref{3.5} and \eqref{3.7}.
This completes the proof of Lemma~\ref{lem-velocity1}.
\end{proof}

The following lemma shows the estimate of velocity $u$ itself.
\begin{lemma}\label{lem-velocity2}
Let $\mu>0$. Under assumption \eqref{3.1}, for all $t>0$, it holds that
\begin{equation}\label{3.8}
    \frac{d}{dt}\|(\theta,u)(t,\cdot)\|^2+\frac\mu{(1+t)^\la}\|u(t,\cdot)\|^2
    \ls (1+t)^\la|\theta(t,\cdot)|_\infty\|\nabla\theta(t,\cdot)\|^2.
\end{equation}
\end{lemma}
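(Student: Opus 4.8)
The plan is to derive \eqref{3.8} from a plain $L^2$ energy identity for the symmetric form \eqref{euler-reform}, using the damping term to soak up all error contributions. First I would multiply the $\theta$-equation in \eqref{euler-reform} by $\theta$, take the scalar product of the $u$-equation with $u$, integrate each over $\R^d$ and add, which gives
\[
\tfrac12\tfrac{d}{dt}\|(\theta,u)(t,\cdot)\|^2+\tfrac{\mu}{(1+t)^\la}\|u(t,\cdot)\|^2+\int_{\R^d}\big(\nabla\theta\cdot u+\theta\opdiv u\big)dx+\int_{\R^d}\theta\,u\cdot\nabla\theta\,dx+(\g-1)\int_{\R^d}\theta^2\opdiv u\,dx+\int_{\R^d}u\cdot(u\cdot\nabla)u\,dx=0.
\]
Then I would integrate by parts: the linear coupling vanishes, $\int_{\R^d}(\nabla\theta\cdot u+\theta\opdiv u)\,dx=\int_{\R^d}\opdiv(\theta u)\,dx=0$, while $\int_{\R^d}\theta\,u\cdot\nabla\theta\,dx=-\tfrac12\int_{\R^d}\theta^2\opdiv u\,dx$ and $\int_{\R^d}u\cdot(u\cdot\nabla)u\,dx=-\tfrac12\int_{\R^d}|u|^2\opdiv u\,dx$ (the solution has compact $x$-support for each fixed $t$ by finite propagation speed, so no boundary terms arise). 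Collecting everything and multiplying by $2$, this reduces \eqref{3.8} to controlling the right-hand side of
\[
\tfrac{d}{dt}\|(\theta,u)(t,\cdot)\|^2+\tfrac{2\mu}{(1+t)^\la}\|u(t,\cdot)\|^2=(3-2\g)\int_{\R^d}\theta^2\opdiv u\,dx+\int_{\R^d}|u|^2\opdiv u\,dx.
\]

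Next I would estimate the two terms on the right. For $\int_{\R^d}|u|^2\opdiv u\,dx$, assumption \eqref{3.1} gives $|\opdiv u(t,\cdot)|_\infty\le C|\nabla u(t,\cdot)|_\infty\ls(1+t)^{-\la}K_1\ve$, hence $\big|\int_{\R^d}|u|^2\opdiv u\,dx\big|\ls(1+t)^{-\la}K_1\ve\,\|u(t,\cdot)\|^2$. For $\int_{\R^d}\theta^2\opdiv u\,dx$, one more integration by parts turns it into $-2\int_{\R^d}\theta\,u\cdot\nabla\theta\,dx$, and Cauchy--Schwarz followed by Young's inequality (splitting $|\theta|_\infty\|u\|\|\nabla\theta\|$ as $(1+t)^{-\la/2}\|u\|$ against $(1+t)^{\la/2}|\theta|_\infty\|\nabla\theta\|$) bounds it by $\tfrac{\mu}{2}(1+t)^{-\la}\|u(t,\cdot)\|^2+C(1+t)^\la|\theta(t,\cdot)|_\infty^2\|\nabla\theta(t,\cdot)\|^2$. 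Absorbing the $\|u\|^2$ pieces into $\tfrac{2\mu}{(1+t)^\la}\|u\|^2$---legitimate once $\ve$ is small enough that $CK_1\ve\le\tfrac\mu2$---and using $|\theta(t,\cdot)|_\infty^2\le|\theta(t,\cdot)|_\infty$ (valid since $|\theta(t,\cdot)|_\infty\ls K_1\ve\le1$) would give exactly \eqref{3.8}.

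I do not expect a genuine obstacle: this is a routine energy estimate. The only things needing care are the bookkeeping of the integration-by-parts cancellations---checking that the linear $\theta$--$u$ coupling cancels and that every truly nonlinear term carries a factor $\opdiv u$, hence a small factor $K_1\ve(1+t)^{-\la}$---and the choice of the Young split so that the surviving error appears with the growing weight $(1+t)^{+\la}$ rather than $(1+t)^{-\la}$. The hypothesis $\mu>0$ is used precisely so that the damping term is available to absorb the leftover $(1+t)^{-\la}\|u\|^2$ contributions; if $\mu=0$ one would only obtain an estimate of the weaker Gr\"onwall type.
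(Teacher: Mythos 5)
Your proposal is correct and follows essentially the same route as the paper: the paper dots the $u$-equation with $u$, uses the $\theta$-equation to rewrite $u\cdot\nabla\theta$ (its identity (3.10)), and arrives at exactly your energy identity (its (3.11)), after which the damping absorbs the $(1+t)^{-\la}\|u\|^2$ pieces and a Young split leaves the $(1+t)^{\la}|\theta|_\infty\|\nabla\theta\|^2$ term. Your multiplication of the $\theta$-equation by $\theta$ is the same computation up to integration by parts, so there is no substantive difference.
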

\begin{proof}
Multiplying the second equation in \eqref{euler-reform} by $u$ derives
\begin{equation}\label{3.9}
  \frac12\,\p_t|u|^2+\frac\mu{(1+t)^\la}|u|^2+u\cdot\nabla\theta=
    -\frac12\,u\cdot\nabla|u|^2.
\end{equation}
From the first equation in \eqref{euler-reform}, we see that
\begin{align}
    u\cdot\nabla\theta &= \opdiv(\theta u)-\theta\opdiv u \no\\
    &= \opdiv(\theta u)+\theta(\p_t\theta+u\cdot\nabla\theta+
        (\g-1)\theta\opdiv u) \no\\
    &= \opdiv(\theta u+(\g-1)\theta^2 u)+\frac12\,\p_t|\theta|^2+
        (3-2\g)\theta\,u\cdot\nabla\theta.  \label{3.10}
\end{align}
Substituting \eqref{3.10} into \eqref{3.9} and integrating it over $\R^d$ yield
\begin{align}
    &\quad \frac{d}{dt}\|(\theta,u)(t,\cdot)\|^2+\frac{2\mu}{(1+t)^\la}
        \|u(t,\cdot)\|^2 \no\\
    &\ls |\theta(t,\cdot)|_\infty\|u(t,\cdot)\|\,\|\nabla\theta(t,\cdot)\|
        +|\nabla u(t,\cdot)|_\infty\|u(t,\cdot)\|^2. \label{3.11}
\end{align}
Substituting \eqref{3.1} into \eqref{3.11} and applying $\mu>0$ and the smallness of $\ve$, we derive \eqref{3.8}.
This completes the proof of Lemma~\ref{lem-velocity2}.
\end{proof}

Next lemma shows the estimates of vorticity $w$ and its derivatives.
\begin{lemma}\label{lem-vorticity}
Let $\mu>0$. Under assumption \eqref{3.1}, for all $t>0$, it holds that
\begin{equation}\label{3.12}
    \frac{d}{dt}\|\p^{\le3} w(t,\cdot)\|^2+\frac\mu{(1+t)^\la}
        \|\p^{\le3} w(t,\cdot)\|^2
    \ls |\p^{\le1} w(t,\cdot)|_\infty\, \|\p^{\le3} w(t,\cdot)\|\,
        \|\p\p^{\le3} u(t,\cdot)\|.
\end{equation}
\end{lemma}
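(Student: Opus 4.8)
The plan is to run a standard energy estimate on the transport-type equations \eqref{2dvorticity-eqn} and \eqref{3dvorticity-eqn} for $w$, after differentiating them with $\p^\al$ for $|\al|\le 3$. First I would apply $\p^\al$ to the vorticity equation: in either dimension the equation has the schematic form
\[
\p_t w+\frac{\mu}{(1+t)^\la}w+u\cdot\nabla w+w\,\opdiv u=F(u,w),
\]
where $F\equiv 0$ for $d=2$ and $F=w\cdot\nabla u$ for $d=3$. Using Leibniz's rule, $\p^\al$ of this identity produces $\p_t\p^\al w+\frac{\mu}{(1+t)^\la}\p^\al w+u\cdot\nabla\p^\al w$ on the left, plus a commutator $[\p^\al,u\cdot\nabla]w$ and the differentiated lower-order terms $\p^\al(w\,\opdiv u)$ and $\p^\al F$ on the right. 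I would then multiply by $\p^\al w$, integrate over $\R^d$, and sum over $|\al|\le 3$. The term $\int u\cdot\nabla\p^\al w\,\p^\al w\,dx=-\frac12\int(\opdiv u)|\p^\al w|^2\,dx$ is controlled by $|\nabla u(t,\cdot)|_\infty\|\p^{\le3}w\|^2\ls K_1\ve(1+t)^{-\la}\|\p^{\le3}w\|^2$ via \eqref{3.1}, so by smallness of $\ve$ it can be absorbed into the good damping term $\frac{\mu}{(1+t)^\la}\|\p^{\le3}w\|^2$ on the left-hand side (which is why only $\mu>0$, not a lower bound, is needed).

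The remaining right-hand side contributions are all of the form $\int(\text{quadratic in }\p^{\le3}w,\p^{\le3}u)\cdot\p^{\le3}w\,dx$. The key point is the distribution of derivatives: in each product at most one factor carries the full three derivatives, and the other factor is placed in $L^\infty$ with at most one derivative, so that by Cauchy–Schwarz the bound $|\p^{\le1}w|_\infty\,\|\p^{\le3}w\|\,\|\p\p^{\le3}u\|$ (together with $|\p^{\le1}u|_\infty\,\|\p^{\le3}w\|^2$ and $|\p^{\le1}w|_\infty\,\|\p^{\le3}u\|\,\|\p^{\le3}w\|$) emerges; the latter two types are again absorbed using \eqref{3.1} and the smallness of $\ve$, as with the transport term. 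Concretely, for $\p^\al(w\,\opdiv u)$ one splits $\opdiv u$ via \eqref{3.3} to see it is $\nabla$-of-$(\theta,u)$-type and hence controlled by $\|\p\p^{\le3}u\|+\|\p\p^{\le3}\theta\|$ in $L^2$ when it carries the top derivatives, and by $|\p^{\le1}(\theta,u)|_\infty\ls K_1\ve(1+t)^{-\la}$ when it does not; the commutator $[\p^\al,u\cdot\nabla]w$ is a sum of terms $\p^\beta u\cdot\nabla\p^{\al-\beta}w$ with $1\le|\beta|\le3$, each again of the stated type; and for $d=3$ the extra term $\p^\al(w\cdot\nabla u)$ is handled identically. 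Collecting everything and dividing through by the common factor gives \eqref{3.12}.

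The only mildly delicate point — and the one I would be most careful about — is bookkeeping the derivative counts so that no term ends up with two factors each carrying more than one derivative (which would spoil the $L^\infty\times L^2\times L^2$ structure). Since $|\al|\le 3$, in any product $\p^\beta(\cdot)\,\p^{\al-\beta}(\cdot)$ one has $\min(|\beta|,|\al-\beta|)\le 1$, so Sobolev embedding ($H^2(\R^d)\hookrightarrow L^\infty$, $d\le 3$) places the lower-order factor in $L^\infty$ using at most $1+2=3$ derivatives, which is exactly the regularity available in $\mathcal{E}_4$; this is what forces the $|\p^{\le1}w|_\infty$ (rather than $|w|_\infty$) in the final bound. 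No genuine obstacle arises: the structure is the classical one for first-order symmetrizable transport equations, and the time-dependent damping only helps.
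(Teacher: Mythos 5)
Your strategy coincides with the paper's own proof: apply $\p^\al$ with $|\al|\le3$ to the vorticity equations \eqref{2dvorticity-eqn}--\eqref{3dvorticity-eqn}, multiply by $\p^\al w$, integrate over $\R^d$, place the lower-order factor of each quadratic term in $L^\infty$ via \eqref{3.1}, and absorb the absorbable pieces into the damping term using $\mu>0$ and the smallness of $\ve$; this is exactly \eqref{3.13}--\eqref{3.14}. Three bookkeeping points in your write-up need attention, though none changes the overall structure.

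First, since $\p^\al$ contains $\p_t$, commuting $\p^\al$ through the damping term produces the extra terms $\sum_{0<\beta\le\al}\p^\beta\bigl(\mu(1+t)^{-\la}\bigr)\p^{\al-\beta}w$, which your list of right-hand side contributions omits; the paper keeps them (first term on the right of \eqref{3.13} and of \eqref{3.14}) and absorbs them, together with the $|\p\p^{\le1}u|_\infty\|\p^{\le3}w\|^2$-type terms, into the gap between the coefficient $\frac{2\mu}{(1+t)^\la}$ one actually obtains and the $\frac{\mu}{(1+t)^\la}$ retained in \eqref{3.12}, using that they carry an additional factor $(1+t)^{-1}$. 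Second, the detour through \eqref{3.3} is unnecessary and, as proposed, proves a different estimate: when the top derivatives land on $\opdiv u$ (or on $\nabla u$ in the $3$-D stretching term, or on $u$ in the transport commutator), one bounds that factor in $L^2$ directly by $\|\p\p^{\le3}u\|$, which is precisely the norm in \eqref{3.12}; rewriting $\opdiv u$ by \eqref{3.3} would place $\|\p\p^{\le3}\theta\|$ on the right-hand side, which is not in the stated inequality (it would still be serviceable in the later continuity argument, but it is not \eqref{3.12}). Third, be precise about which factors go into $L^\infty$: every $u$-factor that actually occurs carries at least one derivative, so the quantity being absorbed is $|\p\p^{\le1}u|_\infty\ls K_1\ve(1+t)^{-\la}$; an undifferentiated factor $|u(t,\cdot)|_\infty$, which your ``$|\p^{\le1}u|_\infty$'' would allow, only satisfies $|u|_\infty\ls K_1\ve$ by \eqref{3.1}, with no $(1+t)^{-\la}$ decay, and could not be absorbed into the damping term when $\la>0$; likewise a term $|\p^{\le1}w|_\infty\|\p^{\le3}u\|\,\|\p^{\le3}w\|$ with an undifferentiated $\|u\|$ could not be ``absorbed'' at all, since the left side of \eqref{3.12} contains no norm of $u$ --- fortunately no such terms arise once the derivative count is done carefully, exactly as in \eqref{3.14}.
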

\begin{proof}
It follows from vorticity equation \eqref{2dvorticity-eqn}-\eqref{3dvorticity-eqn} that
for $|\al|\le3$,
\begin{align}
    & \p_t\p^\al w+\frac{\mu}{(1+t)^\la}\p^\al w+u\cdot\nabla\p^\al w \no\\
    &= -\sum_{0<\beta\le\al}\left[\p^\beta\left(\frac\mu{(1+t)^\la}\right)
        \p^{\al-\beta} w+\p^\beta u\cdot\nabla\p^{\al-\beta} w\right]
        -\p^\al(w\opdiv u-w\cdot\nabla u),\label{3.13}
\end{align}
here we point out that the last term $w\cdot\nabla u$ in \eqref{3.13} does not appear when $d=2$ .
Multiplying \eqref{3.13} by $\p^\al w$ and integrating it over $\R^d$ yield
\begin{align}
    &\quad \frac{d}{dt}\|\p^\al w(t,\cdot)\|^2+\frac{2\mu}{(1+t)^\la}
        \|\p^\al w(t,\cdot)\|^2 \no\\
    &\ls \frac1{(1+t)^{1+\la}}\|\p^\al w(t,\cdot)\|\,\|\p^{<\al}w(t,\cdot)\|+
        |\p\p^{\le1} u(t,\cdot)|_\infty \|\p^{\le\al}w(t,\cdot)\|^2 \no\\
    &\quad +|\p^{\le1}w(t,\cdot)|_\infty \|\p\p^{\le3} u(t,\cdot)\|\,
        \|\p^{\le\al}w(t,\cdot)\|. \label{3.14}
\end{align}
Note that when $\al=0$, the firs term $\|\p^\al w(t,\cdot)\|\,\|\p^{<\al}w(t,\cdot)\|$ in the right hand side of \eqref{3.14} does not appear.
Summing up \eqref{3.14} from $|\al|=0$ to $|\al|=3$ and applying \eqref{3.1}, $\mu>0$ and the smallness of $\ve$, we then obtain \eqref{3.12}.
This completes the proof of Lemma~\ref{lem-vorticity}.
\end{proof}
\begin{remark}\label{rmk3.1}
The proof of Lemma~\ref{lem-velocity2} and Lemma~\ref{lem-vorticity} only depends on \eqref{3.1}, $\mu>0$ and the smallness of $\ve$.
\end{remark}

\subsection{Estimates of $\theta$ and its derivatives.}

The next lemma shows the global estimates of $\theta$ and its derivatives for $t>t_0$,
where $t_0$ is defined in \eqref{critical-time}.
\begin{lemma}\label{lem-thetaglobal}
Let $0\le\la<1$, $\mu>0$. Under assumption \eqref{3.1}, for all $t>t_0$, it holds that
\begin{align}
    &\quad \mathcal{E}_4^2[\theta](t)
        +\int_{t_0}^t(1+s)^\la\|\p\p^{\le3}\theta(s,\cdot)\|^2\,ds \no\\
    & \ls \mathcal{E}_4^2[\theta](t_0)+K_1\ve\int_{t_0}^t\left((1+s)^{-\la}
        \|u(s,\cdot)\|^2+(1+s)^\la\|\p^{\le3} w(s,\cdot)\|^2\right)\,ds. \label{3.15}
\end{align}
\end{lemma}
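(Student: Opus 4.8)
The plan is to establish the weighted energy inequality \eqref{3.15} by a standard energy method applied to the damped wave equation \eqref{theta-eqn} for $\theta$ and its spatial-temporal derivatives $\p^\al\theta$ with $|\al|\le 3$, using the multiplier $(1+t)^\la\p_t\p^\al\theta$ together with a correction term, and then absorbing the nonlinear remainder $Q(\theta,u)$ by the a priori smallness \eqref{3.1}. First I would differentiate \eqref{theta-eqn} by $\p^\al$ for each $|\al|\le 3$, obtaining
\begin{equation*}
    \p_t^2\p^\al\theta+\frac\mu{(1+t)^\la}\p_t\p^\al\theta-\Delta\p^\al\theta
    =\p^\al Q(\theta,u)-\sum_{0<\beta\le\al}\p^\beta\Bigl(\frac\mu{(1+t)^\la}\Bigr)\p_t\p^{\al-\beta}\theta.
\end{equation*}
Multiplying by $(1+t)^\la\p_t\p^\al\theta$, integrating over $\R^d$, and summing over $|\al|\le 3$, the principal terms combine into $\frac{d}{dt}$ of a quantity comparable to $(1+t)^{2\la}\|\p\p^{\le 3}\theta(t,\cdot)\|^2$ (the $\p_t^2$ and $\Delta$ terms give the kinetic and gradient energies, the $\p_t$ of the weight $(1+t)^\la$ contributes a lower-order term controllable since $\la<1$), while the damping term produces the good term $\mu(1+t)^{2\la-\la}\|\p_t\p^{\le 3}\theta\|^2 = \mu(1+t)^{\la}\|\p_t\p^{\le 3}\theta\|^2$. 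To upgrade the $\p_t\p^{\le 3}\theta$ control into full $\p\p^{\le 3}\theta$ control (including spatial gradients), I would, as is classical for damped waves, add a small multiple of the multiplier $\frac{\mu}{(1+t)^\la}\,(1+t)^\la\p^\al\theta = \mu\,\p^\al\theta$ applied to the equation; this yields $\int|\nabla\p^\al\theta|^2$ in the good term at the cost of $\frac{d}{dt}$ of cross terms $\int\p_t\p^\al\theta\,\p^\al\theta$ that are absorbed into the energy, plus lower-order terms handled via $\la<1$ and the choice of $t_0$ in \eqref{critical-time} (which guarantees $\frac{\mu}{2}(1+t)^{1-\la}\ge 1$ for $t\ge t_0$, making the damping dominate). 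Integrating in time from $t_0$ to $t$ then gives \eqref{3.15} modulo the nonlinear and coupling terms.

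The remaining work is to bound the contribution of $(1+t)^\la\int\p^\al Q(\theta,u)\,\p_t\p^\al\theta\,dx$ and the lower-order $\p_t$-weight commutator terms. Expanding $Q=Q_1+Q_2$ via \eqref{Q1-def}--\eqref{Q2-def} and applying Leibniz, every term of $\p^{\le 3}Q$ is a product of at least two factors, each of which is $\theta$, $u$, or a derivative thereof, with total derivative count $\le 5$ but with at most one factor carrying more than two derivatives; I would place the high-order factor in $L^2_x$ and the low-order factors in $L^\infty_x$, using \eqref{3.1} to bound the latter by $K_1\ve(1+t)^{-\la}$ or $K_1\ve$. The quadratic terms involving $\theta$ (e.g. $(\g-1)\theta\Delta\theta$) give $\ls (1+t)^\la|\theta|_\infty\|\p\p^{\le 3}\theta\|^2\ls K_1\ve\,(1+t)^\la\|\p\p^{\le 3}\theta\|^2$, which is absorbed into the good term for small $\ve$; the terms involving $u$ or $\p u$ — such as $\frac{\mu}{(1+t)^\la}u\cdot\nabla\theta$, $u\cdot\nabla\p_t\theta$, $\p_t u\cdot\nabla\theta$, and the quadratic-in-$u$ terms $\p_iu_j\p_ju_i$, $|\opdiv u|^2$ — are handled by additionally invoking the velocity estimates: by \eqref{3.3}--\eqref{3.4}, $\p u$ is controlled by $\nabla\theta$, $w$, and $(1+t)^{-\la}u$, so after Cauchy–Schwarz in time these contribute precisely the terms $K_1\ve\int_{t_0}^t\bigl((1+s)^{-\la}\|u\|^2+(1+s)^\la\|\p^{\le 3}w\|^2\bigr)ds$ appearing on the right of \eqref{3.15}, together with $\theta$-terms that are reabsorbed. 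The weight-commutator terms $\sum_{0<\beta\le\al}\p^\beta\bigl(\frac\mu{(1+t)^\la}\bigr)\p_t\p^{\al-\beta}\theta$, when multiplied by $(1+t)^\la\p_t\p^\al\theta$, produce $\ls (1+t)^{-1}\|\p_t\p^{\le 3}\theta\|^2$, which is lower order relative to the good damping term $(1+t)^\la\|\p_t\p^{\le 3}\theta\|^2$ (since $\la\ge 0$, so $(1+t)^{-1}\ls (1+t)^\la$) and can be absorbed with a small constant.

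The main obstacle I anticipate is \textbf{not} the nonlinear estimates per se — those are routine given \eqref{3.1} — but the delicate bookkeeping required to close the inequality in the form \eqref{3.15}, namely ensuring that the only ``bad'' (non-absorbable, non-self-improving) terms on the right are exactly the two integrals $\int(1+s)^{-\la}\|u\|^2\,ds$ and $\int(1+s)^\la\|\p^{\le 3}w\|^2\,ds$, which will later be fed back in using Lemmas~\ref{lem-velocity2} and \ref{lem-vorticity}. In particular one must be careful that the $\nabla\theta$ portion of $\p u$ (arising when velocity factors are re-expressed via \eqref{3.3}--\eqref{3.4}) is genuinely absorbable into the $(1+s)^\la\|\p\p^{\le 3}\theta\|^2$ good term — i.e. always appears with a coefficient $K_1\ve$ or with enough decay — rather than producing an uncontrolled $(1+s)^\la\|\p\p^{\le 3}\theta\|^2$ contribution with an $O(1)$ constant. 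This is why the restriction $\la<1$ and the threshold time $t_0$ from \eqref{critical-time} are essential: for $t\ge t_0$ the damping strength $\frac{\mu}{(1+t)^\la}$ integrated in time still diverges, and the weight exponent $\la$ is subcritical so that all weight-derivative losses are genuinely lower order. Once the bookkeeping is organized, Grönwall is not even needed at this stage — \eqref{3.15} is a clean inequality whose right side will be closed against the velocity/vorticity estimates in a subsequent combined argument.
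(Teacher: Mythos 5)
Your overall strategy is the paper's: differentiate the damped wave equation \eqref{theta-eqn}, test with a combination $a(t)\p_t\p^\al\theta+b(t)\p^\al\theta$, use $\la<1$ and the choice of $t_0$ in \eqref{critical-time} for positivity of the energy and damping terms, re-express $\p u$ through \eqref{3.3}--\eqref{3.4} and the vorticity, and absorb everything carrying a $K_1\ve$ factor. However, there is a genuine gap at exactly the point you declare ``routine.'' After applying $\p^\al$ with $|\al|=3$, the quasilinear part of $Q_1$ in \eqref{Q1-def} contains the top-order terms $\theta\Delta\p^\al\theta$, $u\cdot\nabla\p_t\p^\al\theta$ and $u_iu_j\p^2_{ij}\p^\al\theta$, which carry \emph{five} derivatives of $\theta$; neither $\mathcal{E}_4[\theta]$ nor the dissipation $\int(1+s)^\la\|\p\p^{\le3}\theta\|^2ds$ controls five derivatives in $L^2$, so your plan of ``placing the high-order factor in $L^2_x$'' fails there, and the bound $\ls(1+t)^\la|\theta|_\infty\|\p\p^{\le3}\theta\|^2$ you state for $\theta\Delta\theta$ is not obtainable by Cauchy--Schwarz alone. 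The paper's proof spends most of its effort precisely here: one must integrate by parts in $x$ and $t$ against the full multiplier (the displayed identities preceding \eqref{3.23}--\eqref{3.24}), which generates time-boundary contributions of size $K_1\ve\,(\mathcal{E}_4^2[\theta](t)+\mathcal{E}_4^2[\theta](t_0))$ that must then be absorbed into the left side by smallness of $\ve$, plus commutator pieces estimated via \eqref{3.1}--\eqref{3.2}. Without this step the estimate does not close.

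Two further bookkeeping points. First, your multiplier weights are inconsistent: as literally written, $(1+t)^\la\p_t\p^\al\theta$ plus a small multiple of $\mu\,\p^\al\theta$ produces an energy comparable to $(1+t)^\la\|\p\p^\al\theta\|^2$ and a dissipation \emph{without} the $(1+s)^\la$ weight on $\|\nabla\p^\al\theta\|^2$, which is strictly weaker than \eqref{3.15} (recall $\mathcal{E}_4^2$ carries the weight $(1+t)^{2\la}$ by \eqref{energy1}); your arithmetic ``$\mu(1+t)^{2\la-\la}$'' shows you in fact intend $(1+t)^{2\la}\p_t\p^\al\theta$ and hence $(1+t)^\la\p^\al\theta$ for the cross multiplier, i.e.\ exactly the paper's choice $2(1+t)^{2\la}\p_t\p^\al\theta+\mu(1+t)^\la\p^\al\theta$ in \eqref{3.16}; this needs to be fixed for the conclusion to match the statement. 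Second, the claim that Gronwall is unnecessary is not justified: with the correct weights the exact-derivative bookkeeping leaves a lower-order term of the type $(1-\la)(1+s)^{\la-2}\|\theta(s)\|^2$, and since the dissipation contains no zero-order term this cannot be absorbed; the paper removes it by Gronwall, using that $(1+s)^{\la-2}$ is integrable for $\la<1$.
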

\begin{proof}
Acting $\p^\al$ with $|\al|\le3$ on both sides of equation \eqref{theta-eqn} shows
\begin{equation*}
    \p_t^2\p^\al\theta+\frac\mu{(1+t)^\la}\p_t\p^\al\theta-\Delta\p^\al\theta
    =\p^\al Q(\theta,u)+\sum_{\beta<\al} \p^{\al-\beta}\left(\frac\mu{(1+t)^\la}
        \right) \p_t\p^\beta\theta.
\end{equation*}
Multiplying this by $2(1+t)^{2\la}\p_t\p^\al\theta+\mu(1+t)^\la\p^\al\theta$ derives
\begin{align}
    &\p_t\left[(1+t)^{2\la}|\p\p^\al\theta|^2+\mu(1+t)^\la\p^\al\theta
        \p_t\p^\al\theta+\frac{\mu^2}2|\p^\al\theta|^2-\frac{\mu\la}{2}(1+t)^{\la-1}|\p^\al\theta|^2\right] \no\\
    & +\left[\mu(1+t)^\la-2\la(1+t)^{2\la-1}\right]|\p\p^\al\theta|^2-
        \opdiv\left[\nabla\p^\al\theta\left(2(1+t)^{2\la}\p_t\p^\al\theta
        +\mu(1+t)^\la\p^\al\theta\right) \right]\no\\
    &= \left(2(1+t)^{2\la}\p_t\p^\al\theta+\mu(1+t)^\la\p^\al\theta\right)
        \left(\p^\al Q(\theta,u)+\sum_{\beta<\al} \p^{\al-\beta}\left(
        \frac\mu{(1+t)^\la}\right) \p_t\p^\beta\theta\right) \no\\
    &\qquad +\frac{\mu\la(1-\la)}{2}(1+t)^{\la-2}|\p^\al\theta|^2. \label{3.16}
\end{align}
Thanks to $0\le\la<1$, $\mu>0$ and the choice of $t_0$ (see \eqref{critical-time}),
for all $t>t_0$ one easily knows that for the term in the first square bracket of the second line in \eqref{3.16},
\begin{equation}\label{3.17}
    \mu(1+t)^\la-2\la(1+t)^{2\la-1} \ge \mu(1-\la)(1+t)^\la.
\end{equation}
Furthermore, one gets that for the term in the square bracket of the first line in \eqref{3.16},
\begin{align*}
    (1+t)^{2\la}|\p\p^\al\theta|^2+\mu(1+t)^\la\p^\al\theta
        \p_t\p^\al\theta+\frac{\mu^2}{2}|\p^\al\theta|^2-\frac{\mu\la}{2}(1+t)^{\la-1}|\p^\al\theta|^2 \\
    =(1+t)^{2\la}\left(\frac{1-\la}{3-\la}|\p_t\p^\al\theta|^2+|\nabla\p^\al\theta|^2\right)
        +\frac{\mu^2(1-\la)}{8}|\p^\al\theta|^2 \\
    +\left((1+t)^\la\sqrt\frac{2}{3-\la}\p_t\p^\al\theta+\frac\mu2\sqrt\frac{3-\la}{2}\p^\al\theta\right)^2
        +\frac{\mu\la}{4}(\mu-2(1+t)^{\la-1})|\p^\al\theta|^2,
\end{align*}
which is equivalent to $(1+t)^{2\la}|\p\p^\al\theta|^2+|\p^\al\theta|^2$ for $0\le\la<1$ and $t>t_0$.
Consequently, integrating \eqref{3.16} over $[t_0,t]\times\R^d$ gives
\begin{align}
    &\quad (1+t)^{2\la}\|\p\p^\al\theta(t,\cdot)\|^2+\|\p^\al\theta(t,\cdot)\|^2
        +\int_{t_0}^t(1+s)^\la\|\p\p^\al\theta(s,\cdot)\|^2\,ds \no\\
    & \ls \mathcal{E}_4^2[\theta](t_0)+\int_{t_0}^t(1-\la)(1+s)^{\la-2}\|\theta(s,\cdot)\|^2\,ds
        +\int_{t_0}^t(1+s)^\la\|\p\p^{<\al}\theta(s,\cdot)\|^2\,ds \no\\
    & +\left|\int_{t_0}^t\int_{\R^d} \left(\p^\al Q_1(\theta,u)+\p^\al
        Q_2(\theta,u)\right) \left(2(1+s)^{2\la}\p_t\p^\al\theta+
        \mu(1+s)^\la\p^\al\theta\right) \,dxds\right|. \label{3.18}
\end{align}
Next we deal with the last term in the right hand side of \eqref{3.18}.
It follows from \eqref{3.1}-\eqref{3.2} and $|\al|\le3$ that
\begin{align}
    &\quad \left|\int_{t_0}^t\int_{\R^d} \p^\al Q_2(\theta,u) \left(2(1+s)^{2\la}
        \p_t\p^\al\theta+\mu(1+s)^\la\p^\al\theta\right) \,dxds\right| \no\\
    &\ls K_1\ve\int_{t_0}^t(1+s)^\la\|\p\p^{\le3}(\theta,u)(s,\cdot)\|^2\,ds \no\\
    &\ls K_1\ve\int_{t_0}^t(1+s)^\la\left(\|\p\p^{\le3}\theta(s,\cdot)\|^2
        +\|\p^{\le3} w(s,\cdot)\|^2\right)\,ds
        +K_1\ve\int_{t_0}^t (1+s)^{-\la}\|u(s,\cdot)\|^2\,ds. \label{3.19}
\end{align}
Now we turn our attention to the term $\p^\al Q_1(\theta,u)$. It is easy to get
\begin{align}
    \p^\al Q_1(\theta,u) \defeq -\p^\al\left(\frac\mu{(1+t)^\la}u\cdot\nabla\theta\right)+(\g-1)\theta\Delta\p^\al\theta \no\\
    -2u\cdot\nabla\p_t\p^\al\theta-\sum_{i,j=1}^d u_iu_j\p_{ij}^2\p^\al\theta+Q^\al_1(\theta,u). \label{3.20}
\end{align}
One easily checks that \eqref{3.19} still holds if $\p^\al Q_2(\theta,u)$ is replaced by $Q^\al_1(\theta,u)$.
In addition, for $\al=0$, we see that
\begin{align}
    &\quad \left|\int_{t_0}^t\int_{\R^d} \frac\mu{(1+s)^\la}u\cdot\nabla\theta
        \left(2(1+s)^{2\la}\p_t\theta+\mu(1+s)^\la\theta\right) \,dxds\right| \no\\
    &\ls \int_{t_0}^t \left(|\theta(s,\cdot)|_\infty \|u(s,\cdot)\|+(1+s)^\la
        |u(s,\cdot)|_\infty\|\p_t\theta(s,\cdot)\|\right) \|\nabla\theta(s,\cdot)\|\,ds \no\\
    &\ls K_1\ve\int_{t_0}^t \left( (1+s)^{-\la}\|u(s,\cdot)\|^2+(1+s)^\la
        \|\p\theta(s,\cdot)\|^2\right) \,ds, \label{3.21}
\end{align}
where we have used \eqref{3.1} again. If $\al>0$, by \eqref{3.2} we find that
\begin{align}
    &\quad \left|\int_{t_0}^t\int_{\R^d} \p^\al\left(\frac\mu{(1+s)^\la}u\cdot\nabla\theta\right)
        \left(2(1+s)^{2\la}\p_t\p^\al\theta+\mu(1+s)^\la\p^\al\theta\right) \,dxds\right| \no\\
    &\ls K_1\ve\int_{t_0}^t(1+s)^\la\left(\|\p\p^{\le3}\theta(s,\cdot)\|^2
        +\|\p^{\le3} w(s,\cdot)\|^2\right)\,ds+K_1\ve\int_{t_0}^t (1+s)^{-\la}\|u(s,\cdot)\|^2\,ds. \label{3.22}
\end{align}
On the other hand, direct computation derives the following identities
\begin{align*}
    (1+t)^{2\la}\theta\Delta\p^\al\theta\p_t\p^\al\theta=
    \opdiv\left[(1+t)^{2\la}\theta\nabla\p^\al\theta\p_t\p^\al\theta\right]
        -(1+t)^{2\la}\nabla\theta\cdot\nabla\p^\al\theta\p_t\p^\al\theta \\
    -\frac12\,\p_t\left[(1+t)^{2\la}\theta\,|\nabla\p^\al\theta|^2\right]
        +\la(1+t)^{2\la-1}\theta\,|\nabla\p^\al\theta|^2+\frac12(1+t)^{2\la}\p_t\theta\,|\nabla\p^\al\theta|^2
\end{align*}
and
\begin{equation*}
    (1+t)^\la\theta\Delta\p^\al\theta\p^\al\theta=\opdiv\left[(1+t)^\la
    \theta\nabla\p^\al\theta\p^\al\theta\right]-(1+t)^\la(\theta\,
    |\nabla\p^\al\theta|^2+\nabla\theta\cdot\nabla\p^\al\theta\p^\al\theta).
\end{equation*}
Integrating these two identities over $[t_0,t]\times\R^d$ yields
\begin{align}
    &\quad \left|\int_{t_0}^t\int_{\R^d} \theta\Delta\p^\al\theta \left(2
        (1+s)^{2\la}\p_t\p^\al\theta+\mu(1+s)^\la\p^\al\theta\right)\,dxds\right| \no\\
    & \ls K_1\ve\left(\mathcal{E}_4^2[\theta](t)+\mathcal{E}_4^2[\theta](t_0)\right)
        +K_1\ve\int_{t_0}^t(1+s)^\la\|\p\p^{\le3}\theta(s,\cdot)\|^2\,ds, \label{3.23}
\end{align}
where we have used \eqref{3.1} and $\la\le1$.

Analogously for the remaining items $u\cdot\nabla\p_t\p^\al\theta$
and $\ds\sum_{i,j=1}^du_iu_j\p_{ij}^2\p^\al\theta$ in $\p^\al Q_1(\theta,u)$ (see \eqref{3.20}),
direct computations show
\begin{align*}
    2(1+t)^{2\la}u\cdot\nabla\p_t\p^\al\theta\p_t\p^\al\theta=
    & \opdiv\left[(1+t)^{2\la}u\,|\p_t\p^\al\theta|^2\right]
    -(1+t)^{2\la}\opdiv u\,|\p_t\p^\al\theta|^2, \\
    (1+t)^\la u\cdot\nabla\p_t\p^\al\theta\p^\al\theta=
    & \opdiv\left[(1+t)^\la u\,\p_t\p^\al\theta\p^\al\theta\right]
    -(1+t)^\la\p_t\p^\al\theta(u\cdot\nabla\p^\al\theta+\opdiv u\,
        \p^\al\theta)
\end{align*}
and
\begin{align*}
    2(1+t)^{2\la}u_iu_j\p_{ij}^2\p^\al\theta\p_t\p^\al\theta=
    &\p_i\left[(1+t)^{2\la}u_iu_j\p_j\p^\al\theta\p_t\p^\al\theta\right]
        +\p_j\left[(1+t)^{2\la}u_iu_j\p_i\p^\al\theta\p_t\p^\al\theta\right] \\
    & -(1+t)^{2\la}\p_t\p^\al\theta\Big[\p_i(u_iu_j)\p_j\p^\al\theta+
        \p_j(u_iu_j)\p_i\p^\al\theta\Big] \\
    & -\p_t\left[(1+t)^{2\la}u_iu_j\p_i\p^\al\theta\p_j\p^\al\theta\right]
        +(1+t)^{2\la}\p_t(u_iu_j)\p_i\p^\al\theta\p_j\p^\al\theta \\
    & +2\la(1+t)^{2\la-1}u_iu_j\p_i\p^\al\theta\p_j\p^\al\theta, \\
        (1+t)^\la u_iu_j\p_{ij}^2\p^\al\theta\p^\al\theta=
    & \p_i\left[(1+t)^\la u_iu_j\p_j\p^\al\theta\p^\al\theta\right]
        -(1+t)^\la\p_j\p^\al\theta\p_i(u_iu_j\p^\al\theta).
\end{align*}
Then we have that
\begin{align}
    &\quad \left|\int_{t_0}^t\int_{\R^d} \left(2u\cdot\nabla\p_t\p^\al\theta
        +\sum_{i,j=1}^du_iu_j\p_{ij}^2\p^\al\theta\right) \left(2(1+s)^{2\la}\p_t\p^\al\theta
        +\mu(1+s)^\la\p^\al\theta\right) \,dxds\right| \no\\
    &\ls K_1\ve\left(\mathcal{E}_4^2[\theta](t)+\mathcal{E}_4^2[\theta](t_0)\right)
        +K_1\ve\int_{t_0}^t (1+s)^{-\la}\|u(s,\cdot)\|^2\,ds \no\\
    &\quad +K_1\ve\int_{t_0}^t(1+s)^\la\left(\|\p\p^{\le3}\theta(s,\cdot)\|^2
        +\|\p^{\le3} w(s,\cdot)\|^2\right)\,ds. \label{3.24}
\end{align}
Substituting \eqref{3.19}-\eqref{3.24} into \eqref{3.18} yields
\begin{align}
    &\quad (1+t)^{2\la}\|\p\p^\al\theta(t,\cdot)\|^2+\|\p^\al\theta(t,\cdot)\|^2
        +\int_{t_0}^t(1+s)^\la\|\p\p^\al\theta(s,\cdot)\|^2\,ds \no\\
    & \ls \mathcal{E}_4^2[\theta](t_0)+K_1\ve\,\mathcal{E}_4^2[\theta](t)
        +\int_{t_0}^t(1-\la)(1+s)^{\la-2}\|\theta(s,\cdot)\|^2\,ds+\int_{t_0}^t(1+s)^\la\|\p\p^{<\al}\theta(s,\cdot)\|^2\,ds \no\\
    &\quad +K_1\ve\int_{t_0}^t(1+s)^\la\left(\|\p\p^{\le3}\theta(s,\cdot)\|^2
        +\|\p^{\le3} w(s,\cdot)\|^2\right)\,ds +K_1\ve\int_{t_0}^t (1+s)^{-\la}\|u(s,\cdot)\|^2\,ds. \label{3.25}
\end{align}
Summing up \eqref{3.25} from $|\al|=0$ to $|\al|=3$ and applying Gronwall's inequality for $\la<1$ yield \eqref{3.15} provided that $\ve>0$ is small enough.
This completes the proof of Lemma~\ref{lem-thetaglobal}.
\end{proof}

\subsection{Proof of Theorem~\ref{thm1}.}
\begin{proof}[Proof  of Theorem~\ref{thm1}]
First, we assume that $\mathcal{E}_4[\theta,u](t) \le K_1\ve$ holds.
Multiplying \eqref{3.12} by $(1+t)^{2\la}$ yields
\begin{align}
    &\quad \frac{d}{dt}\|(1+t)^\la\p^{\le3}w(t,\cdot)\|^2+\left(\mu
        (1+t)^\la-2\la(1+t)^{2\la-1}\right)\|\p^{\le3}w(t,\cdot)\|^2 \no\\
    &\ls K_1\ve(1+t)^\la\|\p^{\le3}w(t,\cdot)\|\,\|\p\p^{\le3} u(t,\cdot)\|, \label{3.26}
\end{align}
where we have used \eqref{3.1}. In view of \eqref{3.17}, the second term on the first line in \eqref{3.26} is bounded below by $(1+t)^\la\|\p^{\le3}w(t,\cdot)\|^2$.
Integrating \eqref{3.8} and \eqref{3.26} over $[t_0,t]\times\R^d$ derives
\begin{equation}\label{3.27}
    \|(\theta,u)(t,\cdot)\|^2+\int_{t_0}^t (1+s)^{-\la}\|u(s,\cdot)\|^2\,ds
    \ls \|(\theta,u)(t_0,\cdot)\|^2+K_1\ve\int_{t_0}^t (1+s)^\la\|\nabla\theta(s,\cdot)\|^2\,ds
\end{equation}
and
\begin{align}
    &\quad \|(1+t)^\la\p^{\le3}w(t,\cdot)\|^2+\int_{t_0}^t (1+s)^\la\|\p^{\le3}w(s,\cdot)\|^2\,ds \no\\
    &\ls \|\p^{\le3}w(t_0,\cdot)\|^2+K_1\ve \int_{t_0}^t (1+s)^\la\|\p\p^{\le3} u(s,\cdot)\|^2\,ds. \label{3.28}
\end{align}
Collecting \eqref{3.15}, \eqref{3.27}-\eqref{3.28} with \eqref{3.2}, we
infer $\mathcal{E}_4[\theta,u](t) \le C_1\mathcal{E}_4[\theta,u](t_0)$.
It follows from the local existence of the hyperbolic system
that $\mathcal{E}_4[\theta,u](t_0) \le C_2\ve$.
Let $K_1=2C_1C_2$ and choose $\ve>0$ sufficiently small.
Then, we conclude that $\mathcal{E}_4[\theta,u](t) \le \frac12 K_1\ve$, which implies
that \eqref{euler-reform} admits a global solution $(\theta,u)$ for Case 1.
It follows from the definition of $\theta$ (i.e. \eqref{theta-def})
that there exists a global solution $(\rho,u)$ to \eqref{euler-eqn} for Case 1.

Next we show
\begin{equation}\label{vorticity-decay}
    \|\p^{\le3}w(t,\cdot)\| \ls \Xi(t)^{-\frac13} \,\ve,
\end{equation}
where $\Xi(t)=e^{\frac{\mu}{1-\la}[(1+t)^{1-\la}-1]}$ has been defined in \eqref{Xi-def}.

For this purpose, we assume that $\| \Xi(t)^\frac13 \p^{\le3}w(t,\cdot)\|\le K_2\ve$ holds for sufficiently large
constant $K_2>0$ and small $\ve>0$. This immediately implies
\begin{equation}\label{3.30}
\Xi(t)^\frac13 |\p^{\le1}w(t,\cdot)|_\infty \ls K_2\ve.
\end{equation}
Multiplying \eqref{3.12} by $ \Xi(t)^\frac23$ yields
\begin{align}
    &\frac{d}{dt} \|\Xi(t)^\frac13 \p^{\le3}w(t,\cdot)\|^2
        +\frac{\mu}{3(1+t)^\la} \,\|\Xi(t)^\frac13 \p^{\le3}w(t,\cdot)\|^2 \no\\
    &\ls \Xi(t)^\frac23 |\p^{\le1} w(t,\cdot)|_\infty\,
        \|\p^{\le3}w(t\cdot)\|\,\|\p\p^{\le3} u(t,\cdot)\|. \label{3.31}
\end{align}
Substituting \eqref{3.2} and \eqref{3.30} into \eqref{3.31} and applying Young's inequality, we then have
\begin{align}
    &\quad \frac{d}{dt} \|\Xi(t)^\frac13 \p^{\le3} w(t,\cdot)\|^2+
        \frac{\mu}{3(1+t)^\la}\|\Xi(t)^\frac13 \p^{\le3} w(t,\cdot)\|^2 \no\\
    &\ls \Xi(t)^\frac23 |\p^{\le1}w(t,\cdot)|_\infty \,\|\p^{\le3}w(t\cdot)\|
        \left((1+t)^{-\la}\|u(t,\cdot)\|+\|\p^{\le3}w(t\cdot)\|
        +\|\p\p^{\le3}\theta(t\cdot)\| \right) \no\\
    &\ls K_2\ve \left((1+t)^{-\la}\|u(t,\cdot)\|^2+\big((1+t)^{-\la}
        +\Xi(t)^{-\frac13}\big)\|\Xi(t)^\frac13 \p^{\le3} w(t,\cdot)\|^2
        +(1+t)^\la\|\p\p^{\le3}\theta(t,\cdot)\|^2 \right). \label{3.32}
\end{align}
Collecting \eqref{3.8}, \eqref{3.15}, \eqref{3.32} and applying the same argument as in the proof
for the global existence of $(\rho, u)$, we infer \eqref{vorticity-decay}.
This completes the proof of \eqref{vorticity-decay}. Thus the proof of Theorem~\ref{thm1} is completed.
\end{proof}

\begin{remark}\label{rmk3.2}
The proof of Theorem~\ref{thm1} can be applied to the case of $\la=1$, $\mu>2$.
In this case, Lemma~\ref{lem-velocity1}-\ref{lem-vorticity} still hold and the coefficient of
the first term in the second line of \eqref{3.16} is $(\mu-2)(1+t)$,
which plays the same role as \eqref{3.17}. Instead of the identity below \eqref{3.17}, we have
\begin{align*}
    (1+t)^2|\p\p^\al\theta|^2+\mu(1+t)\p^\al\theta
        \p_t\p^\al\theta+\frac{\mu(\mu-1)}{2}|\p^\al\theta|^2 \\
    =(1+t)^2\left(\frac{\mu-2}{3\mu-2}|\p_t\p^\al\theta|^2+|\nabla\p^\al\theta|^2\right)
        +\frac{\mu(\mu-2)}{8}|\p^\al\theta|^2 \\
    +\left((1+t)\sqrt\frac{2\mu}{3\mu-2}\p_t\p^\al\theta+\sqrt\frac{\mu(3\mu-2)}{8}\p^\al\theta\right)^2,
\end{align*}
which is equivalent to $(1+t)|\p\p^\al\theta|^2+|\p^\al\theta|^2$ for $\mu>2$.
However, we cannot obtain the exponential decay of the vorticty $w$.
\end{remark}


\section{Proof of Theorem 1.2.}\label{section4}


Theorem~\ref{thm2} in three space dimensions has been proved in \cite{HWY15}.
In this section, we fix $d=2$ and assume that
\begin{equation}\label{4.1}
    E_5[\theta,u](t) \le K_3\ve
\end{equation}
holds. By the finite propagation speed property of hyperbolic systems,
one easily knows that $(\theta, u)$ and their derivatives are supported in $\{(t,x)\colon |x|\le t+M\}$,
which implies that for $\al>0$,
\begin{equation}\label{4.2}
    |Z^\al(\theta,u)(t,x)| \ls (1+t) |\p Z^{<\al}(\theta,u)(t,x)|.
\end{equation}
On the other hand, collecting \eqref{Klainerman-ineq}-\eqref{weight-pointwise} with assumption \eqref{4.1} derives
the following pointwise estimate
\begin{equation}\label{4.3}
    |\sigma_-^{-\frac12}(t,\cdot)Z^{\le2}(\theta,u)(t,\cdot)|_\infty+
    |\sigma_-^\frac12(t,\cdot)\p Z^{\le2}(\theta,u)(t,\cdot)|_\infty
    \ls \frac{K_3\ve}{1+t}.
\end{equation}
To prove Theorem~\ref{thm2} for $d=2$, we shall focus on the following parts.

\subsection{Estimates of velocity $u$ and its derivatives.}

The following lemma is an application of Lemma~\ref{lem-divcurl} and \eqref{4.2}-\eqref{4.3}.
\begin{lemma}\label{lem-Zvelocity}
Under assumption \eqref{4.3}, for all $t\ge0$, it holds that
\begin{equation}\label{4.4}
    E_5[u](t)\ls (1+t)^{-\frac12}\|u(t,\cdot)\|+(1+t)^\frac12\|\p Z^{\le4}\theta(t,\cdot)\|.
\end{equation}
\end{lemma}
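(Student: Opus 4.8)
The plan is to follow the same divergence–curl strategy as in Lemma~\ref{lem-velocity1}, but now with the Klainerman vector fields $Z$ in place of ordinary derivatives, and to exploit the hypothesis $\opcurl u_0\equiv0$, which propagates to $\opcurl u\equiv0$ for all $t\ge0$ by the vorticity transport equation \eqref{2dvorticity-eqn} (a linear homogeneous first-order ODE along particle paths). Thus, when we apply Lemma~\ref{lem-divcurl} to $U=Z^\al u$, the curl term $\|\opcurl Z^\al u\|$ contributes only commutator terms $\|[\,\opcurl,Z^\al]u\|$, which are controlled by lower-order quantities $\|\p Z^{<\al}u\|$ (up to harmless $(1+t)$-weights handled below), since each $Z$ is a first-order operator with polynomial coefficients and the commutators $[\opcurl,Z]$ are again first-order operators with such coefficients. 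Hence the whole burden is shifted onto $\|\opdiv Z^\al u\|$ and $\|\p_t Z^\al u\|$, for which we use the reformulated system \eqref{euler-reform} exactly as in \eqref{3.3}--\eqref{3.4}.

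The key steps, in order: (1) Record $\opcurl u\equiv0$ and its consequence $\|\opcurl Z^{\le4}u\|\ls\|\p Z^{\le3}u\|$ via commutators. (2) From the first equation of \eqref{euler-reform} write $\opdiv u=-(1+(\g-1)\theta)^{-1}(\p_t\theta+u\cdot\nabla\theta)$, apply $Z^\al$ with $|\al|\le4$, and estimate in $L^2$ using the Leibniz rule together with the pointwise bounds \eqref{4.3} and the growth bound \eqref{4.2}; the nonlinear terms carry a factor $K_3\ve$ and can be absorbed, while the linear term gives $\|\p_t Z^{\le4}\theta\|$. (3) From the second equation write $\p_t u=-(u\cdot\nabla u+\frac{\mu}{1+t}u+\nabla\theta)$, apply $Z^\al$, and take $L^2$ norms; here the crucial point is that $\frac{\mu}{1+t}$ commutes cleanly and that $Z(\nabla\theta)$ is again of the form $\nabla Z'\theta$ plus $\p Z'\theta$ up to constant-coefficient errors, so this term is absorbed into $\|\p Z^{\le4}\theta\|$; the term $\frac{\mu}{1+t}\|Z^{\le4}u\|$ is, by \eqref{4.2}, bounded by $\|\p Z^{\le3}u\|$, i.e. by lower-order velocity quantities. (4) Combine: Lemma~\ref{lem-divcurl} gives $\|\nabla Z^{\le4}u\|\ls\|\opcurl Z^{\le4}u\|+\|\opdiv Z^{\le4}u\|$, and chaining with steps (2)--(3) closes on $\|\p Z^{\le4}u\|$ by the smallness of $\ve$, leaving only $\|u\|$ itself and $\|\p Z^{\le4}\theta\|$ on the right. (5) Reassemble the time weights in the definition \eqref{energy2} of $E_5[u]$: multiply the top-order estimate by $(1+t)^{1/2}$ and the zeroth-order term by $(1+t)^{-1/2}$, noting that the extra $(1+t)$ factors produced in \eqref{4.2} are exactly compensated because we reduce order by one each time (so $(1+t)^{1/2}\|\p Z^{\le4}u\|\gtrsim(1+t)^{-1/2}\|Z^{\le4}u\|$ after using \eqref{4.2}).

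The main obstacle is bookkeeping the commutators $[Z^\al,\opdiv]$, $[Z^\al,\opcurl]$, and $[Z^\al,\nabla]$ together with the time weights, so that every lower-order error term genuinely sits at order $\le|\al|-1$ in $\p Z$ and is dominated by the left-hand side after summing over $|\al|\le4$ and using $\ve$ small; in particular one must check that the weight $(1+t)^{-1/2}\|u\|$ is never multiplied by a positive power of $(1+t)$ that would break the energy $E_5[u]$. This is the same mechanism as in Lemma~\ref{lem-velocity1} and is routine once the commutator structure of the Klainerman fields with $\opdiv$, $\opcurl$, $\nabla$ is invoked; the irrotationality $\opcurl u\equiv0$ is what makes the curl contribution lower order rather than a genuine unknown, which is precisely why the hypothesis $\opcurl u_0\equiv0$ is imposed in Theorem~\ref{thm2}.
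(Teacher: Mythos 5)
Your proposal is correct and follows essentially the same route as the paper: propagate $\opcurl u\equiv0$ via \eqref{2dvorticity-eqn}, apply Lemma~\ref{lem-divcurl} to $Z^\al u$ with the curl reduced to commutator terms $\p Z^{<\al}u$, express $\opdiv u$ and $\p_t u$ from \eqref{euler-reform}, and close via \eqref{4.2}--\eqref{4.3} and smallness of $\ve$, which is exactly \eqref{4.5}--\eqref{4.8}. The only slight imprecision is your claim that $\frac{\mu}{1+t}\|Z^{\le4}u\|$ is entirely lower order: for $\al=0$ the bound \eqref{4.2} does not apply and the term $(1+t)^{-1}\|u\|$ must be retained, but you do keep $\|u\|$ on the right in the end, so this matches the paper's conclusion.
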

\begin{proof}
In view of $\opcurl u_0\equiv0$ and \eqref{2dvorticity-eqn}, it is easy to know that $\opcurl u(t,x)\equiv0$
always holds for any $t\ge0$ as long as the smooth solution $(\theta, u)$ of \eqref{euler-reform} exists.
Then, it follows from $\opcurl u\equiv0$ that
\begin{align*}
    \opcurl Z^\al u= Z^\al\opcurl u+\sum_{\beta<\al}C_{\al,\beta}\p Z^\beta u= \sum_{\beta<\al}C_{\al,\beta}\p Z^\beta u,
\end{align*}
which can be abbreviated as
\begin{equation}\label{4.5}
    \opcurl Z^\al u=\p Z^{<\al}u.
\end{equation}
Analogously, we have
\begin{equation}\label{4.6}
    \opdiv Z^\al u=Z^\al\opdiv u+\p Z^{<\al}u.
\end{equation}
Taking $U=Z^\al u$ with $|\al|\le4$ in \eqref{divcurl-estimate} and applying \eqref{4.5}-\eqref{4.6} yield
\begin{align}
    \|\nabla Z^\al u(t,\cdot)\|
    &\ls \|Z^\al\opdiv u(t,\cdot)\|+\|\p Z^{<\al}u(t,\cdot)\| \no\\
    &\ls \|\p Z^{\le\al}\theta(t,\cdot)\|+K_3\ve(1+t)^{-1}\sum_{0<\beta\le\al}
        \|Z^\beta(\theta,u)(t,\cdot)\|+\|\p Z^{<\al}u(t,\cdot)\| \no\\
    &\ls \|\p Z^{\le\al}\theta(t,\cdot)\|+\|\p Z^{<\al}u(t,\cdot)\|, \label{4.7}
\end{align}
where we have used the first equation in \eqref{euler-reform} and \eqref{4.2}-\eqref{4.3}.
On the other hand, one easily gets
\begin{align}
    \|\p_t Z^\al u(t,\cdot)\|
    &\ls \|Z^\al\p_t u(t,\cdot)\|+\|\p Z^{<\al}u(t,\cdot)\| \no\\
    &\ls \|\p Z^{\le\al}\theta(t,\cdot)\|+K_3\ve\|\p Z^{\le\al}u(t,\cdot)\|
        +(1+t)^{-1}\|u(t,\cdot)\|+\|\p Z^{<\al}u(t,\cdot)\|. \label{4.8}
\end{align}
Summing up \eqref{4.7}-\eqref{4.8} from $|\al|=0$ to $|\al|=4$, then \eqref{4.4} is
obtained by the smallness of $\ve$.
This completes the proof of Lemma~\ref{lem-Zvelocity}.
\end{proof}

\begin{lemma}\label{lem-velocity3}
Let $\mu>0$. Under assumption \eqref{4.3}, for all $t\ge0$, it holds that
\begin{equation}\label{4.9}
    \frac{d}{dt}\left[(1+t)^{-1}\|(\theta,u)(t,\cdot)\|^2\right]
        +\frac1{2(1+t)^2}\|(\theta,u)(t,\cdot)\|^2 \le 0.
\end{equation}
\end{lemma}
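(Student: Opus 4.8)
The plan is to run the same $L^2$ energy estimate as in the proof of Lemma~\ref{lem-velocity2}, but now cashing in the extra $(1+t)^{-1}$ decay that is available when $\la=1$ through the sharpened pointwise bound \eqref{4.3}, and --- crucially --- simply discarding the damping term rather than exploiting it, so that the argument is insensitive to the precise size of $\mu>0$ (this is why the hypothesis $\mu>0$, not $\mu>1$, suffices here).

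First I would take the $L^2$ energy identity: multiply the second equation of \eqref{euler-reform} by $u$ and integrate over $\R^2$. Writing the convection term as $u\cdot(u\cdot\nabla)u=\tfrac12\opdiv(|u|^2u)-\tfrac12|u|^2\opdiv u$ and using identity \eqref{3.10} (which holds for every $\la$) to replace $u\cdot\nabla\theta$ by $\opdiv\big((\theta+\tfrac12\theta^2)u\big)+\tfrac12\p_t\theta^2+\tfrac{3-2\g}{2}\theta^2\opdiv u$, all divergence terms integrate to zero thanks to the support property $\supp(\theta,u)\subseteq\{|x|\le t+M\}$, and one is left with
\begin{equation*}
\frac{d}{dt}\|(\theta,u)(t,\cdot)\|^2+\frac{2\mu}{1+t}\|u(t,\cdot)\|^2=\int_{\R^2}\big(|u|^2+(3-2\g)\theta^2\big)\opdiv u\,dx .
\end{equation*}

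Next I would absorb the nonlinear right-hand side. Since $|\opdiv u|\le 2|\p u|$ and \eqref{4.3} gives $|\p u(t,\cdot)|_\infty\ls K_3\ve/(1+t)$ (using $\sigma_-\ge1$), the right-hand side above is bounded by $\tfrac{CK_3\ve}{1+t}\|(\theta,u)(t,\cdot)\|^2$ with $C=C(\g)$. Dropping the nonnegative damping term and choosing $\ve$ small enough that $CK_3\ve\le\tfrac12$ yields
\begin{equation*}
\frac{d}{dt}\|(\theta,u)(t,\cdot)\|^2\le\frac1{2(1+t)}\|(\theta,u)(t,\cdot)\|^2 .
\end{equation*}
Finally, expanding $\frac{d}{dt}\big[(1+t)^{-1}\|(\theta,u)\|^2\big]=-(1+t)^{-2}\|(\theta,u)\|^2+(1+t)^{-1}\frac{d}{dt}\|(\theta,u)\|^2$ and substituting the previous inequality produces exactly \eqref{4.9}. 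I do not anticipate a genuine obstacle: the only points needing care are the integration-by-parts bookkeeping in the energy identity (verifying that every boundary/divergence term truly vanishes) and checking that the sharp constant $\tfrac12$ in \eqref{4.9} is actually attainable --- which is precisely the reason the damping term is thrown away rather than used.
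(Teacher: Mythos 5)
Your argument is correct and is essentially the paper's own proof: the same $L^2$ energy estimate built on the cancellation identity \eqref{3.10} and the pointwise bound \eqref{4.3}, the only cosmetic difference being that the paper puts the weight $(1+t)^{-1}$ into the multiplier before integrating (so the good term $\tfrac1{2(1+t)^2}\|(\theta,u)\|^2$ appears directly from differentiating the weight), whereas you derive the unweighted inequality first, drop the damping, and then apply the product rule. Only note a sign slip in your restatement of \eqref{3.10} (the last term should be $-\tfrac{3-2\g}{2}\,\theta^2\opdiv u$), which is harmless since the final energy identity you display and the subsequent absolute-value absorption are correct.
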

\begin{proof}
Multiplying the second equation in \eqref{euler-reform} by $(1+t)^{-1}u$ derives
\begin{equation}\label{4.10}
  \frac12\,\p_t\left[(1+t)^{-1}|u|^2\right]+\frac{\mu+\frac12}{(1+t)^2}|u|^2
    +(1+t)^{-1}u\cdot\nabla\theta=-\frac12\,(1+t)^{-1}u\cdot\nabla|u|^2.
\end{equation}
From the first equation in \eqref{euler-reform}, we see that
\begin{align}
    (1+t)^{-1}u\cdot\nabla\theta
    &= \opdiv\left[(1+t)^{-1}(\theta u+(\g-1)\theta^2 u)\right]+
        \frac12\,\p_t\left[(1+t)^{-1}|\theta|^2\right] \no\\
    &\quad +\frac1{2(1+t)^2}|\theta|^2+(3-2\g)(1+t)^{-1}\theta\,
        u\cdot\nabla\theta, \label{4.11}
\end{align}
which is similar to the expression in \eqref{3.10}.
Substituting \eqref{4.11} into \eqref{4.10} and integrating it over $\R^2$ yield
\begin{align}
    &\quad \frac{d}{dt}\left[(1+t)^{-1}\|(\theta,u)(t,\cdot)\|^2\right]+
        \frac{1}{2(1+t)^2}\|(\theta,u)(t,\cdot)\|^2 \no\\
    &\ls (1+t)^{-1}|\nabla\theta(t,\cdot)|_\infty\|u(t,\cdot)\|\,\|\theta(t,\cdot)\|
        +(1+t)^{-1}|\nabla u(t,\cdot)|_\infty\|u(t,\cdot)\|^2. \label{4.12}
\end{align}
Substituting \eqref{4.3} into \eqref{4.12}, then \eqref{4.9} can be obtained from the smallness of $\ve$.
This completes the proof of Lemma~\ref{lem-velocity3}.
\end{proof}

\subsection{Estimates of $\theta$ and its derivatives.}

The following lemma shows the estimates of $\theta$.
\begin{lemma}\label{lem-Ztheta}
Let $\mu>1$. Under assumption \eqref{4.3}, for all $t\ge0$, it holds that
\begin{align}
    &\quad E_5^2[\theta](t)+\int_0^t \Big(\|\p Z^{\le4}\theta(s,\cdot)\|^2
        +(1+s)^{-2}\|Z^{\le4}\theta(s,\cdot)\|\Big)\,ds \no\\
    & \ls E_5^2[\theta](0)+K_3\ve\int_0^t (1+s)^{-2}\|u(s,\cdot)\|^2 \,ds. \label{4.13}
\end{align}
\end{lemma}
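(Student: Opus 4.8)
The plan is to prove \eqref{4.13} by a time-weighted energy estimate for the damped wave equation \eqref{damped-wave}, commuted with the Klainerman fields $Z$, fed by the pointwise bounds \eqref{4.3}, the velocity estimate \eqref{4.4} of Lemma~\ref{lem-Zvelocity}, and the weighted inequalities of Lemma~\ref{lem-weight}.

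\emph{Commutation.} Writing $L\defeq\p_t^2+\frac\mu{1+t}\p_t-\Delta$, a direct computation gives $[\p_t,L]=-\frac\mu{(1+t)^2}\p_t$, $[\p_{x_i},L]=[R,L]=0$, $[S,L]=-2L+\frac\mu{(1+t)^2}\p_t$, and $[H_i,L]=-\frac{\mu x_i}{(1+t)^2}\p_t-\frac\mu{1+t}\p_i$. Since $\theta$ and its $Z$-derivatives are supported in $\{|x|\le t+M\}$, every commutator remainder is, up to a multiple of $L$, a term of the form $\frac1{1+t}\p Z^{<\al}\theta$. Hence, for $|\al|\le4$, $L Z^\al\theta=\sum_{\beta\le\al}c_{\al\beta}\,Z^\beta Q(\theta,u)+\frac1{1+t}\p Z^{<\al}\theta$.

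\emph{Energy identity and nonlinear estimates.} Multiplying this by $2(1+t)\p_t Z^\al\theta+c_0 Z^\al\theta$, integrating over $\R^2$, and discarding the spatial divergences (which vanish by compact support), the left side produces $\frac{d}{dt}\mathcal F^\al(t)$ together with the coercive terms $(2\mu-1-c_0)\|\p_t Z^\al\theta\|^2+(c_0-1)\|\nabla Z^\al\theta\|^2+\tfrac{c_0\mu}2\|(1+t)^{-1}Z^\al\theta\|^2$, where $\mathcal F^\al(t)\simeq(1+t)\|\p Z^\al\theta\|^2+(1+t)^{-1}\|Z^\al\theta\|^2$. The constant $c_0$ can be chosen in $(1,2\mu-1)$ exactly because $\mu>1$; this is the single place the hypothesis is used. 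On the right side, the commutator contribution $\frac1{1+t}\p Z^{<\al}\theta$ paired with the multiplier yields terms of type $\|\p Z^\al\theta\|\,\|\p Z^{<\al}\theta\|$ and $(1+t)^{-2}\|Z^\al\theta\|\,\|Z^{<\al}\theta\|$, which I would dispose of by an induction on $|\al|$ (equivalently, a $\dl^{|\al|}$-weighted sum): the level-$|\al|$ part is absorbed by the coercive terms and the level-$<|\al|$ part is controlled by the inductive hypothesis. For the genuine nonlinearity $Z^\beta Q(\theta,u)$ (see \eqref{Q1-def}-\eqref{Q2-def}) I would split each quadratic term into an $L^\infty\times L^2$ product, putting the factor with at most two $Z$'s in $L^\infty$ via \eqref{4.3} to gain the decay $\tfrac{K_3\ve}{1+t}$ (with $\sigma_-$ weights where available) and the remaining factor in $L^2$, using Lemma~\ref{lem-Zvelocity} to trade $L^2$ norms of $u$-derivatives for $\|\p Z^{\le4}\theta\|+(1+t)^{-1}\|u\|$; the $u\cdot\nabla\theta$ term in $Q_1$ is what generates the $(1+s)^{-2}\|u\|^2$ on the right of \eqref{4.13}. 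The quasilinear top-order terms $\theta\Delta\theta$, $u\cdot\nabla\p_t\theta$ and $u_iu_j\p_{ij}^2\theta$ are not amenable to this split and must instead be integrated by parts, exactly as in \eqref{3.23}-\eqref{3.24}, yielding $K_3\ve\,\mathcal F^\al$-terms with small coefficient plus admissible remainders. Summing over $|\al|\le4$, absorbing the $K_3\ve$-terms, integrating in time, and invoking Lemma~\ref{lem-velocity3} for the purely zeroth-order $\|\theta\|^2$ contribution then gives \eqref{4.13}.

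\emph{Main obstacle.} The difficulty is that $\mu$ is only slightly above $1$: the coercivity window $(1,2\mu-1)$ for $c_0$ is narrow, and in two space dimensions the Klainerman-Sobolev decay is merely $(1+t)^{-1}$, which is exactly borderline for quadratic nonlinearities. Closing the estimate therefore hinges on extracting the extra $\sigma_-$-decay near the light cone and on exploiting the precise structure of $Q$ (the integration-by-parts identities for the $\Delta\theta$ and $\p_{ij}^2\theta$ terms), so that every error term carries genuinely integrable time weight --- in particular $(1+s)^{-2}\|u\|^2$ rather than a slower rate --- thereby avoiding a logarithmic loss that Gronwall's inequality would amplify into unacceptable growth of $E_5[\theta]$.
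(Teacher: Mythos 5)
Your proposal is correct and follows essentially the same route as the paper: commute the damped wave equation \eqref{damped-wave} with $Z^\al$, multiply by a combination $a(1+t)\p_tZ^\al\theta+bZ^\al\theta$ whose coercivity window requires exactly $\mu>1$ (the paper's choice $2\mu(1+t)\p_tZ^\al\theta+(2\mu-1)Z^\al\theta$ corresponds to $c_0=2-\tfrac1\mu$ in your admissible range), absorb commutators by summing/inducting over $|\al|$, treat the semilinear terms by the $L^\infty\times L^2$ split with \eqref{4.3} and Lemma~\ref{lem-Zvelocity}, the quasilinear terms $\theta\Delta Z^\al\theta$, $u\cdot\nabla\p_tZ^\al\theta$, $u_iu_j\p^2_{ij}Z^\al\theta$ by the same integration-by-parts identities, and the lower-order products with the $\sigma_-$-weighted inequalities of Lemma~\ref{lem-weight}, which is precisely the paper's \eqref{4.14}--\eqref{4.27}. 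Your appeal to Lemma~\ref{lem-velocity3} for the zeroth-order $\|\theta\|^2$ is harmless but unnecessary, since the multiplier already puts $\frac{c_0\mu}{2(1+t)}|Z^\al\theta|^2$ into the energy density.
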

\begin{proof}
Acting $Z^\al$ with $|\al|\le4$ on both sides of equation \eqref{theta-eqn} implies
\begin{equation}\label{4.14}
    \p_t^2Z^\al\theta+\frac\mu{1+t}\p_tZ^\al\theta-\Delta Z^\al\theta=Q^\al_2(\theta,u),
\end{equation}
where
\begin{align}
    Q^\al_2(\theta,u) &\defeq Z^\al Q(\theta,u)+Q^\al_{21}+Q^\al_{22}+
        Q^\al_{23} \no\\
    &\defeq Z^\al Q(\theta,u)+[\p_t^2-\Delta,Z^\al]\,\theta+
        \frac\mu{1+t}[\p_t,Z^\al]\,\theta
        +\sum_{0<\beta\le\al}Z^\beta\left(\frac\mu{1+t}\right)
        Z^{\al-\beta}\p_t\theta. \label{4.15}
\end{align}
Multiplying \eqref{4.14} by $2\mu(1+t)\p_t Z^\al\theta+(2\mu-1)Z^\al\theta$ yields
\begin{align}
    &\p_t\left[\mu(1+t)|\p Z^\al\theta|^2+(2\mu-1)Z^\al\theta\p_tZ^\al\theta+
        \frac{\mu(2\mu-1)}{2(1+t)}|Z^\al\theta|^2\right] \no\\
    &\quad +(\mu-1)(2\mu-1)|\p_tZ^\al\theta|^2+(\mu-1)|\nabla Z^\al\theta|^2+
        \frac{\mu(2\mu-1)}{2(1+t)^2}|Z^\al\theta|^2 \no\\
    &=\opdiv\left[\nabla Z^\al\theta \left(2\mu(1+t)\p_t Z^\al\theta+(2\mu-1)
        Z^\al\theta\right) \right]+Q^\al_2(\theta,u)\left(2\mu(1+t)\p_t Z^\al\theta+(2\mu-1)
        Z^\al\theta\right). \label{4.16}
\end{align}
Thanks to $\mu>1$, we see that for the term in the square bracket of the first line in \eqref{4.16}
\begin{align*}
    &\quad \mu(1+t)|\p Z^\al\theta|^2+(2\mu-1)Z^\al\theta\p_tZ^\al\theta+
        \frac{\mu(2\mu-1)}{2(1+t)}|Z^\al\theta|^2 \\
    &=(1+t)\Big(\frac12|\p_tZ^\al\theta|^2+\mu|\nabla Z^\al\theta|^2\Big)+
        \frac{(\mu-1)(2\mu-1)}{2(1+t)}|Z^\al\theta|^2+\frac{2\mu-1}{2(1+t)}
        \Big( (1+t)\p_tZ^\al\theta+Z^\al\theta\Big)^2,
\end{align*}
which is equivalent to $(1+t)|\p Z^\al\theta|^2+\frac1{1+t}|Z^\al\theta|^2$.
Integrating \eqref{4.16} over $[0,t]\times\R^2$ yields
\begin{align}
    &\quad (1+t)\|\p Z^\al\theta(t,\cdot)\|^2+(1+t)^{-1}\|Z^\al\theta(t,\cdot)\|^2
        +\int_0^t \Big(\|\p Z^\al\theta(s,\cdot)\|^2+(1+s)^{-2}
        \|Z^\al\theta(s,\cdot)\|\Big)\,ds \no\\
    &\ls E_5^2[\theta](0)+\left|\int_0^t\int_{\R^2} Q^\al_2(\theta,u)\Big(2\mu
        (1+s)\p_t Z^\al\theta+(2\mu-1)Z^\al\theta\right)\,dxds\Big|. \label{4.17}
\end{align}
It follows from a direct computation that
\begin{equation}\label{4.18}
    Q^\al_{21}=Z^{<\al}(\p_t^2-\Delta)\theta=Z^{<\al}Q(\theta,u)-Z^{<\al}\left(
        \frac{\mu}{1+t}\p_t\theta\right) \defeq Z^{<\al}Q(\theta,u)+Q^\al_{24}.
\end{equation}
For $\al=0$, we find that $Q^\al_{22}=Q^\al_{23}=Q^\al_{24}=0$.
For $\al>0$ and by \eqref{4.2}, we see that
\begin{align}
    &\quad \left|\int_0^t\int_{\R^2}\left(Q^\al_{22}+Q^\al_{23}+Q^\al_{24}\right)
        \Big(2\mu(1+s)\p_t Z^\al\theta+(2\mu-1)Z^\al\theta\Big)\,dxds\right| \no\\
    &\ls \int_0^t\|\p Z^{\le\al}\theta(s,\cdot)\|\,\|\p Z^{<\al}\theta(s,\cdot)\|\,ds. \label{4.19}
\end{align}
Recall the definition of $Q(\theta,u)$ in \eqref{Q-def}-\eqref{Q2-def} as follows
\begin{align*}
    Q(\theta,u) &=Q_1(\theta,u)+Q_2(\theta,u), \\
    Q_1(\theta,u) &=-\frac\mu{1+t}u\cdot\nabla\theta+(\g-1)\theta\Delta\theta
        -2u\cdot\nabla\p_t\theta-\sum_{i,j=1,2} u_iu_j\p_{ij}^2\theta, \\
    Q_2(\theta,u) &= -\sum_{i,j=1,2}u_i\p_iu_j\p_j\theta-\p_tu\cdot\nabla\theta+(1+(\g-1)\theta)
        (\sum_{i,j=1,2}\p_iu_j\p_ju_i+(\g-1)|\opdiv u|^2).
\end{align*}
Then it follows from \eqref{4.2}-\eqref{4.4} and $|\al|\le4$ that
\begin{align}
    &\quad \left|\int_0^t\int_{\R^2} Z^{\le\al}Q_2(\theta,u)\Big(2\mu(1+s)
        \p_t Z^\al\theta+(2\mu-1)Z^\al\theta\Big) \,dxds\right| \no\\
    &\ls \int_0^t \Big(|\theta(s,\cdot)|_\infty+(1+s)|\p Z^{\le2}(\theta,u)
        (s,\cdot)|_\infty\Big)\,\|\p Z^{\le4}(\theta,u)(s,\cdot)\|^2 \,ds \no\\
    &\ls K_3\ve\int_0^t \Big(\|\p Z^{\le4}\theta(s,\cdot)\|^2+(1+s)^{-2}
        \|u(s,\cdot)\|^2\Big)\,ds. \label{4.20}
\end{align}
Substituting \eqref{4.19}-\eqref{4.20} into \eqref{4.17} derives
\begin{align}
    &\quad (1+t)\|\p Z^\al\theta(t,\cdot)\|^2+(1+t)^{-1}\|Z^\al\theta(t,\cdot)\|^2
        +\int_0^t \Big(\|\p Z^\al\theta(s,\cdot)\|^2+(1+s)^{-2}
        \|Z^\al\theta(s,\cdot)\|\Big)\,ds \no\\
    &\ls E_5^2[\theta](0)+\int_0^t\|\p Z^{<\al}\theta(s,\cdot)\|^2\,ds
        +K_3\ve\int_0^t \Big(\|\p Z^{\le4}\theta(s,\cdot)\|^2+(1+s)^{-2}
        \|u(s,\cdot)\|^2\Big)\,ds \no\\
    &\quad +\left|\int_0^t \int_{\R^2} Z^{\le\al}Q_1(\theta,u)\Big(2\mu(1+s)
        \p_t Z^\al\theta+(2\mu-1)Z^\al\theta\Big)\,dxds\right|. \label{4.21}
\end{align}
Next we focus on the treatment of $Z^{\le\al}Q_1(\theta,u)$.
In view of $|\al|\le4$, we find that
\begin{equation*}
    Z^{\le\al}(\theta\Delta\theta)=\theta\Delta Z^\al\theta+\sum_{1\le|\beta|\le2}
        Z^\beta\theta\,\p^2Z^{\al-\beta}\theta+\sum_{|\beta|\ge3}
        Z^\beta\theta\,\p^2Z^{\al-\beta}\theta,
\end{equation*}
which can be abbreviated as
\begin{equation*}
    Z^{\le\al}(\theta\Delta\theta)=\theta\Delta Z^\al\theta+Z^{\le2}\theta
        \,\p^2Z^{\le3}\theta+Z^{\le4}\theta\,\p^2Z^{\le1}\theta.
\end{equation*}
From this and the definition of $Q_1(\theta,u)$, we see that
\begin{align*}
    &\quad Z^{\le\al}Q_1(\theta,u) \no\\
    &=-Z^{\le\al}\left(\frac\mu{1+t}u\cdot\nabla\theta\right)+(\g-1)
        \theta\Delta Z^\al\theta-2u\cdot\nabla\p_tZ^\al\theta
        -\sum_{i,j=1,2}u_iu_j\p_{ij}^2Z^\al\theta+Q_3(\theta,u),
\end{align*}
where
\begin{align*}
    Q_3(\theta,u) \defeq (Z^{\le2}\theta+Z^{\le2}u)\p^2Z^{\le3}\theta+
        (Z^{\le4}\theta+Z^{\le4}u)\p^2Z^{\le1}\theta \no\\
    +Z^{\le2}u\,Z^{\le2}u\,\p^2Z^{\le3}\theta+Z^{\le2}u\,Z^{\le4}u\,
        \p^2Z^{\le1}\theta.
\end{align*}
If $\al=0$, applying \eqref{4.3} yields
\begin{align}
    &\quad \left|\int_0^t\int_{\R^2} \frac\mu{1+s}u\cdot\nabla\theta
        \Big(2\mu(1+s)\p_t\theta+(2\mu-1)\theta\Big) \,dxds\right| \no\\
    &\ls \int_0^t \Big(|u(s,\cdot)_\infty\|\p\theta(s,\cdot)\|^2+(1+s)^{-1}
        |\theta(s,\cdot)|_\infty\|u(s,\cdot)\|\,\|\nabla\theta(s,\cdot)\|\Big) \,ds \no\\
    &\ls K_3\ve\int_0^t \Big(\|\p Z^{\le4}\theta(s,\cdot)\|^2+(1+s)^{-2}
        \|u(s,\cdot)\|^2\Big) \,ds. \label{4.22}
\end{align}
If $\al>0$, from \eqref{4.2}-\eqref{4.4} we see that
\begin{align*}
    &\quad \left\|Z^{\le\al}\left(\frac\mu{1+t}u\cdot\nabla\theta\right) \right\| \\
    &\ls (1+t)^{-1} |Z^{\le2}u(t,\cdot)|_\infty\|\p Z^{\le4}\theta(t,\cdot)
        \|+(1+t)^{-1} |\p Z^{\le2}\theta(t,\cdot)|_\infty\|Z^{\le4}u(t,\cdot)\| \\
    &\ls K_3\ve\,(1+t)^{-1}\|\p Z^{\le4}\theta(t,\cdot)\|+K_3\ve(1+t)^{-2}\|u(t,\cdot)\|,
\end{align*}
which derives
\begin{align}
    &\quad \left|\int_0^t\int_{\R^2} Z^{\le\al}\left(\frac\mu{1+s}u\cdot\nabla\theta\right)
        \Big(2\mu(1+s)\p_t Z^\al\theta+(2\mu-1)Z^\al\theta\Big)\,dxds\right| \no\\
    &\ls K_3\ve\int_0^t \Big(\|\p Z^{\le4}\theta(s,\cdot)\|^2+(1+s)^{-2}\|u(s,\cdot)\|^2\Big) \,ds. \label{4.23}
\end{align}
As in Lemma~\ref{lem-thetaglobal}, direct computation derives the following identities
\begin{align*}
    (1+t)\theta\Delta Z^\al\theta\p_tZ^\al\theta=
    & \opdiv\left[(1+t)\theta\nabla Z^\al\theta\p_tZ^\al\theta\right]-(1+t)
        \nabla\theta\cdot\nabla Z^\al\theta\p_tZ^\al\theta \\
    & -\frac12\,\p_t\left[(1+t)\theta\,|\nabla Z^\al\theta|^2\right]+\frac12\,
        \theta\,|\nabla Z^\al\theta|^2+\frac12(1+t)\p_t\theta\,|\nabla Z^\al\theta|^2, \\
    \theta\Delta Z^\al\theta Z^\al\theta=
    &\opdiv\left[\theta\nabla Z^\al\theta Z^\al\theta\right]-\theta\,
        |\nabla Z^\al\theta|^2-\nabla\theta\cdot\nabla Z^\al\theta Z^\al\theta,
\end{align*}
together with \eqref{4.2}-\eqref{4.3}, this yields
\begin{align}
    &\quad \left|\int_0^t\int_{\R^2} \theta\Delta Z^\al\theta \Big(2\mu(1+s)
        \p_t Z^\al\theta+(2\mu-1)Z^\al\theta\Big)\,dxds\right| \no\\
    &\ls K_3\ve \Big(E_5^2[\theta](0)+E_5^2[\theta](t)\Big)+K_3\ve\int_0^t
        \|\p Z^{\le4}\theta(s,\cdot)\|^2 \,ds. \label{4.24}
\end{align}
Analogously, we have
\begin{align*}
    2(1+t)u\cdot\nabla\p_tZ^\al\theta\p_tZ^\al\theta=
    & \opdiv\left[(1+t)u\,|\p_tZ^\al\theta|^2\right]-(1+t)\opdiv u\,|\p_tZ^\al\theta|^2, \\
    u\cdot\nabla\p_tZ^\al\theta Z^\al\theta=
    & \opdiv\left[u\,\p_tZ^\al\theta Z^\al\theta\right]
        -\p_tZ^\al\theta(u\cdot\nabla Z^\al\theta+\opdiv u\,Z^\al\theta)
\end{align*}
and
\begin{align*}
    2(1+t)u_iu_j\p_{ij}^2Z^\al\theta\p_tZ^\al\theta=
    &\p_i\Big[(1+t)u_iu_j\p_jZ^\al\theta\p_tZ^\al\theta\Big]+
        \p_j\Big[(1+t)u_iu_j\p_iZ^\al\theta\p_tZ^\al\theta\Big] \\
    & -(1+t)\p_tZ^\al\theta\Big[\p_i(u_iu_j)\p_jZ^\al\theta+
        \p_j(u_iu_j)\p_iZ^\al\theta\Big] \\
    & -\p_t\Big[(1+t)u_iu_j\p_iZ^\al\theta\p_jZ^\al\theta\Big]+
        u_iu_j\p_iZ^\al\theta\p_jZ^\al\theta \\
    & +(1+t)\p_t(u_iu_j)\p_iZ^\al\theta\p_jZ^\al\theta, \\
    u_iu_j\p_{ij}^2Z^\al\theta Z^\al\theta=
    & \p_i\left[u_iu_j\p_jZ^\al\theta Z^\al\theta\right]-
        \p_jZ^\al\theta\p_i(u_iu_jZ^\al\theta),
\end{align*}
which derives
\begin{align}
    &\quad \left|\int_0^t\int_{\R^2} \left(2u\cdot\nabla\p_tZ^\al\theta+
        \sum_{i,j=1,2} u_iu_j\p_{ij}^2Z^\al\theta\right)
        \Big(2\mu(1+s)\p_t Z^\al\theta+(2\mu-1)Z^\al\theta\Big) \,dxds\right| \no\\
    &\ls K_3\ve \Big(E_5^2[\theta](0)+E_5^2[\theta](t)\Big)+K_3\ve
        \int_0^t \Big(\|\p Z^{\le4}\theta(s,\cdot)\|^2+(1+s)^{-2}
        \|u(s,\cdot)\|^2\Big) \,ds. \label{4.25}
\end{align}
Next we turn our attention to $Q_3(\theta,u)$.
Note that $Q_3(\theta,u)=0$ when $\al=0$.
It follows from direct calculation that for any function $\Phi(t,x)$
\begin{equation*}
    |\sigma_-(t,x)\p\Phi(t,x)| \ls |Z\Phi(t,x)|.
\end{equation*}
From this, \eqref{weight-L2} and \eqref{4.2}-\eqref{4.4}, we see that
\begin{align*}
    &\quad \|Q_3(\theta,u)\| \no\\
    &\ls \|Z^{\le2}(\theta,u)\p^2Z^{\le3}\theta\|+
        \|Z^{\le4}(\theta,u)\p^2Z^{\le1}\theta\| \\
    &\ls |\sigma_-^{-1}(t,\cdot)Z^{\le2}(\theta,u)(t,\cdot)|_\infty
        \|\sigma_-(t,\cdot)\p^2Z^{\le3}\theta(t,\cdot)\| \\
    &\quad +|\sigma_-^{\frac32}(t,\cdot)\p^2Z^{\le1}\theta(t,\cdot)|_\infty
        \|\sigma_-^{-\frac32}(t,\cdot)Z^{\le4}(\theta,u)(t,\cdot)\| \\
    &\ls K_3\ve\,(1+t)^{-1}\|\p Z^{\le4}\theta(t,\cdot)\|+|\sigma_-^{\frac12}(t,\cdot)
        \p Z^{\le2}\theta(t,\cdot)|_\infty \|\nabla Z^{\le4}(\theta,u)(t,\cdot)\| \\
    &\ls K_3\ve\,(1+t)^{-1}\|\p Z^{\le4}\theta(t,\cdot)\|+K_3\ve\,
        (1+t)^{-2}\|u(t,\cdot)\|,
\end{align*}
which implies
\begin{align}\label{}
    &\quad \left|\int_0^t\int_{\R^2} Q_3(\theta,u) \Big(2\mu(1+s)\p_t Z^\al\theta
        +(2\mu-1)Z^\al\theta\Big) \,dxds\right| \no\\
    &\ls K_3\ve\int_0^t \Big(\|\p Z^{\le4}\theta(s,\cdot)\|^2+(1+s)^{-2}\|u(s,\cdot)\|^2\Big) \,ds. \label{4.26}
\end{align}
Substituting \eqref{4.22}-\eqref{4.26} into \eqref{4.21} derives
\begin{align}
    &\quad (1+t)\|\p Z^\al\theta(t,\cdot)\|^2+(1+t)^{-1}\|Z^\al\theta(t,\cdot)\|^2
        +\int_0^t \Big(\|\p Z^\al\theta(s,\cdot)\|^2+(1+s)^{-2}
        \|Z^\al\theta(s,\cdot)\|\Big)\,ds \no\\
    &\ls E_5^2[\theta](0)+K_3\ve\,E_5^2[\theta](t)+\int_0^t\|\p Z^{<\al}\theta(s,\cdot)\|^2\,ds \no\\
    &\quad +K_3\ve\int_0^t \Big(\|\p Z^{\le4}\theta(s,\cdot)\|^2+(1+s)^{-2}\|u(s,\cdot)\|^2\Big) \,ds. \label{4.27}
\end{align}
Summing up \eqref{4.27} from $|\al|=0$ to $|\al|=4$ yields \eqref{4.13}.
This completes the proof of Lemma~\ref{lem-Ztheta}.
\end{proof}

\subsection{Proof of Theorem~\ref{thm2}.}
\begin{proof}[Proof of Theorem~\ref{thm2}]
Integrating \eqref{4.9} over $[0,t]$ yields that
\begin{equation*}
    (1+t)^{-1}\|(\theta,u)(t,\cdot)\|^2+\int_0^t (1+s)^{-2}\|(\theta,u)(s,\cdot)\|^2\,ds \ls \|(\theta,u)(0,\cdot)\|^2.
\end{equation*}
Collecting this with \eqref{4.4} and \eqref{4.13}, we conclude that $E_5[\theta,u](t) \le C_3\ve$.
Let $K_3=2C_3$, and choose $\ve>0$ sufficiently small.
Then, we infer $E_5[\theta,u](t) \le \frac12 K_3\ve$, which implies that \eqref{euler-reform}
admits a global solution for Case 2
with $\opcurl u_0(x)\equiv 0$. Thus we complete the proof of Theorem~\ref{thm2}.
\end{proof}


\section{Proof of Theorem~\ref{thm3}.}\label{section5}


In this section, we shall only prove Theorem~\ref{thm3} for $d=2$ since
the corresponding blowup result for Case 4 with $\opcurl u_0(x)\equiv 0$
in three space dimensions has been proved in \cite{HWY15}.
\begin{proof}[Proof of Theorem~\ref{thm3}]
We divide the proof into two parts.

\subsubsection*{Part~\RN{1}: \boldmath $\g=2$.}

Let $(\rho, u)$ be a $C^{\infty}-$smooth solution of \eqref{euler-eqn}.
For $l>0$, we define
\begin{equation}\label{5.1}
P(t,l)=\int_{x_1>l}\eta(x,l)\left(\rho(t,x)-\bar\rho\right)dx,
\end{equation}
where
\begin{equation*}
\eta(x,l)=(x_1-l)^2.
\end{equation*}
Employing the first equation in \eqref{euler-eqn} and an integration by parts, we see that
\begin{align}\label{5.2}
  \p_tP(t,l)&=\int_{x_1>l}\eta(x,l)\p_t\left(\rho(t,x)-\bar\rho\right)dx=
    -\,\int_{x_1>l}\eta(x,l)\opdiv(\rho u)(t,x)\,dx \no\\
  &=\int_{x_1>l}(\p_{x_1}\eta)(x,l)(\rho u_1)(t,x)\,dx,
\end{align}
where we have used the facts of $\eta(x,l)=0$ on $x_1=l$ and $u(t,x)=0$ for $|x|\ge t+M$.
By differentiating $\p_tP(t,l)$ in \eqref{5.2} again and using the equation of $u_1$ in
\eqref{euler-eqn}, we find that
\begin{multline*}
    \p_t^2P(t,l) =\int_{x_1>l}(\p_{x_1}\eta)(x,l)\p_t(\rho u_1)(t,x)\,dx
        =-\sum_{j=1,2}\int_{x_1>l}(\p_{x_1}\eta)\,\p_{x_j}(\rho u_1u_j)(t,x)\,dx \\
    -\int_{x_1>l}(\p_{x_1}\eta)(x,l)\p_{x_1}(p(t,x)-\bar p)\,dx-\frac\mu{(1+t)^\la}
        \int_{x_1>l}(\p_{x_1}\eta)(x,l)(\rho u_1)(t,x)\,dx,
\end{multline*}
where $\bar p=p(\bar\rho)$.
It follows from the integration by parts that
\begin{align}\label{5.3}
  \p_t^2P(t,l)+ \frac\mu{(1+t)^\la}\,\p_tP(t,l)&=\sum_{j=1,2}
  \int_{x_1>l}(\p_{x_1x_j}^2\eta)\rho u_1u_j\,dx+\int_{x_1>l}2(p-\bar p)\,dx \no\\
  &= \int_{x_1>l}2\rho u_1^2\,dx+\int_{x_1>l}2(p-\bar p)\,dx,
\end{align}
here we have used that $\p_{x_1}\eta(x,l)=0$ on $x_1=l$ and $p(t,x)-\bar p$ vanishes for $|x|\ge t+M$.
Note that
\begin{equation*}
\p_l^2\eta(x,l)=\Delta_x\eta(x,l)=2.
\end{equation*}
Then we have
\begin{equation}\label{5.4}
\int_{x_1>l}2(p-\bar p)\,dx=\int_{x_1>l}\p_l^2\eta(x,l)(p(t,x)-\bar p)\,dx=
\p_l^2\int_{x_1>l}\eta(x,l)(p(t,x)-\bar p)\,dx,
\end{equation}
where we have used the fact that $\eta$ and $\p_l\eta$ vanish on $x_1=l$.
Collecting \eqref{5.3}-\eqref{5.4}, we arrive at
\begin{equation}\label{5.5}
\p_t^2P(t,l)-\p_l^2P(t,l)+\frac\mu{(1+t)^\la}\,\p_tP(t,l)\defeq f(t,l)=\int_{x_1>l}2\rho u_1^2\,dx+G(t,l)\ge G(t,l),
\end{equation}
where
\begin{equation}\label{5.6}
    G(t,l)=\int_{x_1>l}2\left(p-\bar p-(\rho-\bar\rho)\right)dx
    =\p_l^2\int_{x_1>l}\eta(x,l)\left(p-\bar p-(\rho-\bar\rho)\right)dx
    \defeq \p_l^2\t G(t,l).
\end{equation}
Due to $\g=2$ and the sound speed $\bar c=\sqrt{2A\bar\rho}=1$, we have
\begin{equation}\label{5.7}
p-\bar p-(\rho-\bar\rho)=A\left(\rho^2-\bar\rho^2
-2\bar\rho\left(\rho-\bar\rho\right)\right)=A(\rho-\bar\rho)^2.
\end{equation}
Substituting \eqref{5.7} into \eqref{5.6} gives
\begin{equation*}
G(t,l),\,\t G(t,l) \ge 0.
\end{equation*}
For $M_0$ satisfying the condition \eqref{+condition}, let $\Sigma\defeq \{(t,l)\colon t\ge0, t+M_0\le l\le t+M\}$ be the strip domain.
By applying Riemann's representation (see \cite[\S5.5 of Chapter~5]{CH}) with the assumptions \eqref{q0-positive}-\eqref{+condition},
we have the following lower bound of the solution $P(t,l)$ to \eqref{5.5} for $(t,l)\in\Sigma$
\begin{equation}\label{5.8}
    P(t,l)\ge \frac14\Xi(t)^{-\frac12}q_0(l-t)+\frac14\int_0^t\int_{l-t+\tau}^{l+t-\tau} \left(\frac{\Xi(\tau)}{\Xi(t)}\right)^\frac12 f(\tau,y)\,dyd\tau.
\end{equation}
We put the proof of \eqref{5.8} in Appendix. Define the function
\begin{equation}\label{5.9}
    F(t)\defeq\int_0^t(t-\tau)\int_{\tau+M_0}^{\tau+M}P(\tau,l)\,\frac{dl}{\sqrt l} d\tau.
\end{equation}
From the definition of $\Xi(t)$, i.e., \eqref{Xi-def} for $\la=1$, $\mu\le1$ or $\la>1$,
we have $\Xi(t)^{-\frac12}\gt(t+M)^{-\frac12}$ and $\frac{\Xi(\tau)}{\Xi(t)}\gt\frac{\tau+M}{t+M}$.
Then, by \eqref{5.8}, we arrive at
\begin{align}
    &F''(t)=\int_{t+M_0}^{t+M}P(t,l)\,\frac{dl}{\sqrt l} \gt
        (t+M)^{-\frac12}\int_{t+M_0}^{t+M} q_0(l-t)\,\frac{dl}{\sqrt l} \no\\
    &\quad +\int_{t+M_0}^{t+M}\int_0^t\int_{l-t+\tau}^{l+t-\tau} \left(\frac{\tau+M}{t+M}\right)^\frac12 G(\tau,y)\,dyd\tau
            \frac{dl}{\sqrt l} \defeq J_1+J_2. \label{5.10}
\end{align}
From assumption \eqref{q0-positive}, we see that
\begin{equation}\label{5.11}
    J_1\gt \frac{1}{t+M}\int_{t+M_0}^{t+M}q_0(l-t)\,dl=\frac{1}{t+M}\int_{M_0}^M q_0(l)\,dl \gt \frac{\ve}{t+M}.
\end{equation}
To bound $J_2$ from below, we write
\begin{align}
  J_2&=\int_0^{t-M_1}\int_{\tau+M_0}^{\tau+M} \left(\frac{\tau+M}{t+M}\right)^\frac12
    G(\tau,y) \int_{t+M_0}^{y+t-\tau} \,\frac{dl}{\sqrt l}dyd\tau \no\\
  &\quad+\int_{t-M_1}^t\int_{\tau+M_0}^{2t-\tau+M_0} \left(\frac{\tau+M}{t+M}\right)^\frac12
    G(\tau,y) \int_{t+M_0}^{y+t-\tau} \,\frac{dl}{\sqrt l}dyd\tau \no\\
  &\quad+\int_{t-M_1}^t\int_{2t-\tau+M_0}^{\tau+M} \left(\frac{\tau+M}{t+M}\right)^\frac12
    G(\tau,y) \int_{y-t+\tau}^{y+t-\tau} \,\frac{dl}{\sqrt l}dyd\tau \no\\
  &\defeq J_{2,1}+J_{2,2}+J_{2,3}, \label{5.12}
\end{align}
where $M_1=\left(M-M_0\right)/2$. For $t<M_1$, $t-M_1$ in the limits of integration will be replaced by $0$.
For the integrand in $J_{2,1}$ we have that
\begin{equation}\label{5.13}
  \int_{t+M_0}^{y+t-\tau} \frac{dl}{\sqrt l}
  \gt \frac{y-\tau-M_0}{(t+M)^\frac12}
  \gt \frac{(t-\tau)(y-\tau-M_0)^2}{(t+M)^\frac32}.
\end{equation}
Analogously, for the integrands in $J_{2,2}$ and $J_{2,3}$ we have that
\begin{equation}\label{5.14}
  \int_{t+M_0}^{y+t-\tau} \frac{dl}{\sqrt l}
  \gt \frac{(t-\tau)(y-\tau-M_0)^2}{(t+M)^\frac32}
\end{equation}
and
\begin{equation}\label{5.15}
  \int_{y-t+\tau}^{y+t-\tau} \frac{dl}{\sqrt l}
  \gt \frac{t-\tau}{(t+M)^\frac12}
  \gt \frac{(t-\tau)(y-\tau-M_0)^2}{(t+M)^\frac32}.
\end{equation}
Substituting \eqref{5.13}-\eqref{5.15} into \eqref{5.12} yields
\begin{align*}
  J_2 \gt \frac{1}{(t+M)^2}\int_0^t (t-\tau)(\tau+M)^\frac12
  \int_{\tau+M_0}^{\tau+M} (y-\tau-M_0)^2\p_y^2\t G(\tau,y)\,dyd\tau,
\end{align*}
where $\t G(\tau,y)=\int_{x_1>y} (x_1-y)^2 \left(p(\tau,x)-\bar p-(\rho(\tau,x)-\bar\rho)\right)dx$.
Note that $\t G(\tau,y)=\p_y\t G(\tau,y)=0$ for $y=\tau+M$.
Then it follows from the integration by parts together with \eqref{5.6}-\eqref{5.7} that
\begin{align}
    J_2&\gt \frac{1}{(t+M)^2}\int_0^t (t-\tau)(\tau+M)^\frac12 \int_{\tau+M_0}^{\tau+M}\t G(\tau,y)\,dyd\tau \no \\
    &\gt \frac{1}{(t+M)^2}\int_0^t (t-\tau)(\tau+M)^\frac12 \int_{\tau+M_0}^{\tau+M} \int_{x_1>y} (x_1-y)^2
        \left(\rho(\tau,x)-\bar\rho\right)^2dxdyd\tau \no\\
    &\defeq \frac{c}{(t+M)^2}\,J_3. \label{5.16}
\end{align}
By applying the Cauchy-Schwartz inequality to $F(t)$ defined by \eqref{5.9}, we arrive at
\begin{equation}\label{5.17}
  F^2(t) \le J_3\int_0^t (t-\tau)(\tau+M)^{-\frac12}\int_{\tau+M_0}^{\tau+M}
  \int_{\t\Omega} (x_1-y)^2 \,dx\frac{dy}{y}d\tau\defeq J_3J_4,
\end{equation}
where $\t\Omega \defeq \{x\colon x_1>y,~|x|<\tau+M\}$.
Note that
\begin{align}
  J_4 &\ls \int_0^t (t-\tau)(\tau+M)^{-\frac12}\int_{\tau+M_0}^{\tau+M}\int_y^{\tau+M}
    (x_1-y)^2 [(\tau+M)^2-x_1^2]^\frac12 \,dx_1\frac{dy}{y}d\tau \no\\
  &\ls \int_0^t(t-\tau)\int_{\tau+M_0}^{\tau+M}(\tau+M-y)^\frac72\frac{dy}{y}d\tau \no \\
  &\ls \int_0^t(t-\tau)\int_{\tau+M_0}^{\tau+M}\frac{dy}{y}d\tau \no \\
  &\ls \int_0^t\frac{t-\tau}{\tau+M}\,d\tau \ls (t+M)\log(t/M+1). \label{5.18}
\end{align}
Combining \eqref{5.10}-\eqref{5.11} and \eqref{5.16}-\eqref{5.18} gives the following ordinary differential inequalities
\begin{align}
    F''(t) &\gt \frac{\ve}{t+M}, && t\ge0,  \label{5.19} \\
    F''(t) &\gt \left[(t+M)^3\log(t/M+1)\right]^{-1} \,F^2(t), && t\ge0.  \label{5.20}
\end{align}
Next, we apply \eqref{5.19}-\eqref{5.20} to prove that the lifespan $T_\ve$ of smooth solution $F(t)$
is finite for all $0<\ve\le\ve_0$.
The fact that $F(0)=F'(0)=0$, together with \eqref{5.19}, yields
\begin{align}
    F'(t) &\gt \ve\log(t/M+1), && t\ge0, \label{5.21} \\
    F(t)  &\gt \ve(t+M)\log(t/M+1), && t\ge t_1\defeq Me^2. \label{5.22}
\end{align}
Substituting \eqref{5.22} into \eqref{5.20} derives
\begin{equation*}
    F''(t) \gt \ve^2(t+M)^{-1}\log(t/M+1), \qquad t\ge t_1,
\end{equation*}
which leads to the improvement
\begin{equation}\label{5.23}
    F(t) \gt \ve^2(t+M)\log^2(t/M+1), \qquad t\ge t_2 \defeq Me^3>t_1.
\end{equation}
Substituting this into \eqref{5.20} yields
\begin{equation}\label{5.24}
    F''(t) \gt \ve^2(t+M)^{-2}\log(t/M+1)\,F(t), \qquad t\ge t_2.
\end{equation}
It follows from \eqref{5.21} that $F'(t)\ge0$ for $t\ge0$.
Then multiplying \eqref{5.24} by $F'(t)$ and integrating from $t_3$ (which will be chosen later) to $t$ derive
\begin{equation*}
    F'(t)^2 \ge F'(t_3)^2+C_4\ve^2\int_{t_3}^t (s+M)^{-2}\log(s/M+1)\,[F(s)^2]'ds.
\end{equation*}
It follows from the integration by parts that
\begin{multline}\label{5.25}
    F'(t)^2 \ge
    F'(t_3)^2+C_4\ve^2 \left((t+M)^{-2}\log(t/M+1)F(t)^2-(t_3+M)^{-2}\log(t_3/M+1)F(t_3)^2\right)\\
    -C_4\ve^2\int_{t_3}^t \left(\frac{\log(s/M+1)}{(s+M)^2}\right)' F(s)^2\,ds, \quad  t\ge t_3,
\end{multline}
where $\left(\frac{\log(s/M+1)}{(s+M)^2}\right)'\le0$ for $s\ge t_3\ge t_2$.
Since $F''(t)\ge 0$ and $F(0)=0$, the mean value theorem yields
\begin{equation}\label{5.26}
    F(t_3)=\int_0^{t_3}F'(s)ds \le t_3F'(t_3).
\end{equation}
Choose
\begin{equation}\label{5.27}
    t_3=Me^\frac1{2C_4\ve^2}-M,
\end{equation}
which satisfies $C_4\ve^2\log(t_3/M+1)=\frac12$.
Together with \eqref{5.25}-\eqref{5.26}, this yields
\begin{equation}\label{5.28}
    F'(t) \ge \sqrt{C_4}\ve(t+M)^{-1}\log^\frac12(t/M+1)\,F(t), \quad t\ge t_3.
\end{equation}
By integrating \eqref{5.28} from $t_3$ to $t$, we arrive at
\begin{equation*}
    \log\frac{F(t)}{F(t_3)} \ge \sqrt{C_4}\ve\log^\frac32\left(\frac{t+M}
        {t_3+M}\right), \quad t\ge t_3.
\end{equation*}
If $t\ge t_4\defeq Ct_3^2$, then we have
\begin{equation*}
    \log\frac{F(t)}{F(t_3)} \ge 8\log(t+M).
\end{equation*}
Together with \eqref{5.23} for $F(t_3)$, this yields
\begin{equation}\label{5.29}
    F(t) \gt \ve^2(t+M)^8, \quad t\ge t_4.
\end{equation}
Substituting this into \eqref{5.20} derives
\begin{equation*}
    F''(t) \gt \ve F(t)^\frac32, \quad t\ge t_4.
\end{equation*}
Multiplying this differential inequality by $F'(t)$ and integrating from $t_4$ to $t$ yield
\begin{equation*}
    F'(t)^2 \gt \ve\left(F(t)^\frac52-F(t_4)^\frac52\right).
\end{equation*}
On the other hand, $F(t)\ge 0$, $F''(t)\ge 0$, \eqref{5.26} and the mean value theorem imply that, for $t\ge t_4$,
\begin{equation*}
    F(t)=F'(\xi)(t-t_4)+F(t_4) \ge F'(t_4)(t-t_4) \ge F(t_4)\frac{t-t_4}{t_4},
\end{equation*}
where $t_4\le\xi\le t$. For $t\ge t_5\defeq Ct_4$, we have
\begin{equation*}
    F(t)^\frac52-F(t_4)^\frac52 \ge \frac{1}{2}F(t)^\frac52.
\end{equation*}
Thus
\begin{equation}\label{5.30}
    F'(t) \gt \sqrt\ve F(t)^\frac54, \quad t\ge t_5.
\end{equation}
If $T_\ve>2t_5$, then integrating \eqref{5.30} from $t_5$ to $T_\ve$ derives
\begin{equation*}
    F(t_5)^{-\frac14}-F(T_\ve)^{-\frac14} \gt \sqrt\ve T_\ve.
\end{equation*}
We see from \eqref{5.29} and $t_5=Ct_3^2$ that
\begin{equation*}
    F(t_5)\gt \ve^2e^\frac{C}{\ve^2},
\end{equation*}
which together with $F(T_\ve)>0$ is a contradiction. Thus, $T_\ve\le 2t_5=Ct_3^2$.
From the choice of $t_3$ in \eqref{5.27}, we see that $T_\ve\le e^{C/\ve^2}$.


\subsubsection*{Part~\RN{2}: \boldmath $\g>1$ and $\g\not=2$.}

In view of $\bar c=\sqrt{\g A\bar\rho^{\g-1}}=1$, instead of \eqref{5.7} we have
\begin{equation*}
    p-\bar p-(\rho-\bar\rho)=
    A\left(\rho^\g-\bar\rho^\g-\g\bar\rho^{\g-1}(\rho-\bar\rho)\right)
    \defeq A\psi(\rho,\bar\rho).
\end{equation*}
The convexity of $\rho^\g$ for $\g>1$ implies that $\psi(\rho,\bar\rho)$ is positive for $\rho\neq\bar\rho$.
Applying Taylor's theorem, we have
\begin{equation*}
\psi(\rho,\bar\rho) \ge C_{\g,\bar\rho} \,\Phi_\g(\rho,\bar\rho),
\end{equation*}
where $C_{\g,\bar\rho}$ is a positive constant and $\Phi_\g$ is given by
\begin{equation*}
\Phi_\g(\rho,\bar\rho)=
  \begin{cases}
    (\bar\rho-\rho)^\g, & \rho< \frac12\bar\rho,\\
    (\rho-\bar\rho)^2, & \frac12\bar\rho\le\rho\le2\bar\rho,\\
    (\rho-\bar\rho)^\g, & \rho>2\bar\rho.\\
  \end{cases}
\end{equation*}
For $\g>2$, we have that
$(\bar\rho-\rho)^\g=(\bar\rho-\rho)^2(\bar\rho-\rho)^{\g-2}\ge C_{\g,\bar\rho} (\rho-\bar\rho)^2$ for $2\rho<\bar\rho$ and
$(\rho-\bar\rho)^\g=(\rho-\bar\rho)^2(\rho-\bar\rho)^{\g-2}\ge C_{\g,\bar\rho} (\rho-\bar\rho)^2$ for $\rho> 2\bar\rho$.
Thus, $\Phi_\g(\rho,\bar\rho) \ge C_{\g,\bar\rho} (\rho-\bar\rho)^2$.
In this case, Theorem~\ref{thm3} can be shown completely analogously to Part~\RN{1}.

Next we treat the case $1<\g<2$. We define $F(t)$ as in \eqref{5.9}
\begin{equation*}
    F(t)\defeq \int_0^t(t-\tau)\int_{\tau+M_0}^{\tau+M} \int_{x_1>l}
    (x_1-l)^2\left(\rho(\tau,x)-\bar\rho\right)\,dx\frac{dl}{\sqrt l}d\tau.
\end{equation*}
Similarly to Part~\RN{1}, we have
\begin{equation}\label{5.31}
    F''(t)\ge J_1+J_2,
\end{equation}
where
\begin{align*}
    J_1 &\gt \frac{\ve}{t+M},\\
    J_2 &\gt (t+M)^{-2}\t J_3
\end{align*}
and
\begin{equation*}
    \t J_3 =\int_0^t(t-\tau)(\tau+M)^\frac12 \int_{\tau+M_0}^{\tau+M}
    \int_{x_1>y}(x_1-y)^2\,\Phi_\g(\rho(\tau,x), \bar\rho)\,dxdyd\tau.
\end{equation*}
Denote $\Omega_1=\{(\tau,x)\colon \bar\rho\le\rho(\tau,x)\le2\bar\rho\}$,
$\Omega_2=\{(\tau,x)\colon \rho(\tau,x)>2\bar\rho\}$, and
$\Omega_3=\{(\tau,x)\colon \rho(\tau,x)<\bar\rho\}$.
Divide $F(t)$ into the following three integrals over the domains $\Omega_i$ $(1\le i\le 3)$
\begin{equation*}
    F(t)=F_1(t)+F_2(t)+F_3(t)\defeq \int_{\Omega_1}\cdots+\int_{\Omega_2}\cdots+\int_{\Omega_3}\cdots.
\end{equation*}
Corresponding to the three parts of $F(t)$, we define $\t J_3\defeq\t J_{3,1}+\t J_{3,2}+\t J_{3,3}$.
In view of $F(t)\ge0$ and $F_3(t)\le0$, we have
\begin{equation*}
    F(t)\le F_1(t)+F_2(t).
\end{equation*}
Applying H\"{o}lder's inequality for the domains $\Omega_1$ and $\Omega_2$, we obtain that
\begin{align*}
    F(t)&\le \t J_{3,1}^\frac12\left(\int_0^t(t-\tau)(\tau+M)^{-\frac12}
        \int_{\tau+M_0}^{\tau+M}\frac1y\int_{\t\Omega}(x_1-y)^2\,dxdyd\tau\right)^\frac12 \\
    &\quad +\t J_{3,2}^\frac1\g\left(\int_0^t(t-\tau)(\tau+M)^{-\frac1{2(\g-1)}}
        \int_{\tau+M_0}^{\tau+M}\frac{1}{y^\frac{\g}{2(\g-1)}}\int_{\t\Omega}
        (x_1-y)^2\,dxdyd\tau\right)^\frac{\g-1}\g \\
    &\ls \t J_3^\frac12(t+M)^\frac12\log^\frac12 (t/M+1)
        +\t J_3^\frac1\g(t+M)^\frac{\g-1}\g \\
    &=\left(\t J_3(t+M)^{-1}\right)^\frac12(t+M)\log^\frac12 (t/M+1)
        +\left(\t J_3(t+M)^{-1}\right)^\frac1\g(t+M).
\end{align*}
In view of $1<\g<2$, we have $\ds\frac1{2\g}<\frac12<\frac1\g$.
Applying Young's inequality yields
\begin{equation*}
    F(t) \ls \Big(\big(\t J_3(t+M)^{-1}\big)^\frac1{2\g}+
    \big(\t J_3(t+M)^{-1}\big)^\frac{1}{\g}\Big)(t+M)\log^\frac12 (t/M+1),
    \quad t\ge \t t_1\defeq Me.
\end{equation*}
Together with the fact that $F(t)\gt \ve(t+M)\log(t/M+1)$, this yields
\begin{equation*}
    \t J_3 \gt F(t)^\g(t+M)^{1-\g}\log^{-\frac{\g}{2}}(t/M+1), \quad t\ge \t t_1.
\end{equation*}
Substituting this into \eqref{5.31} yields
\begin{align}
    F''(t) &\gt \frac\ve{t+M}, && t\ge0, \label{5.32}\\
    F''(t) &\gt F(t)^\g(t+M)^{-1-\g}\log^{-\frac{\g}{2}}(t/M+1), && t\ge \t t_1. \label{5.33}
\end{align}
Substituting $F(t)\gt \ve(t+M)\log(t/M+1)$ into \eqref{5.33} derives
\begin{equation*}
    F''(t) \gt \ve^\g(t+M)^{-1}\log^\frac{\g}{2}(t/M+1).
\end{equation*}
Integrating this yields
\begin{equation*}
    F(t) \gt \ve^\g(t+M)\log^\frac{\g+2}{2}(t/M+1).
\end{equation*}
Substituting this into \eqref{5.33} again gives
\begin{equation*}
    F''(t) \gt \ve^{\g^2}(t+M)^{-1}\log^\frac{\g(\g+1)}{2}(t/M+1)
    =\ve^{\g^2}(t+M)^{-1}\log^\frac{\g(\g^2-1)}{2(\g-1)}(t/M+1).
\end{equation*}
Repeating this process $k$ times, we see that
\begin{equation}\label{5.34}
    F''(t) \gt \ve^{\g^k}(t+M)^{-1}\log^\frac{\g(\g^k-1)}{2(\g-1)}(t/M+1),
\end{equation}
where $k=\left[\log_\g2\right]$. Solving \eqref{5.34} yields
\begin{equation*}
    F(t) \gt \ve^{\g^k}(t+M)\log^{\frac{\g(\g^k-1)}{2(\g-1)}+1}(t/M+1), \quad t\ge \t t_2,
\end{equation*}
where $\t t_2>0$ is a constant only depending on $\g$ and $M$.
Substituting this into \eqref{5.33} derives
\begin{equation}\label{5.35}
  F''(t) \gt F(t)\ve^{\g^k(\g-1)}(t+M)^{-2}\log^\frac{\g^{k+1}-2}{2}(t/M+1), \quad t\ge \t t_2,
\end{equation}
where $\frac{\g^{k+1}-2}{2}>0$ by the choice of $k=\left[\log_\g2\right]$.
Since \eqref{5.35} is analogous to \eqref{5.24}, as in Part~\RN{1}, we can choose $\t t_3\defeq O\Big(e^{C\ve^{-\frac{2\g^k(\g-1)}{\g^{k+1}-2}}}\Big)$ such that
\begin{equation*}
    F'(t) \gt \ve^\frac{\g^k(\g-1)}2(t+M)^{-1}\log^\frac{\g^{k+1}-2}{4}(t/M+1)\,F(t), \quad t\ge \t t_3,
\end{equation*}
which is similar to \eqref{5.28} and yields
\begin{equation}\label{5.36}
    F(t) \gt \ve^{C_\g}(t+M)^\frac{2(\g+2)}{\g-1}, \quad t\ge \t t_4\defeq C\t t_3^2,
\end{equation}
where $C_{\g}>0$ is a constant depending on $\g$.
Substituting \eqref{5.36} into \eqref{5.33} yields
\begin{equation}\label{5.37}
    F''(t) \gt \ve^{C_\g} F(t)^\frac{\g+1}2, \qquad t\ge \t t_4.
\end{equation}
Multiplying \eqref{5.37} by $F'(t)$ and integrating over the variable $t$ as in Part~\RN{1}, we have
\begin{equation*}
    F'(t) \gt \ve^{C_\g}F(t)^\frac{\g+3}4, \quad t\ge \t t_5\defeq C\t t_4.
\end{equation*}
Together with $\g>1$ and the choice of $\t t_3$, this yields $T_\ve<\infty$.

\smallskip

Collecting Part~\RN{1} and Part~\RN{2} completes the proof of Theorem~\ref{thm3}.
\end{proof}


\appendix
\section{Proof on the lower bound of $P(t,l)$ in $\Sigma\equiv \{(t,l)\colon
t\ge0, t+M_0\le l\le t+M\}$.}\label{appendix}

We fixed a point $A=(t_A,l_A)\in\Sigma$.
In the characteristic coordinates $\xi=1+t-l$ and $\zeta=1+t+l$, \eqref{5.5} can be written as
\begin{equation}\label{A.1}
    \mathscr{L}\bar P\defeq \p_{\xi\zeta}^2\bar P+\frac{2^{\la-2}\mu}{(\xi+\zeta)^\la}
    (\p_\xi\bar P+\p_\zeta\bar P)=\frac{\bar f}4,
\end{equation}
where $\bar P(\xi,\zeta)\defeq P(\frac{\zeta+\xi}2-1,\frac{\zeta-\xi}2)$.
The adjoint operator $\mathscr{L}^*$ of $\mathscr{L}$ has the form
\begin{equation}\label{A.2}
    \mathscr{L}^*\mcR\defeq \p_{\xi\zeta}^2\mcR-\frac{2^{\la-2}\mu}{(\xi+\zeta)^\la}
    (\p_\xi\mcR+\p_\zeta\mcR)+\frac{2^{\la-1}\mu\la}{(\xi+\zeta)^{\la+1}}\mcR.
\end{equation}
For the point $A=(\xi_A,\zeta_A)$ with $\xi_A+\zeta_A=2(1+t_A)\ge2$,
denote $B=(2-\zeta_A,\zeta_A)$, $C=(\xi_A,2-\xi_A)$ and $\mathscr{D}$,
the domain surrounded by the triangle $ABC$ (see Figure 1 below).

Let the numbers $a$ and $b$ satisfy $a+b=1$ and
\begin{equation*}
ab=\left\{
\begin{aligned}
  &\frac{\mu\la}{2}, && \la>1, \\
  &\frac\mu2(1-\frac\mu2), && \la=1.
\end{aligned}
\right.
\end{equation*}
We define
\begin{equation}\label{A.3}
    z\defeq -\frac{(\xi_A-\xi)(\zeta_A-\zeta)}{(\xi_A+\zeta_A)(\xi+\zeta)}
\end{equation}
and
\begin{equation}\label{A.4}
    \mcR(\xi,\zeta;\xi_A,\zeta_A)\defeq \Big[\frac{\Xi(\xi+\zeta-1)}{\Xi
    (\xi_A+\zeta_A-1)}\Big]^{2^{\la-2}} \Psi(a,b,1;z),
\end{equation}
here the definition of function $\Xi$ is given in \eqref{Xi-def} and $\Psi$ is the hypergeometric function.
\begin{figure}[htbp]
\centering\includegraphics[width=9cm,height=6.5cm]{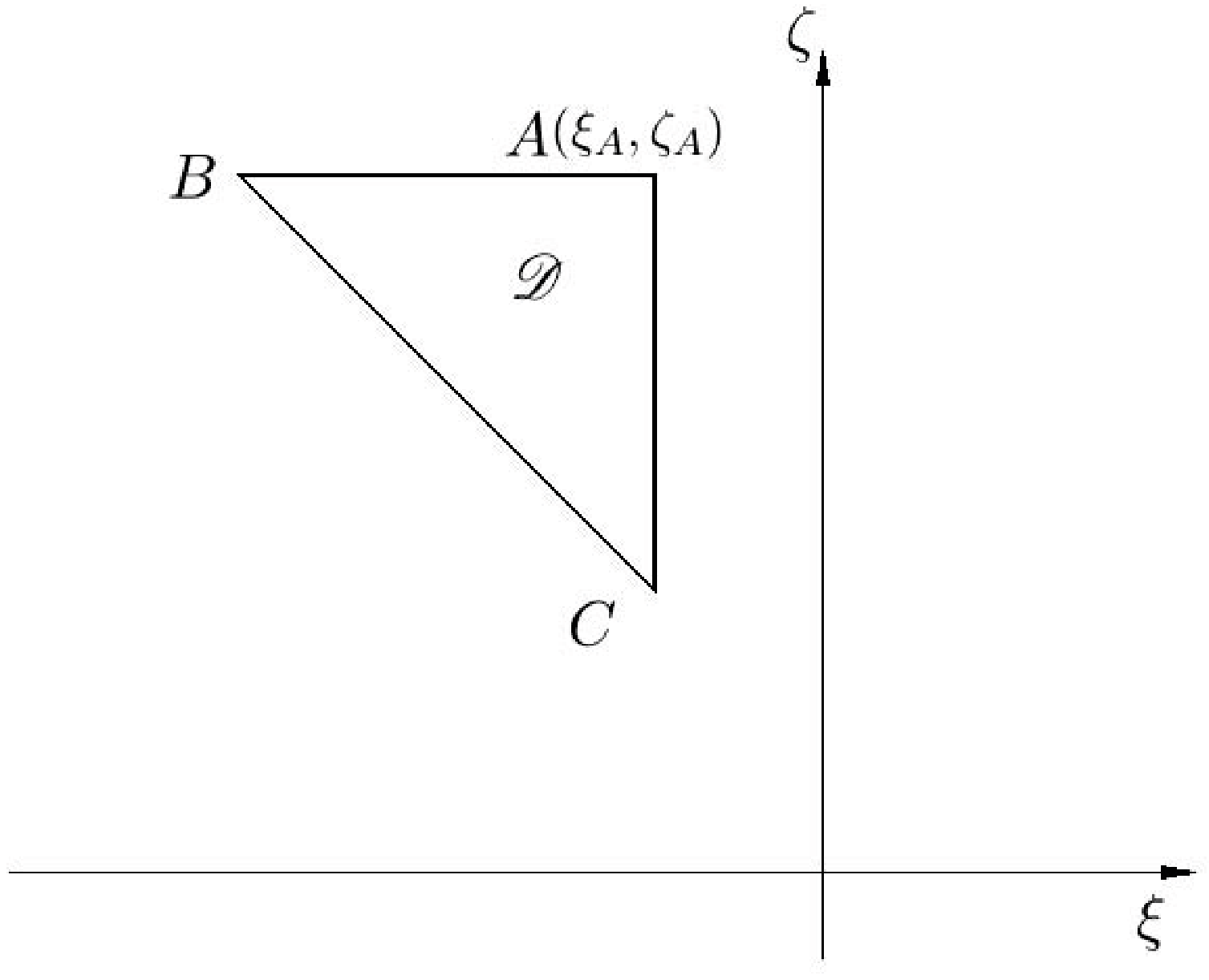}
\caption{\bf $(\xi, \zeta)-$plane}\label{fig:1}
\end{figure}
From this and direct calculation, we infer
\begin{equation}\label{A.5}
    \mathscr{L}^*\mcR=[\frac{2^{\la-2}\mu\la}{(\xi+\zeta)^{\la+1}}-
    \frac{ab}{(\xi+\zeta)^2}-\frac{4^{\la-2}\mu^2}{(\xi+\zeta)^{2\la}}]\mcR.
\end{equation}
On the other hand, from \eqref{A.1}-\eqref{A.2} we arrive at
\begin{equation*}
    \mcR\mathscr{L}\bar P-\bar P\mathscr{L}^*\mcR=\p_\zeta(\mcR\p_\xi\bar P
    +\frac{2^{\la-2}\mu}{(\xi+\zeta)^\la}\mcR\bar P)-\p_\xi(\bar P\p_\zeta\mcR
    -\frac{2^{\la-2}\mu}{(\xi+\zeta)^\la}\mcR\bar P).
\end{equation*}
Integrating this over $\mathscr{D}$ yields
\begin{align}\label{A.6}
    \bar P(A)&=\frac12\mcR(C;A)\bar P(C)+\frac12\mcR(B;A)\bar P(B)+\doubleint_\mathscr{D}
    (\mcR\mathscr{L}\bar P-\bar P\mathscr{L}^*\mcR)\,d\xi d\zeta \no\\
    &+\int_{BC}(\frac12\mcR\p_\xi\bar P-\frac12\bar P\p_\xi\mcR+\frac\mu4\mcR\bar P)\,d\xi
    +(\frac12\bar P\p_\zeta\mcR-\frac12\mcR\p_\zeta\bar P-\frac\mu4\mcR\bar P)\,d\zeta.
\end{align}
Returning to the variable $(t,l)$ (see Figure 2 below), we find in the second line of \eqref{A.6} that
\begin{align}\label{A.7}
    \int_{BC}\cdots=\int_B^C[\frac14\mcR(\p_t-\p_l)P-\frac14P(\p_t-\p_l)
     \mcR+\frac\mu4\mcR P]\,(-dl) \no\\
    +[\frac14P(\p_t+\p_l)\mcR-\frac14\mcR(\p_t+\p_l)P-\frac\mu4\mcR P]\,dl \no\\
    =\int_{l_A-t_A}^{l_A+t_A}\left.[\frac\mu2\mcR P+\frac12\mcR\p_tP
     -\frac12P\p_t\mcR]\right|_{t=0}dl \no\\
    =\int_{l_A-t_A}^{l_A+t_A} \Xi(t_A)^{-\frac12} \Big[\Psi(a,b,1;z|_{t=0})
     \Big(\frac\mu4q_0(l)+\frac12q_1(l)\Big) \no\\
    -\frac{ab}{2}\Psi(a+1,b+1,2;z|_{t=0})q_0(l)z_t|_{t=0}\Big]dl,
\end{align}
\begin{figure}[htbp]
\centering\includegraphics[width=8.5cm,height=6.5cm]{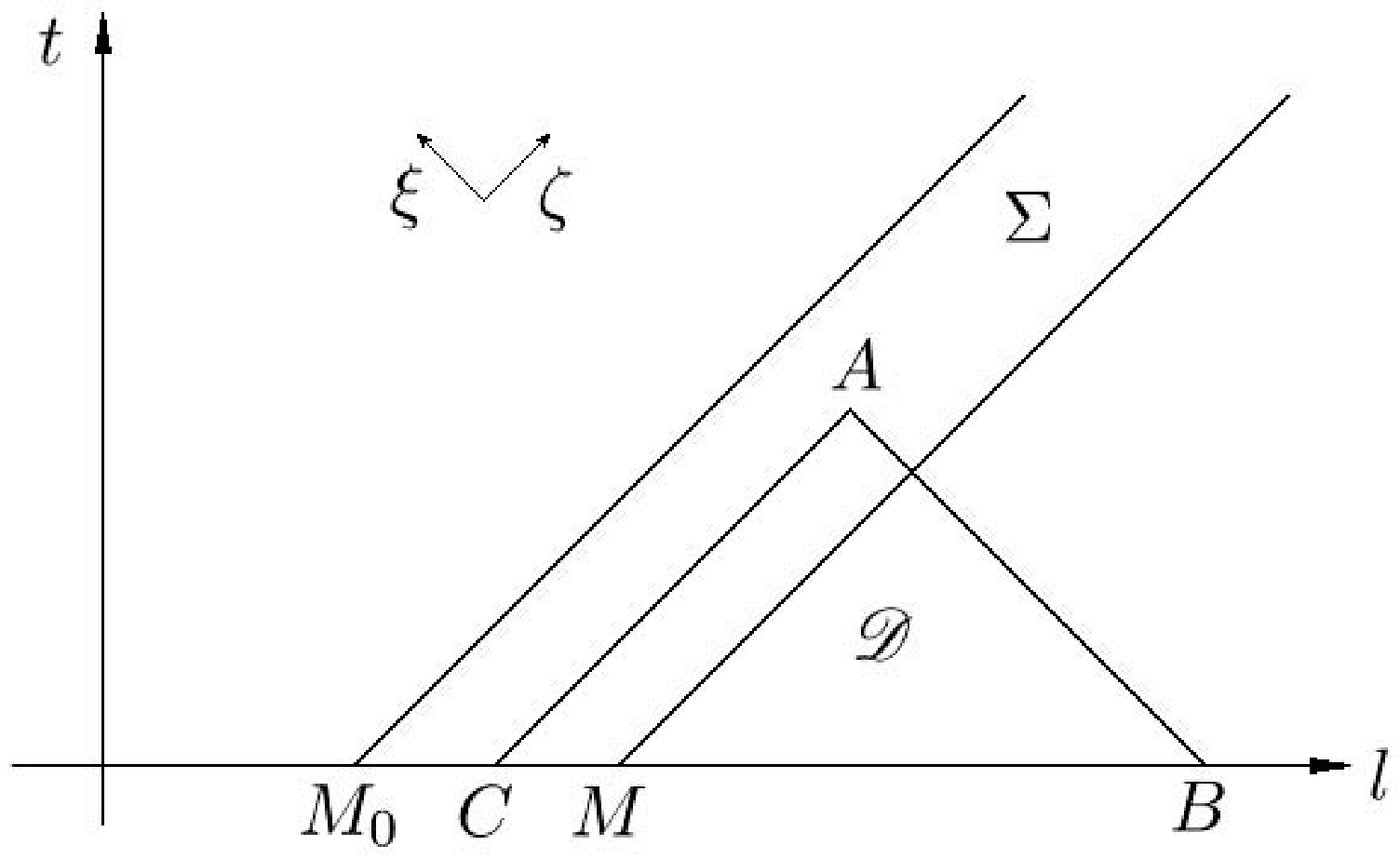}
\caption{\bf $(t, l)-$plane}\label{fig:2}
\end{figure}
where we have used the formula $\Psi'(a,b,c;z)=\frac{ab}{c}\Psi(a+1,b+1,c+1;z)$ (see page 58 of \cite{EMOT}).
From the definition \eqref{A.3}, we arrive at
\begin{equation*}
    z=-\frac{(t_A-l_A-t+l)(t_A+l_A-t-l)}{4(1+t_A)(1+t)}
\end{equation*}
and
\begin{equation}\label{A.8}
    z_t|_{t=0}=\frac{t_A}{2(1+t_A)}-z|_{t=0}.
\end{equation}
If $(t, l)\in\Sigma\cap\overline{\mathscr{D}}$, we infer
\begin{equation}\label{A.9}
0\ge z \ge -\frac12(M-M_0)\ge -\frac12\delta_0,
\end{equation}
which implies that \eqref{Psi-bound} holds.
This, together with \eqref{q0-positive}, \eqref{A.7}-\eqref{A.9} and the assumption \eqref{+condition} of $\Lambda \ge 3ab$,
yields that the integral in the second line of \eqref{A.6} is non-negative.
Next we prove that $P(t,l)\ge0$ for all $(t, l)\in\Sigma$. Define
\begin{equation*}
    \bar t\equiv\inf \{t\colon \exists~l\in(t+M_0,t+M)~s.t.~P(t,l)<0\}.
\end{equation*}
From assumption \eqref{q0-positive}, we get $\bar t>0$.
If $\bar t<+\infty$, we see that there exists $\bar l\in(\bar t+M_0,\bar t+M)$ such that $P(\bar t,\bar l)=0$.
Moreover, we have $P(t,l)\ge0$ for $t<\bar t$.
Choose $A=(t_A,l_A)=(\bar t,\bar l)$ in \eqref{A.6}.
From \eqref{A.4}-\eqref{A.5} and \eqref{Psi-bound} we infer $\mathscr{L}^*\mcR\le0$ for $\la\ge1$ and $(t,l)\in\Sigma\cap\mathscr{D}$ ($\mathscr{L}^*\mcR\equiv0$ if $\la=1$).
It follows from $f(t,l)\ge0$ in \eqref{5.5}, \eqref{Psi-bound}-\eqref{q1-positive} and \eqref{A.6} that
\begin{align*}
    P(\bar t,\bar l)\ge \frac12\mcR(C;A)P(0,\bar l-\bar t)+\doubleint_{\Sigma\cap\mathscr{D}}
    (\mcR\mathscr{L}\bar P-\bar P\mathscr{L}^*\mcR)\,d\xi d\zeta \ge \frac14\Xi(\bar t)^{-\frac12}q_0(\bar l-\bar t)>0,
\end{align*}
which is a contradiction with $P(\bar t,\bar l)=0$.
Consequently, we conclude that $\bar t=+\infty$ and $P(t,l)\ge0$ for all $(t, l)\in\Sigma$.
It follows from \eqref{Psi-bound}-\eqref{q1-positive}, \eqref{A.4}, \eqref{A.6}, $P(t,l)\ge0$ and $\mathscr{L}^*\mcR\le0$ that
\begin{equation*}
    P(t_A,l_A)\ge \frac14\Xi(t_A)^{-\frac12}q_0(l_A-t_A)+\frac14\int_0^{t_A}\int_{l_A-t_A+\tau}^{l_A+t_A-\tau} \left(\frac{\Xi(\tau)}{\Xi(t_A)}\right)^\frac12 f(\tau,y)\,dyd\tau,
\end{equation*}
which is \eqref{5.8}.

\medskip

\begin{acknowledgement}
Yin Huicheng wishes to express his gratitude to Professor Ingo Witt,
 University of G\"ottingen, and Professor Michael
Reissig, Technical University Bergakademie Freiberg, for their
interests in this problem and some very fruitful discussions in the past.
\end{acknowledgement}



\end{document}